\documentclass[hidelinks,onefignum,onetabnum]{siamart251216}

\usepackage{lipsum}
\usepackage{amsfonts}
\usepackage{amssymb}
\usepackage{amsmath}
\usepackage{graphicx}
\usepackage{epstopdf}
\usepackage{algorithm}
\usepackage{algpseudocode}
\usepackage{subfig}

\ifpdf
\DeclareGraphicsExtensions{.eps,.pdf,.png,.jpg}
\else
\DeclareGraphicsExtensions{.eps}
\fi

\newsiamremark{remark}{Remark}
\newsiamremark{hypothesis}{Hypothesis}
\crefname{hypothesis}{Hypothesis}{Hypotheses}
\newsiamthm{claim}{Claim}
\newsiamremark{fact}{Fact}
\crefname{fact}{Fact}{Facts}

\renewcommand{\O}{\Omega}

\def\S{\mathrm{S}}

\newcommand\R{\mathbb{R}}

\newcommand{\x}{{\mathbf{x}}}

\DeclareSymbolFont{rsfs}{U}{rsfs}{m}{n}
\DeclareSymbolFontAlphabet{\mathscrsfs}{rsfs}
\newcommand{\mscr}[1]{\mathscrsfs{#1}}

\def\H{{X}}

\def\0{\boldsymbol{0}}
\def\div{{\rm div} \:}

\def\y{{y}}

\def\vh{{v}_h}

\def\e{{e}}
\def\z{{z}}

\def\ey{e^{{y}}}
\def\ez{e^{{z}}}

\def\v{{v}}
\def\u{{u}}

\def\f{{f}}

\def\n{{\boldsymbol{n}}}

\def\a{{\boldsymbol{{a}}}}
\def\w{{w}}

\def\U{{U}}

\def\CF{\mathcal{F}}

\def\EVT{\mathcal{E}_{V,T}}
\def\EVTm{\mscr{E}_{V,T}}
\def\EUT{\mathcal{E}_{U,T}}

\def\uliney{\hat{{y}}}
\def\ulineyh{\hat{{y}}_{h}}
\def\ulinez{\hat{{z}}}
\def\ulinev{\hat{{v}}}
\def\ulinevh{\hat{{v}}_{h}}
\def\ulinew{\hat{{w}}}

\def\ulineu{\hat{u}}
\def\ulineuh{\hat{{u}}_h}
\def\ulinee{\hat{e}}
\def\ulineeh{\hat{{e}}_{h}}

\def\CMh{\mathcal{M}_{h}}
\def\ACTh{\tilde{\mathcal{T}}_{h}}
\def\CTh{\mathcal{T}_{{h}}}
\def\CFh{\mathcal{F}_{{h}}}

\def\IVTk{\hat{\mathcal{I}}_{V,T}^k}
\def\IUTk{\hat{\mathcal{I}}_{U,T}^k}
\def\IVTkm{\hat{\mscr{I}}_{V,T}^k}

\def\RT{\mathcal{R}_T}
\def\GT{\mathcal{G}_T}
\def\RTc{{\mathcal{R}_T^c}}

\def\IVhk{\hat{\mathcal{I}}_{V,h}^k}
\def\IUhk{\hat{\mathcal{I}}_{U,h}^k}
\def\IVhkm{\hat{\mscr{I}}_{V,h}^k}

\def\Lhl{\mathcal{L}_h^l}

\def\LTki{\mathcal{L}_T^{k+1}}
\def\LTl{\mathcal{L}_T^l}
\def\LTkpt{\mathcal{L}_T^{k+2}}
\def\Lhkpt{\mathcal{L}_h^{k+2}}
\def\pT{\partial T}

\def\dn{\partial_{n}}
\def\dnn{\partial_{{nn}}}
\def\dnt{\partial_{{nt}}}
\def\dt{\partial_{{t}}}
\def\dtt{\partial_{{tt}}}

\def\PiFk{\Pi_F^k}
\def\PiTkpt{\Pi_T^{k+2}}

\def\res{\mathrm{res}}
\def\sta{\mathrm{sta}}
\def\nor{\mathrm{nor}}
\def\tan{\mathrm{tan}}

\renewcommand{\P}{{\mathbb P}}  

\def\V{V}
\def\A{\mathcal{A}}

\def\LT{L^2(\Omega)}
\def\Linf{L^{\infty}(\Omega)}
\def\L{\boldsymbol{L}}

\def\adel{{a^{\Delta}}}
\def\anab{{a^{\nabla}}}
\def\VTk{{\hat{{V}}}_T^k}
\def\UTk{{\hat{{U}}_T^k}}
\def\Vhk{{\hat{{V}}}_{{h}}^k}
\def\Uhk{{\hat{{U}}}_{{h}}^k}
\def\Vhok{{\hat{{V}}}_{{h},0}^k}
\def\Zhok{\hat{{X}}_{h,0}^k}
\def\Uhok{{\hat{{U}}}_{{h},0}^k}
\def\Usfhok{{U}_{h,0}^k}

\headers{HHO methods for sixth-order problems}{A. Khan, A. Kumar and H. Singh}

\title{
	Mixed Hybrid High-Order Methods for Sixth-Order Problems: Fully Non-conforming and $C^0$-Conforming Discretizations\thanks{Submitted to the editors DATE.
		\funding{A. Khan was supported by ANRF ARG-Matrics
			ANRF/ARGM/2025/001949/MTR. A. Kumar and H. Singh were financially supported by CSIR India, with Award no. 09/0143(21346)/2025-EMR-I and 09/0143(16048)/2022-EMR-I, respectively. }
	}
}

\author{ 
	Arbaz Khan\thanks{Department of Mathematics, IIT Roorkee, India.
		E-mails: arbaz@ma.iitr.ac.in (A. Khan), ajay\_k@ma.iitr.ac.in (A. Kumar), harpal\_s@ma.iitr.ac.in (H. Singh).}
	\and
	Ajay Kumar\footnotemark[2]
	\and 
	Harpal Singh\footnotemark[2]
}
\ifpdf
\hypersetup{pdftitle={New \LaTeX\ Style} ,pdfauthor={A. Khan, A. Kumar and H. Singh}}
\fi	

\usepackage{amsopn}

\ifpdf
\hypersetup{
	pdftitle={An Example Article},
	pdfauthor={}
}
\fi

\begin{document}
	
	\maketitle
	
	\begin{abstract}
We consider a class of sixth-order elliptic partial differential equations in two and three dimensions subject to simply supported and Cahn--Hilliard-type boundary conditions. Based on the Ciarlet--Raviart reformulation, we recast the sixth-order problem as an equivalent mixed system involving second- and fourth-order equations and establish its well-posedness under suitable assumptions. For the resulting mixed formulation, we propose and analyse two discretization frameworks: a fully non-conforming hybrid high-order (HHO) method on general polytopal meshes and a $C^0$-conforming HHO--finite element method on simplicial meshes. We prove stability and derive optimal-order a priori error estimates for the primary variables under appropriate regularity assumptions. For the fully non-conforming HHO method, we further derive a reliable residual-based a posteriori error estimator. Numerical experiments confirm the theoretical convergence rates and illustrate the robustness of the proposed methods for a range of mobility parameters.
	\end{abstract}
	\begin{keywords}
		Sixth-order partial differential equations, Ciarlet-Raviart formulation, hybrid high-order methods, a priori error estimates
	\end{keywords}
	\begin{AMS}
		35J58, 65M60, 65N15, 65N30
	\end{AMS}
	\section{Introduction}
	Partial differential equations (PDEs) are widely used to describe various phenomena in science and engineering. Higher-order PDEs, in particular, arise in the modelling of complex physical processes that involve higher-order spatial derivatives, such as fourth- and sixth-order terms. Such equations are encountered in applications including thin film dynamics \cite{MR2028722, MR3327707}, fluid flows \cite{MR2667016}, geometric and surface modelling \cite{liu2007general}, phase field crystal modelling \cite{MR3003062}, and fracture modelling \cite{MR2807583}. In recent years, a wide range of numerical methods have been developed to analyze such systems, with several notable contributions closely aligned with the present study \cite{ MR2028722, MR4496380, dassi2022virtual, MR4842145, MR4604440, MR4982456, MR5047454, MR3564350, MR2519603, MR2361532}.
	
	For a given source function $\f\in\LT$, the sixth-order problem is to find the unknown $\y: \Omega \rightarrow \R$ such that
	\begin{align}
		\label{modelproblem}
		\y - \div\!(\kappa \nabla \Delta^2 \y) &= \f \qquad \text{in} \ \O,
	\end{align}
	with the \textit{simply supported boundary conditions}:
		\begin{align}
			\label{boundcndn1} 
			\y = \Delta\y = \Delta^2\y &=0 \qquad \text{on} \ \partial\Omega.
		\end{align}
		or the \textit{Cahn-Hilliard boundary conditions}:
		\begin{align}
			\label{boundcndn2} 
			\partial_{n} \y = \partial_{n} \Delta \y = \partial_{n} \Delta^2 \y &=0 \qquad \text{on} \ \partial\Omega.
		\end{align}
		
	In \eqref{modelproblem}, $\kappa>0$ is a positive physical parameter which represents the mobility of the concentration field in phase field crystal equations and micro-emulsion systems and $\text{div}$ represents the usual divergence operator.

	Classical finite element methods (FEMs) face significant challenges in discretizing higher-order PDEs because they require discrete spaces with high global continuity. One alternative is to use non-conforming discretizations, as in \cite{MR1930132}, but this lack of conformity introduces technical difficulties in error analysis. A simpler and well established approach is to use mixed schemes, which reformulate the original problem as a coupled system and reduce the regularity requirements. The authors in \cite{MR2028722, MR4974657} introduced two additional unknowns and the sixth-order problem was decomposed into three coupled second-order problems where the work in \cite{MR4974657} is restricted to polygonal domains with the largest interior angle no more than $\pi/2$. However, this approach introduces several challenges, including higher computational costs compared to the primal formulation and difficulties in developing stable mixed schemes. The authors in \cite{MR4570555, MR4604440} developed an unconditional energy stable and solvable $C^0$ interior penalty method for the phase field crystal equation. Recently, a $C^{1}$-$C^{0}$-conforming virtual element method (VEM) which allows general polygonal meshes, was proposed in \cite{MR4982456} for the sixth-order problems with clamped and simply supported boundary conditions. The authors introduced an additional unknown using a Ciarlet-Raviart formulation and reformulated the original problem as a coupled system of a fourth-order and a second-order problem. Optimal order a priori error estimates were derived for the main and auxiliary variable under standard mesh assumptions.
	
	The hybrid high-order (HHO) methods were firstly introduced in \cite{ MR3259024} for diffusion problem and a locking-free version in \cite{MR3283758} for the linear elasticity problem. HHO methods use local reconstruction and a local stabilization operator on each mesh cell to approximate the bilinear form. The first operator reconstructs differential operators using cell and face unknowns motivated by an integration by part formula while second operator penalizes the trace of cell and face unknowns on each mesh interface. HHO methods allow static condensation, which significantly reduces computational cost by eliminating local degrees of freedom. Owing to their flexibility and high-order accuracy, these methods naturally support general polygonal (and polyhedral) meshes, making them well suited for complex geometries. As a result, HHO methods have been successfully extended to a wide range of problems with some notable contributions including \cite{MR3504993, MR4439881, MR3784906, MR3767823, MR4621059}, \cite{MR4779761, MR3834430, MR4355427, MR4485999, MR4699573} for fourth-order problems and \cite{MR4630545, MR4891729, dong2025hperroranalysismixedorderhybrid, dong2026hp, MR4621059} focus on a posteriori error analysis.
	
	 In \cite{MR4355427}, the authors developed an HHO method for singularly perturbed fourth-order problems. The method employs a Nitsche-type penalty approach to impose boundary conditions weakly, and incorporates a carefully scaled stabilization parameter that balances the singular perturbation parameter with the square of the local mesh size. Later, in \cite{MR4485999} two different HHO methods were devised for biharmonic problem using hybrid spaces with the cell unknowns approximated by polynomials of order $k + 2$, the face unknowns approximating the trace of the solution on the mesh interfaces are polynomials of order $k+1$ for the first HHO method, and order (k + 2) for the second HHO method, and face unknowns approximating the trace of normal derivative are of order $k$. Recently, a $C^0$-conforming HHO method for the biharmonic problem accommodating both simply supported and clamped boundary conditions, was introduced in \cite{MR4699573}. This approach combines the advantages of conformity with the flexibility of HHO frameworks. In addition, a posteriori error estimates for the biharmonic problem have been developed in \cite{dong2026hp}, providing reliable tool for error control and adaptive mesh refinement. While the HHO method offers several advantages and has been successfully applied to a variety of problems, its application to sixth-order problems is still an open and largely unexplored area.
	
	In the present work, we focus on the development and analysis of two distinct HHO schemes for sixth-order problems. We use the Ciarlet-Raviart formulation inspired from \cite{MR1930132} and introduce a new auxiliary unknown to reduce the sixth-order problem to a coupled fourth-order and a second-order problem. The coupled problem is shown to be well-posed by an application of Lax-Milgram lemma. For the numerical approximation, we consider HHO(B) method devised in \cite{MR4485999} alongside a $C^0$-conforming HHO method introduced in \cite{MR4699573}. The main novelties and contributions of this work can be summarized as follows.
	\begin{itemize}
		\item \textit{Two novel discretization frameworks for sixth-order problems:}
We propose two new and fundamentally different discretization frameworks for the numerical approximation of sixth-order elliptic problems. The first is based on a fully non-conforming HHO formulation for both the fourth- and second-order components of the mixed problem. The second combines a $C^0$-conforming HHO method for the fourth-order component with a standard conforming finite element method for the second-order component. These constructions employ carefully designed mixed-order hybrid spaces, allowing the proposed methods to achieve high-order accuracy while reducing the number of globally coupled degrees of freedom. In particular, the $C^0$-conforming HHO--FEM formulation provides an efficient and implementation-friendly alternative to a fully non-conforming discretization of the sixth-order problem.
		\item \textit{A priori and a posteriori error analysis:} 
		We establish optimal $O(h^{k+1})$ a priori convergence rates for both proposed schemes in the corresponding energy norms. For the fully non-conforming HHO method, we additionally derive a reliable residual-based a posteriori error estimator by separating conforming and non-conforming error components and employing suitable modified Babu\v{s}ka--Suri and Karkulik--Melenk interpolation operators. The resulting framework provides an efficient and computable error control mechanism.
\end{itemize}
	\vspace{3mm}
	
	The remainder of the paper is organized as follows: Sec.~\ref{Continuous settings and well-posedness} introduces the notations, functional framework and establishes the well-posedness of model problem. Secs.~\ref{HHO Scheme} and~\ref{C0 HHO Scheme} present two HHO schemes along with the a priori error estimates. Sec.~\ref{A posteriori error estimates} is dedicated to the development of a posteriori error estimates. Finally, Sec.~\ref{Numerical experiments} provides some numerical results that confirm the theoretical findings.
	\section{Continuous settings and well-posedness}\label{Continuous settings and well-posedness}
	In this section, we introduce the notations and function spaces and then discuss the well-posedness results.
	\subsection{Preliminaries}\label{Preliminaries}
	Let $\Omega \subset \mathbb{R}^d$ $(d =2,3)$ be an open and bounded convex domain with polygonal boundary $ \partial\Omega$. For any open and bounded set $\mathcal{S} \subset \mathbb{R}^d$ or $\mathcal{S} \subset \mathbb{R}^{d-1}$, let $W^{r,p}(\mathcal{S})$ denotes usual Sobolev spaces with corresponding norms $\|\cdot\|_{r,p,\mathcal{S}}$ and semi-norms $|\cdot|_{r,p,\mathcal{S}}$
	where $0\le r< \infty$ and $1\le p \le \infty$. For $p=2$, we define $W^{r,2}(\mathcal{S}):= H^{r}(\mathcal{S})$ with norm $\|\cdot\|_{H^{r}(\mathcal{S})}$ and semi-norm $|\cdot|_{H^{r}(\mathcal{S})}$. If $r = 0$, $H^{0}(\mathcal{S}):= L^{2}(\mathcal{S})$ with standard $L^2$ inner-product and norm defined as:
	\begin{align*}
		(\v_1,\v_2)_{\mathcal{S}} := \int_{\mathcal{S}} \v_1 \v_2 \, dx , \qquad \text{and} \qquad \|\v\|_{\mathcal{S}}^{2} := (\v,\v)_{\mathcal{S}},
	\end{align*}
	respectively. To simplify the notations, we define 
	\[\V :=H_0^1(\O)\cap H^2(\O) \qquad \text{and} \qquad
	\U := H_0^1(\O),
	\]
	with corresponding norms 
	\[
	\|\cdot\|_{\V} := |\cdot|_{H^2(\O)} \qquad \text{and} \qquad
	\|\cdot\|_{\U} := |\cdot|_{H^1(\Omega)},
	\]
	 respectively. {By $\dn$, $\dt$, $\dnn$, $\dnt$ and $\dtt$, we denote the scalar-valued normal derivative, the $\R^d$-valued tangential derivative, the scalar-valued normal-normal second-order derivative, $\R^d$-valued normal-tangential second-order derivative  and the $\R^{d\times d}$ valued tangential-tangential second-order derivative, respectively.} We drop the domain in subscript of norm and inner-product if it is $\Omega$ itself. The relation $m\lesssim n$ states that $m\leq Cn$ with a positive constant $C$ independent of $m$, $n$, or the mesh size. 
	\subsection{Mixed formulation}
	Firstly, we derive a variational formulation for the mixed version of \eqref{modelproblem}-\eqref{boundcndn1} by introducing a new unknown $\z := \Delta^2 \y$ using the Ciarlet-Raviart method (see \cite[Sec.~7.1]{MR1930132}).

	  Then, \eqref{modelproblem} along with the boundary conditions \eqref{boundcndn1} yield the following system:
	  \begin{subequations}
	\begin{align}
		\label{4thorder} 
		\Delta^2 \y - \z&=0 && \text{in} \ \O,&&
		 \y - \div\!(\kappa \nabla \z)= \f && \text{in} \ \O,\\
		\label{SSBC}	
		\y = \dnn\y &= 0 && \text{on} \ \partial \O,&&
			\z =0 && \text{on} \ \partial \O.
	\end{align}
	\end{subequations}
%

	We multiply the fourth order problem with a test function $\v \in \V$ and the second order problem by $\u \in \U$ and then use integration by parts to obtain the variational formulation: find $(\y,\z)\in \V\times\U$ such that
	\begin{subequations}\label{wf12}
	\begin{align}
		\label{WF1}
		\adel(\y,\v) - (\z,\v) &=0 &&\hspace{-1cm} \forall \v \in \V, \\
		\label{WF2} (\y,\u) + \anab(\z,\u) &= (\f,\u) &&\hspace{-1cm} \forall \u \in \U,
	\end{align}
	\end{subequations}
	where the bilinear forms $\adel: \V \times \V \rightarrow \R$ and $\anab: \U \times \U \rightarrow \R$ are defined as:
	\begin{align}\label{bilnear_forms}
		\adel(\y,\v) &:= (\nabla^2\y,\nabla^2\v),\qquad \qquad
		\anab(\z,\u) := (\kappa \nabla \z,\nabla \u).
	\end{align}
	\subsection{Well-posedness}\label{Well-posedness}
	{{To discuss the well-posedness of the mixed formulation \eqref{wf12}, consider the product space $\H := \V\times\U$ which is equipped with the norm 
			\begin{align*}
				\|(\v,\u)\|_{\H}^{2} := \left(\|\v\|_{\V}^{2} +\|\u\|_{\U}^2 \right).
			\end{align*}
		
		 Consider the variational problem: find $(\y,\z) \in \H$ such that
		\begin{align}
			\label{sum} \A((\y,\z),(\v,\u))&=(\f,\u) \qquad \forall (\v,\u)\in \H,
		\end{align}
		where the bilinear form $\A((\cdot,\cdot),(\cdot,\cdot)): \H \times \H \rightarrow \R$ is defined as:
		\begin{equation}\label{cal_bil_a}
			\A((\y,\z),(\v,\u)) := \adel(\y,\v) - (\z,\v) + (\y,\u) + \anab(\z,\u) \qquad \forall (\y,\z), (\v,\u) \in \H.
		\end{equation}}}
		\begin{lemma}\label{Lemma 2.1}
			(i) Let $\adel(\cdot,\cdot)$ and $\anab(\cdot,\cdot)$ be the bilinear forms defined in \eqref{bilnear_forms}. Then, the following estimates hold true:
			\begin{subequations}
			\begin{align}
			\label{cont1}	|\adel(\y,\v)| &\lesssim \|\y\|_{\V} \|\v\|_{\V}, &&  \|\v\|_{\V}^{2} =\adel(\v,\v) ,\\
			\label{cont2}	|\anab(\z,\u)| &\lesssim \|\z\|_{\U} \|\u\|_{\U}, &&   \|\u\|_{\U}^{2} \lesssim \anab(\u,\u) ,
			\end{align}
			\end{subequations}
			
			(ii) The bilinear form $\A((\cdot,\cdot),(\cdot,\cdot))$ satisfies the following properties:
			\begin{subequations}
			\begin{align}
			\label{a_cont}	|\A((\y, \z), (\v, \u))| &\lesssim \|(\y, \z)\|_{\H} \|(\v, \u)\|_{\H} &&\forall (\y,\z), (\v, \u) \in \H,\\
			\label{a_coerc}	 \|(\v, \u)\|_{\H}^{2}&\lesssim \A((\v, \u), (\v, \u))  && \forall (\v, \u) \in \H.
			\end{align}
			\end{subequations}
		\end{lemma}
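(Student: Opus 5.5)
The plan is to verify each estimate directly from the definitions in \eqref{bilnear_forms} and \eqref{cal_bil_a}, using only the Cauchy--Schwarz inequality, the Poincar\'e inequality on $H_0^1(\O)$, and the positivity and boundedness of $\kappa$. I treat $\kappa$ as a positive constant, or more generally $\kappa\in\Linf$ with $\kappa\ge\kappa_0>0$ a.e.

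\textbf{Part (i).} For $\adel$, continuity in \eqref{cont1} is Cauchy--Schwarz: $|(\nabla^2\y,\nabla^2\v)|\le\|\nabla^2\y\|\,\|\nabla^2\v\| = |\y|_{H^2(\O)}|\v|_{H^2(\O)}$. The identity $\adel(\v,\v)=\|\v\|_{\V}^2$ holds by definition. For $\anab$, \eqref{cont2} follows similarly, with constants $\|\kappa\|_{L^\infty(\O)}$ above and $\kappa_0$ below.

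The one point needing care in (i) is that $\|\cdot\|_{\V}=|\cdot|_{H^2(\O)}$ is a genuine norm on $\V$ that controls lower-order norms; this is used in part (ii). For $\v\in\V$, integrate by parts (allowed since $\v\in H_0^1(\O)\cap H^2(\O)$) and apply Poincar\'e:
\[
|\v|_{H^1(\O)}^2 = -(\Delta\v,\v)\le \|\Delta \v\|\,\|\v\| \le \sqrt{d}\,|\v|_{H^2(\O)}\,C_P|\v|_{H^1(\O)} .
\]
This gives $|\v|_{H^1(\O)}\lesssim|\v|_{H^2(\O)}$, and hence $\|\v\|\lesssim|\v|_{H^1(\O)}\lesssim\|\v\|_{\V}$. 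No convexity or $H^2$-regularity argument is needed for this.

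\textbf{Part (ii), continuity \eqref{a_cont}.} I bound the four terms of \eqref{cal_bil_a} separately. The terms $\adel$ and $\anab$ are handled by part (i). The coupling terms are bounded by
\[
|(\z,\v)|\le\|\z\|\,\|\v\|\lesssim\|\z\|_{\U}\|\v\|_{\V}, \qquad |(\y,\u)|\lesssim\|\y\|_{\V}\|\u\|_{\U},
\]
using Poincar\'e on $\U$ and the bound $\|\v\|\lesssim\|\v\|_{\V}$ just derived. Summing the four bounds and applying the discrete Cauchy--Schwarz inequality gives \eqref{a_cont}.

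\textbf{Part (ii), coercivity \eqref{a_coerc}.} This rests on a structural observation rather than an estimate. Taking $(\y,\z)=(\v,\u)$, the two coupling terms cancel exactly, since $-(\u,\v)+(\v,\u)=0$. Therefore
\[
\A((\v,\u),(\v,\u))=\adel(\v,\v)+\anab(\u,\u)\ \ge\ \|\v\|_{\V}^2+\kappa_0\|\u\|_{\U}^2\ \gtrsim\ \|(\v,\u)\|_{\H}^2 .
\]
This is the reason the second equation was written with the sign convention $+(\y,\u)$. The only mildly delicate step in the whole lemma is the norm equivalence on $\V$ used for continuity, and it is handled by the integration-by-parts argument in part (i).
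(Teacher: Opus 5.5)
Your proposal is correct and follows the same route as the paper. Continuity comes from Cauchy--Schwarz plus Poincar\'e-type bounds, and coercivity comes from the exact cancellation $-(u,v)+(v,u)=0$ of the coupling terms. Your integration-by-parts bound $|v|_{H^1(\Omega)}\lesssim|v|_{H^2(\Omega)}$ on $V$ spells out the ``generalized Poincar\'e inequality'' that the paper cites without detail, and it is indeed needed to control the coupling terms in \eqref{a_cont}.
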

		\begin{proof}
			(i) The estimates \eqref{cont1} and \eqref{cont2} are obtained by an application of the Cauchy-Schwarz inequality and the generalized Poincar\'e inequality.
			
			(ii) The estimate \eqref{a_cont} directly follows by Cauchy-Schwarz inequality and the estimates \eqref{cont1} and \eqref{cont2}. Next, for all $(\v,\u) \in \H$, we have
			\begin{align*}
				\|(\v,\u)\|_{\H}^{2} \lesssim \adel(\v,\v)+ \anab(\u,\u) &= \adel(\v,\v) - (\u,\v) + (\v,\u) + \anab(\u,\u)\\
				& = \A((\v, \u), (\v, \u)).
			\end{align*}
		\end{proof}
		
		By an application of Lax-Milgram lemma along with the coercivity property \eqref{a_coerc}, there exists a unique solution $(\y,\z) \in \H$ of \eqref{sum} which satisfies the following stability estimate:
		\[
		 \|\y\|_{\V} + \|\z\|_{\U} \lesssim \|\f\|.
		\]

		Furthermore, by the regularity results discussed in \cite[Theorem~2]{MR4982456}, $(\y,\z) \in (\V \cap H^{3}(\O)) \times (\U \cap H^{2}(\O))$ and
		\[
		 {\|\y\|_{H^3} + \|\z\|_{H^2}} \lesssim \|\f\|.
		\]
	\begin{remark}
		In the case of Cahn-Hilliard type boundary conditions \eqref{boundcndn2}, the mixed variational formulation is: find $(y,z) \in \tilde{\H} := \tilde{\V} \times \tilde{\U}$ such that
		\begin{align}\label{CHsum}
			\A((\y,\z),(\v,\u))&=(\f,\u) \qquad \forall (\v,\u)\in \tilde{\H}.
		\end{align}
		where
		\[
		\tilde{\V}:= \{\v\in H^2(\O)|\, (\v,1)=0,\ \dn\v=0\ \text{on}\  \partial\O\}, \qquad 
		\tilde{\U}:= \{\u\in H^1(\O)|\, (\u,1)=0\},
		\]
		and $\tilde{\H}$ is equipped with norm $\|(\v,\u)\|_{\tilde{\H}}^{2} := \left(\|\nabla^2\v\|^2 + \|\nabla\u\|^2 \right)$. The well-posedness of \eqref{CHsum} is established by employing arguments analogous to those used in proving the solvability of \eqref{sum}.
		\end{remark}
	\begin{remark}
		In the model problem \eqref{modelproblem}, we may consider the mobility parameter $\kappa$ as a positive and bounded scalar function which belongs to $\Linf$ such that
		\begin{align*}
			0<\kappa_{1} \le \kappa(\x) \le \kappa_{2} \qquad \forall \x \in \O.
		\end{align*}
		
		To simplify the presentation, we assume that $\kappa$ is a positive constant throughout the domain $\Omega$. {Nevertheless, all the results presented in this work can be extended to the case of a spatially varying piecewise constant mobility parameter $\kappa$ satisfying the above bounds, with only slight modifications in the present analysis}.
	\end{remark}
	\section{HHO method discretization}\label{HHO Scheme}\setcounter{equation}{0}
	In this section, we present the hybrid high-order method 
	to discretize the mixed problem \eqref{sum}. Let's introduce some terminology and basic notations which are helpful in establishing the discrete problem. 
	\subsection{Preliminaries}\label{Preliminaries2}
	Let $\mathcal{M}_h=\{\CTh,\CFh\}$ denotes a polytopal mesh on $\O$, which belongs to a regular mesh sequence $\{\mathcal{M}_h\}_{h>0}$ in the sense of \cite[Sec.~1.1.2]{MR4230986}. Let $T\in\CTh$ {be a generic mesh element with boundary $\partial T$ (which can be split as $\pT = \pT^i\cup \pT^b$ where $\pT^i = \pT \cap \Omega$ and $\pT^b = \pT \cap \partial \Omega$), unit outward normal $\n_T$, inradius $r_T$ and diameter $h_T$ such that the global mesh size $ h=\text{max}_{T\in\CTh} h_T$. Let $F\in\CFh$  be a generic mesh face and $n_F$ denotes the unit outward normal to $F$.} Also, $\CFh=\CFh^i \cup \CFh^b$, where $\CFh^i$ is the set containing interfaces and $\CFh^b$ contains boundary faces. For any mesh element $T\in\CTh$, $\CF_{\partial T}:=\{F\in\CFh|\, F\subset\partial T\}$ denotes the set of faces contained in $\partial T$. The shape regularity implies that for all $T\in \CTh$ and $F\in\CF_{\partial T}$, the diameters $h_T$ and $h_F$ are uniformly equivalent and $\text{card}(\CF_{\partial T})$ is uniformly bounded. The mesh vertices are collected in the set $\mathcal{V}_h$. 
	
	For an integer $l\ge0$, let $\P^l(T) $ denote the set of polynomials up to degree $l$ on $T$. The broken Sobolev and polynomial spaces are defined as:
	\begin{align*}
		H^{s}(\CTh)&:=\{\v \in L^2(\Omega)|\, \v|_{T}\in H^{s}(T)\quad\forall T\in\CTh\},\\
		\P^l(\CTh)&:=\{\vh \in L^1(\Omega)|\, \vh|_{T} \in \P^l(T) \quad \forall T \in \CTh\}.
	\end{align*}
	
	For all $\v \in H^s(\CTh)$, $s>1/2$, its jump across any mesh interface $F \in \CF_h^i$ is defined as $[\![\v]\!] := \v|_{T_1}|_F - \v|_{T_2}|_F$, where $\n_F$ is oriented from $T_1$ to $T_2$. On every boundary face $F \in \CF_h^b$, we set $[\![\v]\!] := \v_T|_F$.
	
	Let $\Pi_T^l$ and $\Pi_{\pT}^l$ be the $L^2$-orthogonal projections onto $\P^{l}(T)$ and $\P^l(\CF_{\pT})=\times_{F\in\CF_{\pT}}\P^l(F)$, respectively. 
	Now, we discuss some useful inequalities and results that are helpful for further analysis in which $\rho$ is the mesh regularity parameter, $k\ge0$ is the polynomial degree, and $d$ denotes space dimension.
	\begin{lemma}\label{disinvtrineq}
		\begin{enumerate}
			\item {Discrete inverse and trace inequalities.} There exists a constant $C>0$ depending on  $\rho$, $k$ and {$d$} such that for all $T\in\CTh$
			\begin{subequations}
			\begin{align}
				\|\vh\|_{\partial T}&\le Ch_T^{-\frac{1}{2}}\|\vh\|_T && \forall \vh\in\P^k(T), \label{invtr}\\
				\|\nabla\vh\|_{T}&\le Ch_T^{-1}\|\vh\|_T &&  \forall \vh\in\P^k(T), \label{inv}\\
				\|\dt \vh\|_F&\le Ch_T^{-1}\|\vh\|_F &&  \forall \vh\in\P^k(T), \forall F\in\CF_{\pT}.\label{faceinv}
			\end{align} 
			\end{subequations}
			\item {Multiplicative trace inequality.}
			There exists a positive constant $C_{m}$ depending on $\rho$ and {$d$} such that for all $T\in\CTh$
			\begin{align}\label{mt}
				\|\v\|_{\partial T}\le C_m\left( h_T^{-\frac{1}{2}}\|\v\|_T+\|\v\|_T^{\frac{1}{2}}\|\nabla\v\|_T^{\frac{1}{2}}\right) \quad \forall \v\in H^1(T).
			\end{align}  
			\item {Poincar\'e inequality.} 
			There exists a constant $C_{P} >0$ depending on $\rho$ and $d$ such that for all $\v\in H^2(T)^\perp:=\{\v\in H^2(T)\ |\ (\v,\xi)_T=0 \ \forall \xi\in\P^1(T)\} $,
			\begin{align}\label{pcineq}
				h_T^{-2}\|\v\|_{ T}+ h_T^{-1}\|\nabla\v\|_{ T}\le C_P\|\nabla^2\v\|_T \qquad \forall T\in\CTh.
			\end{align} 
		\end{enumerate}
	\end{lemma}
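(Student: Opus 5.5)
The plan is to derive all three groups of estimates by scaling arguments on the shape-regular polytopal mesh. Specifically, we use the matching simplicial submesh guaranteed by the regularity notion of \cite[Sec.~1.1.2]{MR4230986}: each $T\in\CTh$ is a union of a uniformly bounded number of simplices $S$, each with $h_S\simeq h_T$ and inradius comparable to $h_T$. Every face $F\in\CF_{\pT}$ is also covered by a bounded number of simplex faces of comparable size. Once an estimate holds on each such simplex, summing over the submesh gives it on $T$.

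For the discrete inequalities \eqref{invtr} and \eqref{inv}, I map each simplex $S$ affinely to a reference simplex $\hat S$. On the finite-dimensional space $\P^k(\hat S)$, all norms are equivalent, so $\|\hat v\|_{\partial\hat S}+\|\hat\nabla\hat v\|_{\hat S}\le \hat C\|\hat v\|_{\hat S}$. Transforming back with Jacobian scalings $|S|\simeq h_S^d$, $|\partial S|\simeq h_S^{d-1}$ and $\|\nabla\hat x\|\simeq h_S^{-1}$ gives the powers $h_T^{-1/2}$ and $h_T^{-1}$. The constants depend only on $\rho$, $k$ and $d$. For \eqref{faceinv}, the restriction of $\vh$ to a face $F$ is a polynomial of degree $\le k$ on a $(d-1)$-dimensional set. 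I apply the same reference-element argument on the simplicial subfaces of $F$, using $h_F\simeq h_T$, to the tangential gradient.

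For the multiplicative trace inequality \eqref{mt}, I work on a simplex $S$ with a vertex $a$ opposite a boundary face $F_S$. I use the vector field $\boldsymbol{\sigma}(x)=(x-a)|F_S|/(d|S|)$, whose normal component equals $1$ on $F_S$ and vanishes on the other faces, and which satisfies $\|\boldsymbol\sigma\|_\infty\lesssim 1$ and $\div\boldsymbol\sigma\lesssim h_S^{-1}$. The divergence theorem applied to $v^2\boldsymbol\sigma$ then gives
\[
\|v\|_{F_S}^2=\int_S\bigl(v^2\div\boldsymbol\sigma+2v\nabla v\cdot\boldsymbol\sigma\bigr)\lesssim h_T^{-1}\|v\|_S^2+\|v\|_S\|\nabla v\|_S .
\]
Summing over the boundary subfaces and taking square roots yields \eqref{mt}. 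The estimate extends from smooth functions to all of $H^1(T)$ by density.

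For the Poincaré inequality \eqref{pcineq}, I apply the first-order Poincaré–Wirtinger inequality $\|w-\overline{w}\|_T\le C h_T\|\nabla w\|_T$ twice. This inequality holds on star-shaped or connected-by-submesh elements with a constant depending only on $\rho$ and $d$. First, the condition $(v,\xi)_T=0$ for all $\xi\in\P^1(T)$ implies $\Pi_T^1 v=0$. This gives $\|v\|_T=\|v-\Pi_T^1v\|_T\le \|v-q\|_T$ for every $q\in\P^1(T)$. I choose $q=\overline v+\overline{\nabla v}\cdot(x-x_T)$. Applying Poincaré to $v-q$, which has zero mean, gives $\|v-q\|_T\lesssim h_T\|\nabla v-\overline{\nabla v}\|_T\lesssim h_T^2\|\nabla^2 v\|_T$. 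Next, for the gradient, I note that $(\nabla v,\boldsymbol c)_T=0$ for every constant vector $\boldsymbol c$, by integrating by parts against linear $\xi$ and using the orthogonality. A cleaner route avoids boundary terms: we have $\|\nabla v\|_T\le\|\nabla(v-q)\|_T+\|\nabla(q-\Pi^1_T v)\|_T$. The first term is $\lesssim h_T\|\nabla^2v\|_T$. The second is a polynomial term bounded by \eqref{inv} times $\|q-\Pi_T^1v\|_T\le\|q-v\|_T\lesssim h_T^2\|\nabla^2 v\|_T$, giving $h_T\|\nabla^2v\|_T$ after the inverse estimate. Dividing by $h_T^2$ and $h_T$ respectively gives \eqref{pcineq}.

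The main obstacle I expect is making sure the constant in the first-order Poincaré inequality depends only on $\rho$ and $d$ on general polytopes, which need not be convex. I would handle this via the submesh by proving it on each simplex and gluing across shared faces with a chain argument on the bounded number of subsimplices. Alternatively, I would simply cite \cite[Ch.~1]{MR4230986}, which provides this result for regular mesh sequences.
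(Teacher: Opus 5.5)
Your proposal is correct and is essentially the standard argument behind the paper's proof, which only cites references: \cite[Sec.~1.4]{MR2882148} obtains the inverse and discrete trace inequalities by simplicial-submesh scaling and the multiplicative trace inequality by the same divergence-theorem argument with the field $(x-a)|F_S|/(d|S|)$, and the Poincar\'e inequality \eqref{pcineq} follows, as in your plan, by iterating the first-order Poincar\'e--Wirtinger inequality (constants as in \cite{MR1726480}). Two small points: take $x_T$ to be the barycenter of $T$ so that $v-q$ has zero mean, and drop the aside that $(\nabla v,\boldsymbol c)_T=0$ for constant $\boldsymbol c$, which is false in general (integration by parts leaves $\int_{\pT} v\,\boldsymbol c\cdot\n_T$, which orthogonality to $\P^1(T)$ does not remove), though your ``cleaner route'' correctly does not rely on it.
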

	The proof for Lemma~\ref{disinvtrineq} follows from \cite[Sec.~1.4]{MR2882148} and \cite{MR1726480}, respectively.
	
	\subsection{Local discrete spaces and bilinear forms}
	For $T\in\CTh$, we define the local HHO spaces as:
	\[
	\VTk:=\P^{k+2}(T) \times \P^{k+2}(\CF_{\partial T}) \times \P^{k}(\CF_{\partial T}), \qquad \qquad
	\UTk:=\P^{k+1}(T) \times \P^{k}(\CF_{\partial T}). 
	\]
	{A generic element $\ulinev_T \in \VTk$ is of the form $\ulinev_T=(\v_T,\v_{\partial T},\eta_{\partial T})$, where $\v_T \in \P^{k+2}(T)$ (represents the solution inside the mesh cell), $\v_{\partial T} \in \P^{k+2}(\CF_{\partial T})$ (represents the trace on the cell boundary) and $\eta_{\partial T} \in \P^{k}(\CF_{\partial T})$ (represents the normal derivative on the cell boundary along the the direction of the outward normal $\n_T$).} Any arbitrary element $\ulineu_T\in \UTk$ is of the form $\ulineu_T=(\u_T,\u_{\partial T})$, where $\u_T \in \P^{k+1}(T)$ and $\u_{\partial T} \in \P^{k}(\CF_{\partial T})$ have similar definitions. The local discrete semi-norms on $\VTk$ and $\UTk$ are defined as:
   \begin{subequations}
	\begin{align}
		|\ulinev_T|^2_{\VTk} &= \|\nabla^2\v_T\|^2_T+ h_T^{-1}\|\dn\v_T-\eta_{\pT}\|_{\pT} + h_T^{-3}\|\v_T-\v_{\pT}\|_{\pT},\\
		|\ulineu_T|^2_{\UTk} & = \|\nabla\u_T\|^2_T + h_T^{-1}\|\u_T-\u_{\pT}\|^2_{\pT}.
	\end{align}
	\end{subequations}
	To approximate the differential operators, we define the local hessian reconstruction operator {$\RT:\VTk\to\P^{k+2}(T)$} as:
	\begin{align}
		\label{R1} (\nabla^2 \RT(\ulinev_T),\nabla^2 \w)_{T} 
		&=(\nabla ^{2}\v_T,\nabla^{2} \w)_{T} +(\v_T-\v_{\pT},\dn\Delta \w)_{\pT} \\
		\notag & \quad \  -(\dn\v_T-\eta_{\pT},\dnn \w)_{\pT} \\
		\nonumber &\quad \ -(\dt(\v_T-\v_{\pT}),\dnt \w)_{\pT} && \forall\w\in\P^{k+2}(T),\\
		(\RT(\ulinev_T),\xi)_T&=(\v_T,\xi)_T &&\forall\xi\in\P^1(T),\notag
	\end{align}
	and the gradient reconstruction operator $\GT: \UTk\to\P^{k+1}(T)$ as: 
	\begin{align}
		\label{G1} (\nabla \GT(\ulineu_T),\nabla \w)_{T} 
		&=(\nabla \u_T,\nabla \w)_{T} -  (\u_T-\u_{\pT},\dn \w)_{\pT}\quad\forall\w\in\P^{k+1}(T), \\
		(\GT(\ulineu_T),1)_T&=(\u_T,1)_T.\notag
	\end{align}
	
	Next, in order to define the element-wise contribution, for all $\uliney_T,\ulinev_T \in \VTk$ such that $\uliney_T = (\y_T, \y_{\pT}, \gamma_{\pT}), \ \ulinev_T = (\v_T,\v_{\partial T},\eta_{\partial T})$ and $\ulinez_T,\ulineu_T \in \UTk$ such that $\ulinez_T = (\z_T, \z_{\pT}),\  \ulineu_T = (\u_T,\u_{\partial T})$, we define the following local bilinear forms:
	\begin{subequations}
	\begin{align}
		a^{\Delta}_T : \VTk \times \VTk \to \R, \ \  a^{\Delta}_T(\uliney_T,\ulinev_T) &= (\nabla^2 \RT(\uliney_T), \nabla^2 \RT(\ulinev_T))_T + S^{\Delta}_{\pT}(\uliney_T,\ulinev_T),\\
		a^{\nabla}_T : \UTk \times \UTk \to \R, \ \  a^{\nabla}_T(\ulinez_T,\ulineu_T) &= (\nabla \GT(\ulinez_T), \nabla \GT(\ulineu_T))_T  + S^{\nabla}_{\pT}(\ulinez_T,\ulineu_T),
	\end{align}
	\end{subequations}
	where the stabilization terms $S_{\pT}^{\Delta}(\cdot,\cdot)$ and $S_{\pT}^{\nabla}(\cdot,\cdot)$ are defined by following \cite[Sec.~5.1]{MR4485999} as:
	\begin{align}
		\label{stab4th} S_{\pT}^{\Delta}(\uliney_T,\ulinev_T) &:= h_T^{-3}(\y_{\pT}-\y_T, \v_{\pT}-\v_T)_{\pT}\\
		\nonumber &\quad +h_T^{-1}\left(\Pi_{\pT}^{k}(\gamma_{\pT}-\dn\y_T), \Pi_{\pT}^{k}(\eta_{\pT}-\dn\v_T)\right)_{\pT},\\
	\label{stab2nd}	S_{\pT}^{\nabla}(\ulinez_T,\ulineu_T) &:= h_T^{-1} (\Pi_{\pT}^{k}(\z_{\pT}-\z_T), \Pi_{\pT}^{k}(\u_{\pT}-\u_T))_{\pT}.
	\end{align}
	
	It is easy to verify that for all $\ulinev_T \in \VTk$ and $\ulineu_T \in \UTk$, we have the estimates
	\begin{align}
		\label{lbs1}	|\ulinev_T|^2_{\VTk} &\lesssim a^{\Delta}_T(\ulinev_T,\ulinev_T) \lesssim         |\ulinev_T|^2_{\VTk}, && 
		\ulineu_T|^2_{\UTk} \lesssim a^{\nabla}_T(\ulineu_T,\ulineu_T)\lesssim |\ulineu_T|^2_{\UTk}.
	\end{align}
	\subsection{Discrete problem}\label{Discrete problem}
	In the global setting, the discrete spaces are defined by 
	\[
	\Vhk:=\P^{k+2}(\CTh) \times \P^{k+2}(\CFh) \times {\P^{k}(\CFh)}, \qquad 
	\Uhk:=\P^{k+1}(\CTh) \times \P^{k}(\CFh).
	\]
	{A generic element $(\ulinevh,\ulineu_h) \in\Vhk\times\Uhk$ is such that $\ulinevh=(\v_{\CTh},\v_{\CFh},\eta_{\CFh})$ and $\ulineu_h=(\u_{\CTh},\u_{\CFh})$ with $\v_{\CTh}=(\v_T)_{T\in\CTh}$, $\v_{\CFh}=(\v_F)_{F\in\CFh}$ and $\eta_{\CFh}=(\eta_F)_{F\in\CFh}$, where $\eta_F$ is meant to approximate the normal derivative in the direction of unit normal vector $\n_F$ orienting $F$. The local components of $\ulinevh$ and $\ulineuh$ are collected in the triplet $\ulinev_T:= (\v_T,\v_{\partial T},\eta_{\partial T}) \in \VTk$ with $\v_{\partial T}|_{F}:= \v_{F}$, $\eta_{\partial T}|_{F}:= (\n_F \cdot \n_T)\eta_{F}$} and  $\ulineu_T:= (\u_T,\u_{\partial T}) \in \UTk$ with $\u_{\partial T}|_{F}:= \u_{F}$, respectively.
	For all $T\in\CTh$, a natural choice for the global HHO spaces which enforce the homogeneous boundary conditions are
	\[
	\Vhok :=\{\ulinevh\in \Vhk\ |\ \v_F=0\ \forall\ F \in\CFh^b\}, \qquad
	\Uhok :=\{\ulineuh\in \Uhk\ |\ \u_F=0\ \forall\ F \in\CFh^b\},
	\]
	which are equipped with norms 
	\[
	\|\ulinevh\|^2_{\Vhok} = \sum_{T\in\CTh}|\ulinev_T|^2_{\VTk}, \qquad 
	\|\ulineuh\|^2_{\Uhok} = \sum_{T\in\CTh}|\ulineu_T|^2_{\UTk}.
	\]

	The global discrete problem is: find $(\uliney_h,\ulinez_h)\in\Vhok\times\Uhok$ such that
	\begin{subequations}\label{discrform}
	\begin{align}
		\label{discrete1} a_h^{\Delta}(\uliney_h,\ulinev_h)- (\z_{\CTh},\v_{\CTh}) &= 0 && \forall \ulinevh \in \Vhok,\\
		\label{discrete2}	(\y_{\CTh},\u_{\CTh}) + \kappa a_h^{\nabla}(\ulinez_h,\ulineu_h)&=(\f,\u_{\CTh}) && \forall \ulineu_h \in \Uhok,
	\end{align}
	\end{subequations}
	where
	\[
	a_h^{\Delta}(\uliney_h,\ulinev_h) := \sum_{T\in\CTh}a_T^{\Delta}(\uliney_T,\ulinev_T) \qquad\text{and}\qquad 	a_h^{\nabla}(\ulinez_h,\ulineu_h):=\sum_{T\in\CTh}a_T^{\nabla}(\ulinez_T,\ulineu_T).
	\]
	
        To discuss the well-posedness of the discrete problem, consider the product space $\Zhok=\Vhok\times\Uhok$ which is equipped with the norm
        \[
        \|(\ulinev_h,\ulineu_h)\|_{\Zhok}:= \Big(\|\ulinev_h\|^2_{\Vhok}+\|\ulineu_h\|^2_{\Uhok}\Big)^{1/2}.
        \]
        
         Consider the discrete problem: find $(\uliney_h,\ulinez_h)\in \Zhok$ such that
		\begin{equation}\label{sumdiscrete} \mathcal{A}_h((\uliney_h,\ulinez_h),(\ulinev_h,\ulineu_h))=(\f,\u_{\CTh})\qquad  \forall(\ulinev_h,\ulineu_h)\in \Zhok,
		\end{equation}
		where the bilinear form $\mathcal{A}_h: \Zhok \times \Zhok \rightarrow \R$ is defined as:
		\begin{equation*}
			\mathcal{A}_h((\uliney_h,\ulinez_h),(\ulinev_h,\ulineu_h)):= a_h^{\Delta}(\uliney_h,\ulinev_h)-(\z_{\CTh},\v_{\CTh}) + (\y_{\CTh},\u_{\CTh}) + \kappa a_h^{\nabla}(\ulinez_h,\ulineu_h).
		\end{equation*}
		
		 By summing \eqref{lbs1} over all $T \in \CTh$, we get the global coercivity condition
		\begin{equation}
			\label{gcp} \|(\ulinev_h,\ulineu_h)\|^2_{\Zhok}\lesssim \mathcal{A}_h((\ulinev_h,\ulineu_h),(\ulinev_h,\ulineu_h)).
		\end{equation}

    Using global coercivity property \eqref{gcp}, an application of Lax-Milgram lemma ensures the well-posedness of the discrete problem \eqref{sumdiscrete}.
	\begin{remark}
		 For the Cahn-Hilliard boundary conditions, the discrete mixed problem is: find $(\uliney_h,\ulinez_h)\in {\tilde{\hat{V}}_{h,0}^k \times \tilde{\hat{U}}_{h,0}^k}$ such that
		 \[
		 \mathcal{A}_h((\uliney_h,\ulinez_h),(\ulinev_h,\ulineu_h))=(\f,\u_{\CTh})\qquad  \forall(\ulinev_h,\ulineu_h)\in {\tilde{\hat{V}}_{h,0}^k \times \tilde{\hat{U}}_{h,0}^k},
		 \]
	 where the discrete spaces ${\tilde{\hat{V}}_{h,0}^k}$ and $\tilde{\hat{U}}_{h,0}^k$ are defined as:
	 \begin{align*}
	 	{\tilde{\hat{V}}_{h,0}^k} \hspace{-0.2mm}:=\hspace{-0.2mm} \left\{\ulinev_h\in \Vhk|\, (\v_{\CTh},1) \hspace{-0.2mm}=\hspace{-0.2mm} 0,\, \eta_{F} \hspace{-0.2mm}=\hspace{-0.2mm} 0\ \forall F \in \CF^b\right\}, \quad
	 	{\tilde{\hat{U}}_{h,0}^k} \hspace{-0.2mm}:=\hspace{-0.2mm} \left\{\ulineu_h\in \Uhk|\, (\u_{\CTh},1) = 0\right\}.
	 \end{align*}
	\end{remark}

\subsection{A priori error analysis}
	To establish the a priori error estimates for first type of boundary conditions, consider the local reduction operators $\IVTk:H^{2}(T)\to\VTk$ and $\IUTk: H^1(T) \to\UTk$ from \cite[Sec.~5.2]{MR4485999} defined as:
	\begin{subequations}
	\begin{align*}
		\IVTk(\v)&:=(\PiTkpt\v,\Pi_{\pT}^{k+2}\v,\Pi_{\pT}^k(\n_T\cdot\nabla\v)),	&&
		\IUTk(\u):=(\Pi_T^{k+1}\u,\Pi_{\pT}^k\u).
	\end{align*}
		\end{subequations}
		
	The composition  of reconstruction and local reduction operators defines the following local elliptic projections:
	\begin{align}\label{ellipproj}
		\EVT&\hspace{-0.43mm}:=\hspace{-0.43mm}\RT \hspace{-0.3mm} \circ \hspace{-0.3mm}\IVTk:H^{2}(T) \hspace{-0.3mm} \to \hspace{-0.3mm} \P^{k+2}(T),\ 
		\EUT\hspace{-0.43mm}:=\hspace{-0.43mm}\GT\circ\IUTk:H^{1}(T) \hspace{-0.3mm} \to \hspace{-0.3mm} \P^{k+1}(T).
	\end{align}
	
	For $\u\in\U$, $\EUT(\cdot)$ satisfies the following properties:
	\begin{align*}
		(\nabla\EUT(\u),\nabla\zeta)_T&=(\nabla\u,\nabla\zeta)_T &&\forall\zeta\in\P^{k+1}(T),&&
		(\EUT(\u),1)_T =(\u,1)_T.
	\end{align*}
	
	For all $T\in\CTh$ and $\v\in H^{2+s}(T)$ with $s>\frac{3}{2}$, to bound the consistency error, we define the norm $\|\cdot\|_{\sharp,T,1}$ as:
	$$\|\v\|^2_{\sharp,T,1} :=\|\nabla^2\v\|^2_{T}+h_T^3\|\dn\Delta\v\|^2_{\pT}+h_T\|\dnn\v\|^2_{\pT}+h_T\|\dnt\v\|^2_{\pT}.$$
	
	We have the following estimates \cite[Lemma~5.4]{MR4485999}, \cite[Theorm~1.48]{MR4230986}:
	\begin{subequations}\label{ellpest}
		\begin{align}
			\label{projest1}	\|\nabla^2(\v-\EVT(\v))\|_{\sharp,T,1}&\lesssim \|\nabla^2(\v-\PiTkpt(\v))\|_{\sharp,T,1}  \quad\forall \v\in\V,\\
			\label{projest2}	|\u-\EUT(\u)|_{H^m(T)}&\lesssim h_T^{\lambda-m}|\u|_{H^{\lambda}(T)} \qquad \forall \u\in\U,\  0\le m\le \lambda \le k+2.
		\end{align}
	\end{subequations}
	
	The global reduction operators $\IVhk: H^{2}(\O) \to \Vhk$ and $\IUhk: H^{1}(\O) \to \Uhk$ are defined such that for all $\v\in H^{2}(\O)$ and $\u\in H^{1}(\O)$  
	\begin{subequations}\label{intrest}
	\begin{align}
		\IVhk(\v)&:= \left((\PiTkpt(\v))_{T\in\CTh}, (\Pi_F^{k+2}(\v))_{F\in\CFh}, (\PiFk(\n_T \! \cdot \! \nabla \v))_{F\in\CFh} \right), \\
		\IUhk(\u) & := \left((\Pi_T^{k+1}(\u))_{T\in\CTh},  (\PiFk(\u))_{F\in\CFh} \right).
	\end{align}
	\end{subequations}
	
	The local components of $\IVhk(\v)$ and $\IUhk(\u)$ are $\IVTk(\v|_T)$ and $\IUTk(\u|_T)$ for all $T\in\CTh$, respectively. Note that, for the exact solutions $(\y,\z)\in\V\times\U$, their image under the global reduction operators $(\IVhk(\y),\IUhk(\z))\in\Vhok\times\Uhok$. 

	Next, we define the  consistency error $\delta_h((\y,\z);\cdot)\in(\Zhok)^{'}$ as:
	\begin{align}
	\delta_h((\y,\z);(\ulinev_h,\ulineu_h)) := (\f,\u_{\CTh}) - \mathcal{A}_h((\IVhk\y,\IUhk\z),(\ulinevh,\ulineuh)).
	\end{align}
	\begin{lemma}[Consistency error]\label{consistecy err}
		Suppose that $\y\in H^{2+s}(\O)$ for $s>{3}/{2}$ and $\z\in H^{1+r}(\O) \cap H^{k+2}(\CTh)$ for $r > {1}/{2}$ be the solutions of  \eqref{wf12} and $(\uliney_h,\ulinez_h)\in \Zhok$ be the solution of \eqref{discrform}. Then, the consistency error satisfies the estimate
		\begin{align*}
			\|\delta_h((\y,\z);\cdot)\|_{(\Zhok)'} &:= \underset{(\ulinev_h,\ulineu_h) \in \Zhok}{\sup} \frac{|\delta_h((\y,\z);(\ulinev_h,\ulineu_h))|}{\|(\ulinev_h,\ulineu_h)\|_{\Zhok}} \\
			&\lesssim R_{s} := \Bigg( \sum_{T\in\CTh}h_T^{2(k+1)} |\z|^2_{H^{k+2}(T)} \hspace{-0.3mm}+\hspace{-0.3mm} \|\y-\PiTkpt\y\|^2_{\sharp,T,1} \Bigg)^{\frac{1}{2}}.
		\end{align*}
	\end{lemma}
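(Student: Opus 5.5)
The plan is to split the consistency error into a fourth-order part and a second-order part, and to show that the lower-order coupling terms cancel exactly. We use the strong form of the equations: $\Delta^2\y=\z$ and $\y-\kappa\Delta\z=\f$ in $\O$. Then $(\f,\u_{\CTh}) = (\y,\u_{\CTh}) - \kappa(\Delta\z,\u_{\CTh})$. Also $0 = (\Delta^2\y - \z,\v_{\CTh})$, so this term can be added to $\delta_h$ for free.

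Because $\PiTkpt$ and $\Pi_T^{k+1}$ are $L^2$-orthogonal projections and $\v_T\in\P^{k+2}(T)$, $\u_T\in\P^{k+1}(T)$, the coupling terms reduce exactly:
\[
(\PiTkpt\y,\u_{\CTh})=(\y,\u_{\CTh}), \qquad (\Pi_T^{k+1}\z,\v_{\CTh})=(\z,\v_{\CTh}).
\]
The first identity uses that $\u_T$ has degree $k+1\le k+2$. The second identity needs $\v_T$ of degree at most $k+1$, but $\v_T$ has degree $k+2$. So the term $(\z-\Pi_T^{k+1}\z,\v_T)_T$ does not vanish. It will be absorbed by the second-order part, as explained below. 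With these reductions,
\[
\delta_h = \underbrace{(\Delta^2\y,\v_{\CTh}) - a_h^{\Delta}(\IVhk\y,\ulinevh)}_{\mathfrak T_1} + \underbrace{(\Pi^{k+1}_{\CTh}\z-\z,\v_{\CTh})}_{\mathfrak T_2} + \underbrace{\kappa\big[-(\Delta\z,\u_{\CTh}) - a_h^{\nabla}(\IUhk\z,\ulineuh)\big]}_{\mathfrak T_3}.
\]

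\emph{Term $\mathfrak T_1$.} This is exactly the consistency error of the HHO(B) biharmonic scheme. We follow \cite[Lemma~5.5]{MR4485999}. Use $\RT(\IVTk\y)=\EVT\y$, write $\Delta^2\y$ elementwise, and integrate by parts twice. The boundary terms on interfaces cancel against the single-valued face unknowns because $\y\in H^{2+s}$ with $s>3/2$ gives well-defined traces of $\dn\Delta\y$, $\dnn\y$ and $\dnt\y$. On boundary faces, $\v_F=0$ and $\dnn\y=0$ handle them. Then insert the definition \eqref{R1} with $\w=\EVT\y$ (the difference $\y-\EVT\y$ appears). Cauchy–Schwarz gives the bound $\sum_T\|\y-\EVT\y\|_{\sharp,T,1}\,|\ulinev_T|_{\VTk}$. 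The stabilization is bounded in the same way, using the polynomial-consistency of $S^\Delta_{\pT}$ on $\P^{k+2}$. Finally \eqref{projest1} replaces $\EVT$ by $\PiTkpt$.

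\emph{Term $\mathfrak T_3$.} This is the standard HHO diffusion consistency error. Integrate $-(\Delta\z,\u_T)_T$ by parts and use the single-valuedness of $\nabla\z\cdot\n_F$ (this requires $r>1/2$) to insert $\u_{\pT}$. Combine with \eqref{G1}, where $\GT(\IUTk\z)=\EUT\z$, to obtain terms of the form
\[
(\nabla(\z-\EUT\z)\cdot\n_T,\u_{\pT}-\u_T)_{\pT}\quad\text{and}\quad S^\nabla_{\pT}(\IUTk\z,\ulineu_T).
\]
Bound these with the multiplicative trace inequality \eqref{mt} and \eqref{projest2} with $\lambda=k+2$, which gives $h_T^{k+1}|\z|_{H^{k+2}(T)}\,|\ulineu_T|_{\UTk}$.

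\emph{Term $\mathfrak T_2$ (main obstacle).} This term must be bounded by $h_T^{k+1}|\z|_{H^{k+2}(T)}$ times $\|\ulinevh\|_{\Vhok}$. First try: $\|\z-\Pi_T^{k+1}\z\|_T\lesssim h_T^{k+2}|\z|_{H^{k+2}(T)}$. Then a discrete Poincaré-type inequality $\|\v_{\CTh}\|\lesssim\|\ulinevh\|_{\Vhok}$ on $\Vhok$ (available for HHO spaces with zero boundary trace, via a conforming companion/averaging argument) yields an even better power $h^{k+2}$. If that inequality is not to be invoked, the alternative is a local argument: subtract $\Pi^1_T\v_T$, use orthogonality together with \eqref{pcineq}, which gives $h_T^{2}\|\nabla^2\v_T\|_T$. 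Either way the bound is dominated by $R_s$. Summing the three terms and using Cauchy–Schwarz over the elements, then dividing by $\|(\ulinev_h,\ulineu_h)\|_{\Zhok}$, gives the claim.
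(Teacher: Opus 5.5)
Your proposal is correct and follows essentially the same route as the paper. Both arguments add the vanishing term $(\Delta^2 y - z, v_{\mathcal{T}_h})$, cancel $(y-\Pi_T^{k+2}y,u_T)_T$ by $L^2$-orthogonality, treat the biharmonic and diffusion parts through the elliptic projections $\mathcal{E}_{V,T}$ and $\mathcal{E}_{U,T}$ using the single-valuedness of the face fluxes, and handle the coupling term $(z-\Pi_T^{k+1}z,v_T)_T$ by a local argument. Your option (b) is exactly the paper's treatment of that term. The paper subtracts $\Pi_T^{k+1}v_T$ instead of $\Pi_T^1 v_T$, which gives the same $h_T^{k+4}|z|_{H^{k+2}(T)}\|\nabla^2 v_T\|_T$ bound, so the discrete Poincar\'e alternative is not needed.
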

	\begin{proof}
		Let $(\ulinev_h,\ulineu_h)\in \Zhok$. Using \eqref{4thorder} and integrating by parts cellwise, we get 
		\begin{align*}
			(\f,\u_{\CTh}) &= \sum_{T\in\CTh} \Big( (\y,\u_T)_T + \kappa(\nabla\z,\nabla\u_T)_T - \kappa(\dn\z,\u_T)_{\pT} - (\z,\v_T)_T + (\nabla^2\y,\nabla^2\v_T)_T\\
			&\qquad \qquad + (\dn\Delta\y,\v_T)_{\pT} - (\dnn\y,\dn\v_T)_{\pT} - (\dnt\y,\dt\v_T)_{\pT} \Big).
		\end{align*}
		Since $\dn\z|_{\pT},\dn\Delta\y|_{\pT}, \dnn\y|_{\pT}$ and $\dnt\y|_{\pT}$ are meaningful in $L^2(\pT)$, single-valued at each mesh interfaces and $\dnn\y|_{\pT}$ vanishes on boundary faces. Moreover, since $\u_{\pT}, \v_{\pT}, \eta_{\pT}, \dt\v_T$ are single-valued at every mesh interface and $\u_{\pT}, \v_{\pT}, \dt\v_T$  vanish on boundary faces, we have
		\begin{align*}
			(\f,\u_{\CTh}) \! &=\hspace{-1.5mm} \sum_{T\in\CTh}\hspace{-1mm}\Big(\!(\y,\u_T)_T \! + \! \kappa(\nabla\z,\nabla\u_T)_T \! - \! \kappa(\dn\z,\u_T \!-\!\u_{\pT})_{\pT} \!- \! (\z,\v_T)_T \! + \! (\nabla^2\y,\!\nabla^2\v_T)_T\\ 
			&\qquad \qquad +(\dn\Delta\y,\v_T-\v_{\pT})_{\pT}-(\dnn\y,\dn\v_T)_{\pT} -(\dnt\y,\dt(\v_T-\v_{\pT}))_{\pT}\Big),
		\end{align*}
		
		Also, from \eqref{sumdiscrete}, \eqref{ellipproj} and the definition of reconstruction operators, we obtain
		\begin{align*}
			\mathcal{A}_h((\IVhk\y,\IUhk\z),(\ulinevh,\ulineuh)) 
			& =\! \sum_{T\in\CTh} \! \Big( (\PiTkpt\y,\u_T)_T + \kappa(\nabla\EUT\z,\nabla\u_T)_T \! - \! (\Pi_T^{k+1}\z,\v_T)_T \\
			&  \hspace{-1.8cm}+S^{\nabla}_{\pT}(\IUTk\z,\ulineu_T)- \kappa(\dn\EUT(\z),\u_T-\u_{\pT})_{\pT} + (\nabla^2 \EVT(\y),\nabla^2\v_T)_T\\
			&  \hspace{-1.67cm} + (\dn\Delta\EVT(\y),\v_T-\v_{\pT})_{\pT} -(\dnn\EVT(\y),\dn\v_T-\eta_{\pT})_{\pT} \\
			&  \hspace{-1.67cm} - (\dnt\EVT(\y),\dt(\v_T-\v_{\pT}))_{\pT} + S^{\Delta}_{\pT}(\IVTk\y,\ulinev_T) \Big).
		\end{align*}
		
		Define functions $\mu,\tau$ cellwise such that $\mu|_T = \y|_T-\EVT(\y|_T)$, $\tau|_T = \u|_T-\EUT(\u|_T)$ and infer that 
		\begin{align*}
			\delta_h((\y,\z);(\ulineu_h,\ulinev_h))
			& = \sum_{T\in\CTh} \Big( (\y-\PiTkpt\y,\u_T)_T + \kappa(\nabla\tau,\nabla\u_T)_T - (\z-\Pi_T^{k+1}\z,\v_T)_T \\
			& \qquad \qquad - S^{\nabla}_{\pT}(\IUTk\z,\ulineu_T) - \kappa(\dn\tau,\u_T-\u_{\pT})_{\pT} + (\nabla^2 \mu,\nabla^2\v_T)_T\\
			& \qquad \qquad +(\dn\Delta\mu,\v_T-\v_{\pT})_{\pT} -(\dnn\mu,\dn\v_T-\eta_{\pT})_{\pT}  \\
			& \qquad \qquad -(\dnt\mu,\dt(\v_T-\v_{\pT}))_{\pT} - S^{\Delta}_{\pT}(\IVTk\y,\ulinev_T) \Big).
		\end{align*}
		
		Note that, $(\nabla\tau,\nabla\u_T)_T=0$, but we keep this term since it can be bounded in a manner similar to the other terms. Also, by the definition of $L^2$-projection, $$(\y-\PiTkpt\y,\u_T)_T=0, \qquad (\z-\Pi_T^{k+1}\z,\v_T)_T = (\z-\Pi_T^{k+1}\z,\v_T - \Pi_T^{k+1}\v_T)_T.$$ 
		
		The remaining terms are bounded as:
		\begin{align*}
			\left|\sum_{T\in \CTh}(\z-\Pi_T^{k+1}\z,\v_T - \Pi_T^{k+1}\v_T)_T\right| 
			&\lesssim \sum_{T\in \CTh} h_T^{k+4}|\z|_{H^{k+2}(T)}|\ulinev_T|_{\VTk},\\
			\left|\sum_{T\in \CTh}\kappa(\nabla\tau,\nabla\u_T)_T - \kappa(\dn\tau,\u_T-\u_{\pT})_{\pT}\right| 
			&\lesssim \sum_{T\in \CTh} h_T^{k+1}|\z|_{H^{k+2}(T)}|\ulineu_T|_{\UTk},
		\end{align*}
		and
	\begin{align*}
			|S^{\nabla}_{\pT}(\IUTk\z,\ulineu_T)| \le S^{\nabla}_{\pT}(\IUTk\z,\IUTk\z)^{\frac{1}{2}} S^{\nabla}_{\pT}(\ulineu_T,\ulineu_T)^{\frac{1}{2}}
			&\lesssim h_T^{k+1}|\z|_{H^{k+2}(T)}|\ulineu_T|_{\UTk},\\
			|S^{\Delta}_{\pT}(\IVTk\y,\ulinev_T)| \le S^{\Delta}_{\pT}(\IVTk\y,\IVTk\y)^{\frac{1}{2}}S^{\Delta}_{\pT}(\ulinev_T,\ulinev_T)^{\frac{1}{2}} & \lesssim  \|\y-\PiTkpt\y\|_{\sharp,T,1}|\ulinev_T|_{\VTk},\\
			|(\nabla^2 \mu,\nabla^2\v_T)_T +(\dn\Delta\mu,\v_T-\v_{\pT})_{\pT}
			&\\ 
			- (\dnn\mu,\dn\v_T-\eta_{\pT})_{\pT} -(\dnt\mu,\dt(\v_T-\v_{\pT}))_{\pT}|
			&\lesssim \|\y-\PiTkpt\y\|_{\sharp,T,1}|\ulinev_T|_{\VTk}.
		\end{align*}
		
		Combining these estimates, we deduce that 
		\begin{align*}
			|\delta_h((\y,\z);(\ulinev_h,\ulineu_h))|&\lesssim R_{s} \|(\ulinev_h,\ulineu_h)\|_{\Zhok}.
		\end{align*}
	\end{proof}
	\begin{lemma}\label{discrete error}
{Under the assumptions made in Lemma~\ref{consistecy err}, we have the estimate}
		\begin{align*}
			\|(\uliney_h-\IVhk\y,\ulinez_h-\IUhk\z)\|_{\Zhok} &\lesssim R_{s} \|(\ulineu_h,\ulinev_h)\|_{\Zhok}.
		\end{align*}
	\end{lemma}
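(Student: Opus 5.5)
The plan is the standard Strang-type argument for HHO methods: combine the global coercivity \eqref{gcp} with the consistency bound of Lemma~\ref{consistecy err}. The stated right-hand side contains the stray factor $\|(\ulineu_h,\ulinev_h)\|_{\Zhok}$. I read it as a typo for the bound $\|(\uliney_h-\IVhk\y,\ulinez_h-\IUhk\z)\|_{\Zhok}\lesssim R_s$, which is what the argument produces.

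First I introduce the discrete error
\[
(\ulinee_h,\hat{w}_h) := (\uliney_h-\IVhk\y,\ \ulinez_h-\IUhk\z).
\]
As observed before the definition of $\delta_h$, the reductions $(\IVhk\y,\IUhk\z)$ of the exact solution lie in $\Vhok\times\Uhok$. Hence $(\ulinee_h,\hat{w}_h)\in\Zhok$ is an admissible test function.

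Next I derive the error equation. By bilinearity of $\mathcal{A}_h$ and the discrete problem \eqref{sumdiscrete}, for every $(\ulinev_h,\ulineu_h)\in\Zhok$,
\[
\mathcal{A}_h((\ulinee_h,\hat{w}_h),(\ulinev_h,\ulineu_h)) = (\f,\u_{\CTh}) - \mathcal{A}_h((\IVhk\y,\IUhk\z),(\ulinev_h,\ulineu_h)) = \delta_h((\y,\z);(\ulinev_h,\ulineu_h)).
\]

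Then I choose $(\ulinev_h,\ulineu_h)=(\ulinee_h,\hat{w}_h)$ and apply \eqref{gcp}:
\[
\|(\ulinee_h,\hat{w}_h)\|_{\Zhok}^2 \lesssim \mathcal{A}_h((\ulinee_h,\hat{w}_h),(\ulinee_h,\hat{w}_h)) = \delta_h((\y,\z);(\ulinee_h,\hat{w}_h)) \le \|\delta_h((\y,\z);\cdot)\|_{(\Zhok)'}\,\|(\ulinee_h,\hat{w}_h)\|_{\Zhok}.
\]
Lemma~\ref{consistecy err} bounds the dual norm by $R_s$. Dividing by $\|(\ulinee_h,\hat{w}_h)\|_{\Zhok}$ gives the claim; the case where this norm vanishes is trivial.

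The only point needing care is the coercivity constant in \eqref{gcp}. Since $\mathcal{A}_h$ contains $\kappa a_h^{\nabla}$, the hidden constant depends on $\min(1,\kappa)$. This also uses the cancellation of the skew-symmetric coupling terms $-(\z_{\CTh},\v_{\CTh})+(\y_{\CTh},\u_{\CTh})$ on the diagonal, exactly as in the proof of Lemma~\ref{Lemma 2.1}. Beyond that, the argument is immediate, since the analytic work is already contained in Lemma~\ref{consistecy err}.
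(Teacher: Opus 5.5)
Your proof is correct and follows the paper's own argument: coercivity \eqref{gcp} applied to the discrete error, the error equation that identifies $\mathcal{A}_h$ of that error with $\delta_h$, and Lemma~\ref{consistecy err} to bound the dual norm. You are also right to read the stray factor on the right-hand side as a typo, since the paper's proof likewise yields a bound by $R_s$ alone.
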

	\begin{proof}
 Since, $(\IVhk(\y),\IUhk(\z))\in \Zhok$, let $\ulinee_h=\uliney_h-\IVhk\y$ and $\ulinew_h=\ulinez_h-\IUhk\z$. From the global coercivity \eqref{gcp}, we deduce that 
 \begin{align*}
 		\|(\ulinee_h,\ulinew_h)\|^2_{\Zhok}&\lesssim \mathcal{A}_h \big((\ulinee_h,\ulinew_h),(\ulinee_h,\ulinew_h)\big)\\
 		&=|\mathcal{A}_h\big((\uliney_h,\ulinez_h),(\ulinee_h,\ulinew_h)\big)- \mathcal{A}_h\big((\IVhk(\y),\IUhk(\z)),(\ulinee_h,\ulinew_h)\big)|\\
 		&=|(\f,\u_{\CTh})- \mathcal{A}_h\big((\IVhk(\y),\IUhk(\z)),(\ulinee_h,\ulinew_h)\big)|\\
 		&=	|\delta_h((\y,\z);(\ulinee_h,\ulinew_h))| 
 		\lesssim \|\delta_h((\y,\z);\cdot)\|_{(\Zhok)'}	\|(\ulinee_h,\ulinew_h)\|_{\Zhok}.
 \end{align*}
 Now, an application of previous lemma concludes the proof.
	\end{proof}
	\begin{theorem} \label{HHOenergyerr}
		{Suppose that the assumptions made in Lemma~\ref{consistecy err} hold true. In addition, if we also have $\y \in H^{\max\{4, k+3\}}(\CTh)$, then}
		\begin{align*}
			&\sum_{T\in\CTh}(\|\nabla^2(\y-\RT(\uliney_T))\|_T^2 +\|\nabla(\z-\GT(\ulinez_T))\|_T^2) \\
			&\lesssim \begin{cases}
				(h_T|\y|_{H^{3}(T)} + h_T^2|y|_{H^4})^2 + h_T^2 |z|_{H^2(T)}^2 \quad \text{if}\ \ k=0,\\
				h_T^{2(k+1)}(|y|_{H^{k+3}(T)}^2 + |z|_{H^{k+2}(T)}^2) \qquad\qquad\ \  \text{if}\ \ k\ge1.
			\end{cases}
		\end{align*}
	\end{theorem}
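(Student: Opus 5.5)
The plan is a triangle-inequality argument through the elliptic projections $\EVT$ and $\EUT$. It combines the discrete error bound of Lemma~\ref{discrete error} with the approximation properties \eqref{ellpest}. For each $T\in\CTh$ we split
\begin{align*}
\nabla^2(\y-\RT(\uliney_T)) &= \nabla^2(\y-\EVT(\y)) + \nabla^2\RT\big(\IVTk\y-\uliney_T\big),\\
\nabla(\z-\GT(\ulinez_T)) &= \nabla(\z-\EUT(\z)) + \nabla\GT\big(\IUTk\z-\ulinez_T\big).
\end{align*}
These identities use the linearity of $\RT$ and $\GT$ and the definitions $\EVT=\RT\circ\IVTk$, $\EUT=\GT\circ\IUTk$.

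The discrete parts are handled by stability of the reconstructions. By the local lower bound \eqref{lbs1}, $\|\nabla^2\RT(\ulinev_T)\|_T^2\le a_T^{\Delta}(\ulinev_T,\ulinev_T)\lesssim|\ulinev_T|_{\VTk}^2$ and $\|\nabla\GT(\ulineu_T)\|_T^2\lesssim|\ulineu_T|_{\UTk}^2$. Summing over $T$ and applying Lemma~\ref{discrete error} (read as $\|(\ulinee_h,\ulinew_h)\|_{\Zhok}\lesssim R_s$, which is what its proof gives after dividing by the norm) bounds the discrete contribution by $R_s^2$.

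The projection parts are controlled directly. By \eqref{projest1} we have $\|\nabla^2(\y-\EVT\y)\|_T\le\|\y-\EVT\y\|_{\sharp,T,1}\lesssim\|\y-\PiTkpt\y\|_{\sharp,T,1}$. By \eqref{projest2} with $m=1$, $\lambda=k+2$, we have $\|\nabla(\z-\EUT\z)\|_T\lesssim h_T^{k+1}|\z|_{H^{k+2}(T)}$. Hence the whole left-hand side is bounded by a multiple of
\[
R_s^2=\sum_{T\in\CTh}\Big(h_T^{2(k+1)}|\z|^2_{H^{k+2}(T)}+\|\y-\PiTkpt\y\|^2_{\sharp,T,1}\Big).
\]

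The remaining, and main, step is estimating $\|\y-\PiTkpt\y\|_{\sharp,T,1}$. Write $\eta:=\y-\PiTkpt\y$. The volume term is handled by the standard $L^2$-projection estimate $\|\nabla^2\eta\|_T\lesssim h_T^{k+1}|\y|_{H^{k+3}(T)}$. For the boundary terms $h_T\|\dnn\eta\|^2_{\pT}$ and $h_T\|\dnt\eta\|_{\pT}^2$ we use the multiplicative trace inequality \eqref{mt} on second derivatives together with projection estimates on $|\eta|_{H^3(T)}$, which gives again $h_T^{k+1}|\y|_{H^{k+3}(T)}$.

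The delicate term is $h_T^3\|\dn\Delta\eta\|^2_{\pT}$, since \eqref{mt} applied to third derivatives needs $|\eta|_{H^4(T)}$.
\begin{itemize}
\item For $k\ge1$, $\PiTkpt$ reproduces cubics, so $|\eta|_{H^4(T)}\lesssim h_T^{k-1}|\y|_{H^{k+3}(T)}$ and $|\eta|_{H^3(T)}\lesssim h_T^{k}|\y|_{H^{k+3}(T)}$. The scaling then yields $h_T^{k+1}|\y|_{H^{k+3}(T)}$.
\item For $k=0$, the degree is $k+2=2$, so $\nabla^3\PiTkpt\y=0$ and $\nabla^4\eta=\nabla^4\y$. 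The trace inequality then gives $h_T^{3/2}\|\dn\Delta\eta\|_{\pT}\lesssim h_T|\y|_{H^3(T)}+h_T^{3/2}|\y|_{H^3(T)}^{1/2}\,h_T^{1/2}\,|\y|_{H^4(T)}^{1/2}$. Young's inequality turns this into $h_T|\y|_{H^3(T)}+h_T^2|\y|_{H^4(T)}$, which explains the extra $H^4$ term and the assumption $\y\in H^{\max\{4,k+3\}}(\CTh)$.
\end{itemize}
Collecting the two cases gives the stated estimate. I expect the only real obstacle to be keeping the powers of $h_T$ consistent in this trace-scaling step for the third-order boundary term.
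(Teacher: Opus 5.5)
Your proposal is correct and follows essentially the same route as the paper: triangle inequality through $\EVT=\RT\circ\IVTk$ and $\EUT=\GT\circ\IUTk$, local boundedness of the reconstructions, Lemma~\ref{discrete error} and \eqref{ellpest}, with your explicit estimate of $\|\y-\PiTkpt\y\|_{\sharp,T,1}$ filling in a step the paper leaves implicit. Two small slips. In the $k=0$ trace bound the second term should be $h_T^{3/2}|\y|_{H^3(T)}^{1/2}|\y|_{H^4(T)}^{1/2}$, with no extra factor $h_T^{1/2}$; Young's inequality applied to this term yields $h_T|\y|_{H^3(T)}+h_T^2|\y|_{H^4(T)}$. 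Also, the bound $\|\nabla^2\RT(\cdot)\|_T^2\lesssim|\cdot|_{\VTk}^2$ uses the upper inequality in \eqref{lbs1}, not the lower one.
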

	\begin{proof}
		By a use of the triangle inequality, we have
		\begin{align*}
			&\sum_{T\in\CTh}(\|\nabla^2(\y-\RT(\uliney_T))\|_T^2+\|\nabla(\z-\GT(\ulinez_T))\|_T^2)\\
			&\lesssim  \sum_{T\in\CTh}\Big(\|\nabla^2(\y-\EVT\y)\|_T^2+\|\nabla^2 \RT(\IVTk\y-\uliney_T)\|_T^2\\
			&\qquad \qquad +\|\nabla(\z-\EUT\z)\|_T^2+\|\nabla \GT(\IUTk\z-\ulinez_T)\|_T^2\Big)\\
			&\lesssim  \sum_{T\in\CTh}\Big(\|\nabla^2(\y-\EVT\y)\|_T^2+\|\IVTk\y-\uliney_T\|_{\VTk}^2\\
			&\qquad \qquad +\|\nabla(\z-\EUT\z)\|_T^2+\|\IUTk\z-\ulinez_T\|_{\UTk}^2\Big),
		\end{align*}
		where we have used the local boundedness of discrete problem. The bound for first and third term follows from the estimates \eqref{ellpest} and the bound for second and fourth term follows from Lemma~\ref{discrete error}. This concludes the proof.
	\end{proof}
	\section{$C^0$-conforming HHO discretization}\label{C0 HHO Scheme}\setcounter{equation}{0}
	In this section, we discretize the variable $\y$ using $C^0$-HHO method and $\z$ with conforming FEM scheme for a  shape regular simplicial mesh sequence $\CMh$ {(in the sense of \cite{MR1930132})}. Let $\P_\mathrm{g}^l(\CTh)$ denotes the usual $C^0$-conforming finite element space of continuous piecewise polynomials of degree at most $l$. For $l+1 > {d}/{2}$, let $\Lhl$ denotes the $H^1$-conforming Lagrange interpolation operator on $\P_\mathrm{g}^l(\CTh)$ and $\LTl$ is its local version mapping onto $\P^l(T)$ for all $T\in\CTh$. There exists a constant $C >0$ such that, for an integer $s$ with $1<s\le l+1$, and $m\in\{0,\dots,s\}$, for all $T\in\CTh$
	\begin{equation}\label{lagrangeerr}
		|\v-\LTl(\v)|_{H^m(T)}\le C h^{s-m}|\v|_{H^s(T)} \quad\forall \v\in H^s(T).
	\end{equation}
	
	We define the local HHO space for a mesh element $T\in\CTh$ as 
	$$\VTk:=\P^{k+2}(T) \times\P^{k}(\CF_{\partial T}),$$ such that {
	$\ulinev_T=(\v_T,\eta_{\partial T})$ is a generic element of $\VTk$, where $\v_T \in \P^{k+2}(T)$ represents the solution inside the mesh cell and $\eta_{\partial T} \in \P^{k}(\CF_{\partial T})$ represents the trace of normal derivative (oriented by $\n_T$) on the cell boundary.} 
		
	To formulate the HHO method on each cell, the local reconstruction operator {$\RTc:\VTk\to\P^{k+2}(T)$} is defined as: \vspace{-1mm}
	\begin{align}
		\label{R2} (\nabla^2 \RTc(\ulinev_T),\nabla^2 \w)_{T}
		&=(\nabla ^{2}\v_T,\nabla^{2} \w)_{T} \hspace{-0.4mm}-\hspace{-0.4mm}(\dn\v_T \hspace{-0.4mm}-\hspace{-0.4mm} \eta_{\pT},\dnn \w)_{\pT}\ \forall \w\in\P^{k+2}(T),\\
		(\RTc(\ulinev_T),\xi)_T&=(\v_T,\xi)_T\qquad \qquad \qquad \qquad \qquad \qquad \qquad \hspace{-0.9mm}\forall \xi\in\P^1(T).\notag
	\end{align}
	
	Now, we define the local bilinear form $a^{c}_T:\VTk\times\VTk\to\R$ as:
	\begin{align}
	  a^{c}_T(\uliney_T,\ulinev_T) &= (\nabla^2 \RTc(\uliney_T),\nabla^2 \RTc(\ulinev_T))_T + S^{c}_{\pT}(\uliney_T,\ulinev_T),
	\end{align}
	for all $\uliney_T, \ulinev_T \in \VTk$, where $\uliney_T = (\y_T,\gamma_{\pT})$ and stabilization term $S_{\pT}^c(\cdot,\cdot)$ is defined as:
	\[
	S_{\pT}^c(\uliney_T,\ulinev_T) :=  h_T^{-1}\left(\Pi_{\pT}^{k}(\gamma_{\pT}-\dn\y_T),\Pi_{\pT}^{k}(\eta_{\pT}-\dn\v_T)\right)_{\pT}.
	\]
	 
	We define the local energy semi-norm on the space $\VTk$ as:
	\begin{align*}
		|\ulinev_T|^2_{\VTk}&:=\|\nabla^2\v_T\|^2_{T}+ h_T^{-1}\|\eta_{\pT} -\dn\v_T\|^2_{\pT}.
	\end{align*}
	
	\subsection{Discrete problem}\label{Discrete problem 2}
	The global $C^0$-HHO space is given by 
	\[
	\Vhk:=\P_{\mathrm{g}}^{k+2}(\CTh) \times {\P^{k}(\CFh)}.
	\]

	{The generic element $\ulinevh \in\Vhk$ is such that $\ulinevh=(\v_{\CTh},\eta_{\CFh})$ with $\v_{\CTh}=(\v_T)_{T\in\CTh}$ and $\eta_{\CFh}=(\eta_F)_{F\in\CFh}$. For all $T \in \CTh$, the local components of $\ulinevh$ are collected in the pair $\ulinev_T:= (\v_T, \eta_{\partial T}) \in \VTk$ with $\eta_{\partial T}|_{F}:= (\n_F \cdot \n_T)\eta_{F}$.}
	We use space ${\P^{k}(\CF_h)}$ to approximate the normal derivative in the direction of unit normal vector $\n_F$. 
	A natural choice for the global HHO space with homogeneous boundary condition is 
	\[
	\Vhok:=\{\ulinevh\in \Vhk\ |\ \v_{\CTh}\in H_0^1(\O)\}.
	\]
	
	We equip the space $\Vhok$ with the norm $\|\ulinevh\|_{\Vhok}^2=\sum\limits_{T\in\CTh}|\ulinev_T|^2_{\VTk}$. Let $\Usfhok$ be the subspace of $\P_{\mathrm{g}}^{k+1}(\CTh)$ which consists of the functions vanishing on the boundary $\partial\O$. For any mesh element $T\in\CTh$ and $\u_{\CTh}\in\Usfhok$, the restriction of $\u_{\CTh}$ on $T$ is denoted by $\u_T$.
	The global discrete problem is: find $(\uliney_h,\z_{\CTh})\in\Vhok\times\Usfhok$ such that
	\begin{subequations}\label{c0discrete}
	\begin{align}
		\label{C0discrete1} a^{c}_h(\uliney_h,\ulinev_h)-(\z_{\CTh},\v_{\CTh}) &= 0 && \forall \ulinevh \in \Vhok,\\
		\label{C0discrete2} (\y_{\CTh},\u_{\CTh}) + \kappa(\nabla\z_{\CTh},\nabla\u_{\CTh})&=(\f,\u_{\CTh}) && \forall \u_{\CTh} \in \Usfhok,
	\end{align}
\end{subequations}
	where $a^{c}_h(\uliney_h,\ulinev_h):=\sum_{T\in\CTh} a^{c}_T(\uliney_T,\ulinev_T)$.
	For the well-posedness, consider the space $\Zhok=\Vhok\times\Usfhok$ equipped with the norm
	\[
	\|(\ulinev_h,\ulineu_h)\|_{\Zhok}:= \Big(\|\ulinev_h\|^2_{\Vhok}+\|\nabla\u_{\CTh}\|^2\Big)^{1/2}.
	\]
	Consider the problem: find $(\uliney_h,\z_{\CTh})\in \Zhok$ such that
		\begin{equation}\label{C0discrete}
			\mscr{A}_h((\uliney_h,\z_{\CTh}),(\ulinev_h,\u_{\CTh}))=(\f,\u_{\CTh}) \qquad \forall (\ulinev_h,\ulineu_h)\in \Zhok,
		\end{equation}
		where 
		\begin{equation*}
			\mscr{A}_h((\uliney_h,\z_{\CTh}),(\ulinev_h,\u_{\CTh}))=a^{c}_h(\uliney_h,\ulinev_h)-(\z_{\CTh},\v_{\CTh}) 
			+	(\y_{\CTh},\u_{\CTh}) + \kappa(\nabla\z_{\CTh},\nabla\u_{\CTh}).
		\end{equation*}
     Using the global stability and boundedness, an application of Lax-Milgram lemma ensures well-posedness of the discrete problem \eqref{C0discrete}.
	\begin{remark}
     For Cahn-Hilliard boundary conditions, the discrete mixed problem is: find $(\uliney_h,\z_{\CTh})\in {\tilde{\hat{V}}_{h,0}^k \times \tilde{{U}}_{h,0}^k}$ such that
     \[
     \mscr{A}_h((\uliney_h,\ulinez_h),(\ulinev_h,\ulineu_h))=(\f,\u_{\CTh})\qquad  \forall(\ulinev_h,\ulineu_h)\in  {\tilde{\hat{V}}_{h,0}^k \times \tilde{{U}}_{h,0}^k},
     \]
    where the discrete spaces $\tilde{\hat{V}}_{h,0}^k$ and $\tilde{{U}}_{h,0}^k$ are defined as:
    \begin{align*}
	     \tilde{\hat{V}}_{h,0}^k  &:= \{\ulinev_h\in \Vhk \ | \ (\v_{\CTh},1) = 0\}, \quad 
	     \tilde{{U}}_{h,0}^k := \{\u_{\CTh}\in \P_\mathrm{g}^{k+1}({\CTh})\cap H^1(\Omega)  \ | \ (\u_{\CTh},1) = 0\}.
    \end{align*}	
		\end{remark}
	
	For all $T\in\CTh$ and $\v\in H^{2+s}(T)$ with $s>{1}/{2}$, we introduce the norm $\|\cdot\|_{\sharp,T,2}$ to bound the consistency error
	\[
	\|\v\|^2_{\sharp,T,2} :=\|\nabla^2\v\|^2_{T}+h_T\|\dnn\v\|^2_{\pT}.
	\]

	Next, define the local reduction operator $\IVTkm: H^2(T) \to \VTk$ as:
	\begin{equation*}
		\IVTkm(\v):=\left(\LTkpt(\v),\Pi_{\pT}^{k}(\n_T.\nabla\v)\right)\qquad\forall \v\in H^2(T).
	\end{equation*} 
	
	Also, define the projector $\EVTm:H^2(T)\to\P^{k+2}(T)$ as $\EVTm=\RTc\circ\IVTkm$. Note that,  for all $T\in\CTh$ and $\v\in H^{2+s}(T)$ where $s>{1}/{2}$, by \cite[Lemma~3.8]{MR4699573}
	\begin{equation}\label{errlemma}
		\|\nabla^2(\v-\EVTm(\v))\|_{\sharp,T,2}+S^c_{\pT}(\IVTkm(\v),\IVTkm(\v))^{\frac{1}{2}}\le C\|\nabla^2(\v-\LTkpt(\v))\|_{\sharp,T,2}.
	\end{equation}
	
	The global interpolator $\IVhkm:H^2(\Omega)\to \Vhk$ is defined as:
	\begin{equation*}
		\IVhkm(\v):=\left(\Lhkpt(\v),(\PiFk(\n_F.\nabla\v))_{F\in\CFh}\right) \quad\forall\v\in H^2(\Omega).
	\end{equation*} 
	
	The local components of $\IVhkm(\v)$ are $\IVTkm(\v|_T)$ for all $T\in\CTh$. 
	\subsection{A priori error estimates} 
	To derive the error estimates for the system, we define the  consistency error $\delta_h((\y,\z);\cdot)\in(\Zhok)^{'}$ as:
	\begin{align}
		\delta_h((\y,\z);(\ulinev_h,\u_{\CTh})):= (\f,\u_{\CTh})-\mscr{A}_h((\IVhkm\y,\LTki\z),(\ulinevh,\u_{\CTh})),
	\end{align}
	\begin{lemma}[Consistency error]\label{C0Consistency err}
		Let $\y\in H^{2+s}(\O) \cap H^{k+3}(\CTh)$ for $s>1/2$ and $\z\in\U\cap H^{k+2}(\CTh)$ be the solutions of \eqref{wf12} and $(\uliney_h,\ulinez_h)\in \Zhok$ be the solution of \eqref{c0discrete}. Then, the consistency error satisfies the estimate
		\begin{equation}\label{C0CE}
			\|\delta_h((\y,\z);\cdot)\|_{(\Zhok)^{'}}\lesssim \Bigg( \sum_{T \in \CTh} h_T^{2(k+1)} (|y|_{H^{k+3}(T)}^2 + |z|_{H^{k+2}(T)}^2) \Bigg)^{\frac{1}{2}}.
		\end{equation}
	\end{lemma}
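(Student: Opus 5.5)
Following the proof of Lemma~\ref{consistecy err}, I write $(\f,\u_{\CTh})$ through the continuous equations, subtract the discrete form evaluated at the interpolants, and bound the resulting residual term by term. Fix $(\ulinev_h,\u_{\CTh})\in\Zhok$. Since $\z\in H^2$ and $\u_{\CTh}\in\Usfhok\subset\U$, testing the second equation of \eqref{4thorder} with the conforming $\u_{\CTh}$ gives exactly $(\f,\u_{\CTh})=(\y,\u_{\CTh})+\kappa(\nabla\z,\nabla\u_{\CTh})$, with no boundary terms. For the fourth-order part I use $\z=\Delta^2\y$ and integrate by parts cellwise:
\begin{align*}
0 = \sum_{T\in\CTh}\Big(-(\z,\v_T)_T+(\nabla^2\y,\nabla^2\v_T)_T+(\dn\Delta\y,\v_T)_{\pT}-(\dnn\y,\dn\v_T)_{\pT}-(\dnt\y,\dt\v_T)_{\pT}\Big).
\end{align*}
Because $\v_{\CTh}\in H^1_0(\O)$ is continuous, $\v_T$ and $\dt\v_T$ are single valued on interfaces and vanish on boundary faces, while $\dn\Delta\y$ and $\dnt\y$ are single valued. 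Hence the third and fifth terms sum to zero. Likewise $\dnn\y$ is single valued, $\eta_{\pT}$ is single valued up to the sign $\n_F\cdot\n_T$, and $\dnn\y=0$ on $\partial\O$, so $\eta_{\pT}$ may be subtracted. The result is
$$\sum_T\big((\nabla^2\y,\nabla^2\v_T)_T-(\dnn\y,\dn\v_T-\eta_{\pT})_{\pT}\big)-(\z,\v_{\CTh}).$$

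On the discrete side, \eqref{R2} with $\w=\RTc(\ulinev_T)$ gives $(\nabla^2\EVTm\y,\nabla^2\RTc\ulinev_T)_T=(\nabla^2\EVTm\y,\nabla^2\v_T)_T-(\dnn\EVTm\y,\dn\v_T-\eta_{\pT})_{\pT}$. Setting $\mu|_T=\y-\EVTm\y$ and subtracting, the consistency error splits into four groups:
\begin{align*}
\delta_h &= \sum_T\Big[(\nabla^2\mu,\nabla^2\v_T)_T-(\dnn\mu,\dn\v_T-\eta_{\pT})_{\pT}-S^c_{\pT}(\IVTkm\y,\ulinev_T)\Big]\\
&\quad -(\z-\LTki\z,\v_{\CTh})+(\y-\Lhkpt\y,\u_{\CTh})+\kappa(\nabla(\z-\LTki\z),\nabla\u_{\CTh}).
\end{align*}

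I bound each group separately. The first bracket is controlled by Cauchy--Schwarz using $h_T^{1/2}\|\dnn\mu\|_{\pT}\,h_T^{-1/2}\|\dn\v_T-\eta_{\pT}\|_{\pT}$, followed by \eqref{errlemma} and \eqref{lagrangeerr} with $s=k+3$; this gives $h_T^{k+1}|\y|_{H^{k+3}(T)}|\ulinev_T|_{\VTk}$. The $\kappa$ term follows from \eqref{lagrangeerr} and gives $h^{k+1}|\z|_{H^{k+2}}\|\nabla\u_{\CTh}\|$. For the $L^2$ terms I use $\|\z-\LTki\z\|\lesssim h^{k+2}|\z|_{H^{k+2}}$ and a Poincar\'e-type bound $\|\v_{\CTh}\|\lesssim\|\ulinev_h\|_{\Vhok}$. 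Similarly, $\|\y-\Lhkpt\y\|\lesssim h^{k+3}|\y|_{H^{k+3}}$ together with $\|\u_{\CTh}\|\lesssim\|\nabla\u_{\CTh}\|$ (the Poincar\'e inequality on $H^1_0$) handles the remaining term.

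The main obstacle is the discrete Poincar\'e inequality $\|\v_{\CTh}\|\lesssim\|\ulinev_h\|_{\Vhok}$. Unlike in the fully non-conforming case, no $L^2$ orthogonality is available to gain powers of $h$ here, because $\LTki$ is not a projection. I would argue as follows. Since $\v_{\CTh}\in H^1_0$, we have $\|\v_{\CTh}\|\lesssim\|\nabla\v_{\CTh}\|$. Each component of $\nabla\v_{\CTh}$ is a piecewise polynomial whose tangential jumps vanish (by continuity) and whose normal jumps are controlled by $\|\dn\v_T-\eta_{\pT}\|_{\pT}$. Applying a broken Poincar\'e inequality for piecewise $H^1$ functions, or equivalently the $C^0$-HHO norm-equivalence established in the cited $C^0$-HHO work \cite{MR4699573}, then gives $\|\nabla\v_{\CTh}\|\lesssim\|\ulinev_h\|_{\Vhok}$. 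Summing all the bounds and applying Cauchy--Schwarz over $T$ yields \eqref{C0CE}.
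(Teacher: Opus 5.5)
Your proposal is correct and follows the same route as the paper. You use the same cellwise integration by parts, the same splitting of $\delta_h$ into the $\y-\EVTm\y$ group, the stabilization term and the Lagrange-interpolation residuals, and the same bounds via \eqref{errlemma} and \eqref{lagrangeerr}; you also spell out the discrete Poincar\'e inequality $\|\v_{\CTh}\|\lesssim\|\ulinev_h\|_{\Vhok}$, which the paper's one-line conclusion uses without comment. Two small fixes: \eqref{R2} should be applied with $\w=\EVTm\y$ (not $\w=\RTc(\ulinev_T)$), and the broken Poincar\'e step should use the zero-mean version, justified by $\int_\O\nabla\v_{\CTh}\,dx=0$ for $\v_{\CTh}\in H_0^1(\O)$, because the boundary values of $\dn\v_{\CTh}$ are not controlled by the norm (the $\eta_F$ are unconstrained on boundary faces).
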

	\begin{proof}
			Let $(\ulinev_h,\u_{\CTh})\in \Zhok$. Since $\v_{\CTh},\u_{\CTh}\in H_0^1(\O)$, using the PDEs and integrating by part cellwise, we get 
		\begin{align*}
			(\f,\u_{\CTh}) &=\! \sum_{T\in\CTh}\! (\y,\u_T)_T+\kappa(\nabla\z,\nabla\u_T)_T -(\z,\v_T)_T + (\nabla^2\y,\nabla^2\v_T)_T -(\dnn\y,\dn\v_T)_{\pT}.
		\end{align*}
		
		The quantity $\dnn\y|_{\pT}$ is meaningful in $L^2(\pT)$, single-valued at each mesh interfaces and vanishes at boundary faces. Since  $\eta_{\pT}$ is single-valued at every mesh interface, we have
		\begin{align*}
			(\f,\u_{\CTh}) &= \sum_{T\in\CTh}\Big((\y,\u_T)_T+\kappa(\nabla\z,\nabla\u_T)_T -(\z,\v_T)_T+(\nabla^2\y,\nabla^2\v_T)_T\\ 
			&\qquad \qquad -(\dnn\y,\dn\v_T-\eta_{\pT})_{\pT} \Big).
		\end{align*}
		
		Also, from \eqref{sumdiscrete} and the definition of reconstruction operators, we deduce that
		\begin{align*}
			\mscr{A}_h(\hspace{-0.5mm}(\IVhkm\y,\LTki\z),(\ulinevh,\u_{\CTh})\hspace{-0.5mm}) \!
			&=\hspace{-0.5mm} \sum_{T\in\CTh}\hspace{-1mm}\Big(\hspace{-0.5mm} (\LTkpt\y,\u_T)_T \! + \! \kappa(\nabla\LTki\z,\!\nabla\u_T)_T \! - \! (\LTki\z,\v_T)_T \\
			& \qquad + (\nabla^2 \RTc(\y),\nabla^2\v_T)_T \! - \! (\dnn\EVTm(\y),\dn\v_T \! - \! \eta_{\pT})_{\pT} \\
			&\qquad + S^c_{\pT}(\IVTkm\y,\ulinev_T)\Big). 
		\end{align*}
		
		Define $\theta,\sigma$ cellwise such that $\theta|_T = \y|_T-\EVTm(\y|_T)$ and $\sigma|_T = \z|_T-\LTki(\z|_T)$ and  infer that 
		\begin{align*}
			\delta_h((\y,\z);(\ulinev_h,\ulineu_h))
			&=\sum_{T\in\CTh}\Big( (\y-\LTkpt\y,\u_T)_T + \kappa(\nabla\sigma,\nabla\u_T)_T - (\sigma,\v_T)_T \\
			&\qquad \ + (\nabla^2 \theta,\nabla^2\v_T)_T - (\dnn\theta,\dn\v_T-\eta_{\pT})_{\pT} -S^{c}_{\pT}(\IVTkm\y,\ulinev_T)\Big).
		\end{align*}
		
		A use of approximation properties in \eqref{errlemma} and \eqref{lagrangeerr} concludes the proof.
	\end{proof}
\begin{lemma}\label{discrete error2}
{Under the assumption made in Lemma~\ref{C0Consistency err}, the following holds true:}
	\begin{align}
		\|(\uliney_h-\IVhkm\y, \z_{\CTh}-\mathcal{L}_h^{k+1}\z)\|_{\Zhok}^2\lesssim\sum_{T \in \CTh} h_T^{2(k+1)} (|y|_{H^{k+3}(T)}^2 + |z|_{H^{k+2}(T)}^2).
	\end{align}
\end{lemma}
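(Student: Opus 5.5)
The argument mirrors the proof of Lemma~\ref{discrete error}, with the coercivity of $\mscr{A}_h$ on $\Zhok$ playing the role of \eqref{gcp} and Lemma~\ref{C0Consistency err} supplying the consistency bound. The first step is to note that the interpolants are admissible test functions. Since $\y\in\V$, the Lagrange interpolant $\Lhkpt\y$ is continuous and vanishes on $\partial\O$, so $\IVhkm\y\in\Vhok$. Since $\z\in\U$, we likewise have $\mathcal{L}_h^{k+1}\z\in\Usfhok$; here $\LTki\z$ in the definition of $\delta_h$ is read cellwise as the restriction of $\mathcal{L}_h^{k+1}\z$. We then set
\[
\ulinee_h:=\uliney_h-\IVhkm\y\in\Vhok,\qquad w_{\CTh}:=\z_{\CTh}-\mathcal{L}_h^{k+1}\z\in\Usfhok .
\]

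The second step is discrete coercivity,
\[
\|(\ulinev_h,\u_{\CTh})\|^2_{\Zhok}\lesssim\mscr{A}_h((\ulinev_h,\u_{\CTh}),(\ulinev_h,\u_{\CTh})).
\]
The skew terms $-(\u_{\CTh},\v_{\CTh})+(\v_{\CTh},\u_{\CTh})$ cancel, which leaves $a^c_h(\ulinev_h,\ulinev_h)+\kappa\|\nabla\u_{\CTh}\|^2$. It therefore suffices to have the local bound $|\ulinev_T|^2_{\VTk}\lesssim a^c_T(\ulinev_T,\ulinev_T)$, the analogue of \eqref{lbs1}. This follows the standard HHO argument, which I take as known from \cite{MR4699573}. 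Test \eqref{R2} with $\w=\v_T$ to get $\|\nabla^2\v_T\|_T^2$. Bound the boundary term by the discrete trace inequality \eqref{invtr}, using $\dnn\v_T\in\P^k$ so that $\Pi^k_{\pT}$ can be inserted. This gives
\[
\|\nabla^2\v_T\|_T\lesssim\|\nabla^2\RTc(\ulinev_T)\|_T+S^c_{\pT}(\ulinev_T,\ulinev_T)^{1/2}.
\]
A subtle point is that $|\ulinev_T|_{\VTk}$ contains $\|\eta_{\pT}-\dn\v_T\|_{\pT}$ without the projection $\Pi^k_{\pT}$, whereas $\dn\v_T\in\P^{k+1}$. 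I would therefore also control $(I-\Pi^k_{\pT})\dn\v_T$ by $h_T^{1/2}\|\nabla^2\v_T\|_T$, using \eqref{invtr} together with the approximation properties of $\Pi^k_{\pT}$.

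The main step is the error identity. By \eqref{C0discrete},
\[
\mscr{A}_h((\ulinee_h,w_{\CTh}),(\ulinee_h,w_{\CTh}))=(\f,w_{\CTh})-\mscr{A}_h((\IVhkm\y,\mathcal{L}_h^{k+1}\z),(\ulinee_h,w_{\CTh}))=\delta_h((\y,\z);(\ulinee_h,w_{\CTh})).
\]
Coercivity then gives
\[
\|(\ulinee_h,w_{\CTh})\|^2_{\Zhok}\lesssim\|\delta_h((\y,\z);\cdot)\|_{(\Zhok)'}\,\|(\ulinee_h,w_{\CTh})\|_{\Zhok}.
\]
Dividing by the norm and squaring, Lemma~\ref{C0Consistency err} yields the claimed bound.

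I expect the only real obstacle to be the coercivity verification, in particular the mismatch between the projected stabilization and the unprojected seminorm noted above. If that local estimate gives trouble, the fallback is to invoke the corresponding stability lemma in \cite{MR4699573} directly. This is legitimate because $a^c_T$ and $S^c_{\pT}$ coincide with the operators defined there.
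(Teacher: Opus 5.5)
Your proposal is correct and takes essentially the same route as the paper. Both apply the coercivity of $\mscr{A}_h$ to $(\ulinee_h,\w_{\CTh})$, use the discrete problem \eqref{C0discrete} to identify $\mscr{A}_h((\ulinee_h,\w_{\CTh}),(\ulinee_h,\w_{\CTh}))$ with $\delta_h((\y,\z);(\ulinee_h,\w_{\CTh}))$, and conclude with Lemma~\ref{C0Consistency err}; the only addition is that you verify the local coercivity of $a^c_T$, including the unprojected part $(I-\Pi^k_{\pT})\dn\v_T$ of the seminorm, which the paper simply asserts.
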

\begin{proof}
	Let $\ulineeh:= \uliney_h-\IVhkm\y\in \Vhok$ and $\w_{\CTh}:= \z_{\CTh} - \mathcal{L}_h^{k+1}\z \in \Usfhok$, using the coercivity of discrete problem, we get
	\begin{align*}
			\|(\ulineeh, \w_{\CTh})\|^2_{\Zhok}&\lesssim \mscr{A}_h\left((\ulineeh, \w_{\CTh}),(\ulineeh, \w_{\CTh})\right)\\
			& =\mscr{A}_h\left((\ulineyh, \z_{\CTh}), (\ulineeh, \w_{\CTh})\right)-
			 \mscr{A}_h\left((\IVhkm\y, \mathcal{L}_h^{k+1}\z), (\ulineeh, \w_{\CTh})\right)\\
			& = (\f, \w_{\CTh}) -  \mscr{A}_h\left((\IVhkm\y, \mathcal{L}_h^{k+1}\z), (\ulineeh, \w_{\CTh})\right)\\
			& = \delta_h((\y,\z);\ulineeh,\w_{\CTh}).
	\end{align*}
	
	By using Lemma~\ref{C0Consistency err}, we conclude the proof.
\end{proof}
\begin{theorem}
	With the assumption made in Lemma~\ref{C0Consistency err}, the solution satisfy 
	\begin{align*}
		\sum_{T\in\CTh} \hspace{-0.8mm} \big(\|\nabla^2(\y \hspace{-0.3mm}-\hspace{-0.3mm}\RTc(\uliney_T))\|_T^2 \hspace{-0.3mm}+\hspace{-0.3mm}\|\nabla(\z\hspace{-0.3mm}-\hspace{-0.3mm}\z_T)\|_T^2\big)\lesssim \sum_{T \in \CTh} h_T^{2(k+1)} (|y|_{H^{k+3}(T)}^2 \hspace{-0.3mm}+\hspace{-0.3mm} |z|_{H^{k+2}(T)}^2).
	\end{align*}
\end{theorem}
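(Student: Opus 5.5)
We argue exactly as in the proof of Theorem~\ref{HHOenergyerr}: split each error into an approximation part and a discrete part by inserting the elliptic projection $\EVTm(\y)=\RTc(\IVTkm\y)$ for the first variable and the Lagrange interpolant $\LTki\z$ for the second. On each $T\in\CTh$ the triangle inequality gives
\begin{align*}
\|\nabla^2(\y-\RTc(\uliney_T))\|_T &\le \|\nabla^2(\y-\EVTm\y)\|_T + \|\nabla^2\RTc(\IVTkm\y-\uliney_T)\|_T,\\
\|\nabla(\z-\z_T)\|_T &\le \|\nabla(\z-\LTki\z)\|_T + \|\nabla(\LTki\z-\z_T)\|_T .
\end{align*}
Here we use the linearity of $\RTc$ and the fact that the local components of $\IVhkm\y$ are $\IVTkm(\y|_T)$.

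For the approximation parts, \eqref{errlemma} bounds $\|\nabla^2(\y-\EVTm\y)\|_T$ by $\|\nabla^2(\y-\LTkpt\y)\|_{\sharp,T,2}$. The volume contribution of that norm is handled directly by \eqref{lagrangeerr} with $l=k+2$, $s=k+3$, $m=2$, which gives $h_T^{k+1}|\y|_{H^{k+3}(T)}$. The boundary contribution $h_T^{1/2}\|\dnn(\y-\LTkpt\y)\|_{\pT}$ is treated with the multiplicative trace inequality \eqref{mt} applied to second derivatives, followed by \eqref{lagrangeerr} with $m=2$ and $m=3$. This also gives $h_T^{k+1}|\y|_{H^{k+3}(T)}$.

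\textbf{Main obstacle.} This boundary step is the only point needing care. Since $\y\in H^{k+3}(T)$ with $k\ge0$, the third derivatives are in $L^2(T)$ and \eqref{mt} applies. The remaining approximation term $\|\nabla(\z-\LTki\z)\|_T\lesssim h_T^{k+1}|\z|_{H^{k+2}(T)}$ follows from \eqref{lagrangeerr} with $l=k+1$, $s=k+2$, $m=1$. If $k=0$ and $d=3$, the requirement $s>1$ in \eqref{lagrangeerr} still holds because $s=2$.

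For the discrete parts, we first need local boundedness of the reconstruction: $\|\nabla^2\RTc(\ulinev_T)\|_T\lesssim|\ulinev_T|_{\VTk}$. To see this, take $\w=\RTc(\ulinev_T)$ in \eqref{R2}, apply Cauchy--Schwarz, and use the discrete trace inequality \eqref{invtr} on $\dnn\w$, which gives $\|\dnn\w\|_{\pT}\lesssim h_T^{-1/2}\|\nabla^2\w\|_T$. Squaring and summing over $T$ then bounds the discrete contributions by
\[
\|\IVhkm\y-\uliney_h\|_{\Vhok}^2+\|\nabla(\z_{\CTh}-\mathcal{L}_h^{k+1}\z)\|^2 = \|(\uliney_h-\IVhkm\y,\ \z_{\CTh}-\mathcal{L}_h^{k+1}\z)\|_{\Zhok}^2 .
\]
Lemma~\ref{discrete error2} estimates this quantity by $\sum_T h_T^{2(k+1)}(|\y|_{H^{k+3}(T)}^2+|\z|_{H^{k+2}(T)}^2)$. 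Combining the approximation and discrete bounds and summing over $T\in\CTh$ yields the assertion.
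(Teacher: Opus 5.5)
Your proposal is correct and follows the same route as the paper. The paper's proof just refers back to Theorem~\ref{HHOenergyerr}: a triangle inequality inserting $\EVTm(\y)$ and $\LTki\z$, local boundedness of the reconstruction, the approximation bounds \eqref{errlemma} and \eqref{lagrangeerr}, and Lemma~\ref{discrete error2} for the discrete parts. You also supply two details the paper leaves implicit, namely the bound $\|\nabla^2\RTc(\ulinev_T)\|_T\lesssim|\ulinev_T|_{\VTk}$ and the treatment of the face term $h_T^{1/2}\|\dnn(\y-\LTkpt\y)\|_{\pT}$ via the multiplicative trace inequality.
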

\begin{proof}
The proof follows by arguments similar to Theorem~\ref{HHOenergyerr}.
\end{proof}
\begin{remark}
	{In case of Cahn-Hilliard boundary conditions, similar convergence estimates can be obtained for both schemes by following the same arguments with minor modifications. To avoid repetition, we omit the detailed presentation.}
\end{remark} 
	\section{A posteriori error estimates}\label{A posteriori error estimates}
	In this section, we perform a posteriori error analysis of the non-conforming HHO method presented in Sec.~\ref{HHO Scheme} for polynomial degree $k\ge1$. Firstly, we introduce some operators useful for the analysis.
	\subsection{Interpolation operators and estimator}\label{Interpolation operators and estimator}
		For all $p\ge2$ and an arbitrary small $\epsilon \ge 0$, let $\mathcal{I}_{\text{mBS}}^p : H_0^1(\O) \cap H^{\frac{3}{2}+\epsilon}(\O) \to \P^p(\CTh)\cap H_0^1(\O)$ denotes the modified Babu$\check{\text{s}}$ka-Suri $hp$-interpolation operator defined in \cite[Sec.~2]{dong2026hp}, such that, for all $\v \in H_0^1(\O) \cap H^{\frac{3}{2}+\epsilon}(\Omega) \cap H^2(\CTh)$ and $ T\in \CTh$, the following holds:  
		\begin{align} 
			\label{approx mbs} 
			& \left(\frac{p}{h_T}\right)^2\|\v - \mathcal{I}_{\text{mBS}}^p (\v)\|_T + \left(\frac{p}{h_T}\right)^{\frac{3}{2}}\|\v - \mathcal{I}_{\text{mBS}}^p (\v)\|_{\pT} + \frac{p}{h_T}\|\v - \mathcal{I}_{\text{mBS}}^p (\v)\|_T\\
			\nonumber &+ \left(\frac{p}{h_T}\right)^{\frac{1}{2}}\|\dn(\v - \mathcal{I}_{\text{mBS}}^p (\v))\|_{\pT} + \|\nabla_h^2\mathcal{I}_{\text{mBS}}^p (\v)\|_T \lesssim \|\nabla^2 \v\|_{T}.
		\end{align}
		
		For $p\ge 1$, the modified $hp$-Karkulik–Melenk operator $\mathcal{I}_{\text{mKM}}^p : H_0^1(\O) \to \P^p(\CTh)\cap H_0^1(\O)$ is defined similar to \cite[Corollary~2.5]{dong2025hperroranalysismixedorderhybrid} and satisfies, for all $\u \in H_0^1(\O)$ and $ T\in \CTh$, 
    \begin{align}\label{approx mkm}
    	 \frac{p}{h_T}\|\u \hspace{-0.3mm}-\hspace{-0.3mm} \mathcal{I}_{\text{mKM}}^p (\u)\|_T \hspace{-0.3mm}+\hspace{-0.3mm} \left(\hspace{-0.3mm}\frac{p}{h_T}\hspace{-0.3mm}\right)^{\frac{1}{2}} \hspace{-0.63mm} \|\u \hspace{-0.3mm}-\hspace{-0.3mm} \mathcal{I}_{\text{mKM}}^p (\u)\|_{\pT} \hspace{-0.3mm}+\hspace{-0.3mm} \|\nabla\mathcal{I}_{\text{mKM}}^p (\u)\|_T \hspace{-0.3mm}\lesssim \hspace{-0.3mm} \|\nabla \u\|_{\text{es}(T)},
    \end{align}
    where $\text{es}(T)$ denotes the set of mesh elements which share at least one vertex with the cell star of $T$.
    
    For $k\ge1$, we use the following HHO interpolation operators $\mathcal{J}_{\V,h}^k :V \to \Vhok$ and $\mathcal{J}_{\U,h}^k : U \to \Uhok$ defined in \cite[Eq.~(23b)]{dong2026hp} and \cite[Appendix~7.1]{dong2025hperroranalysismixedorderhybrid}, respectively, as:
    \begin{subequations}
    \begin{align}
    	\mathcal{J}_{\V,h}^k(\v) &: = \left( \mathcal{I}_{\text{mBS}}^{k+2} (\v), (\mathcal{I}_{\text{mBS}}^{k+2} (\v)|_F)_{F\in\CFh}, (\PiFk(\n_F \! \cdot \! \nabla \v)|_F)_{F\in\CFh}  \right) && \forall v \in V,\\
    	\mathcal{J}_{\U,h}^k(\u) &: = \left( \mathcal{I}_{\text{mKM}}^{k} (\u), (\mathcal{I}_{\text{mKM}}^{k} (\u)|_F)_{F\in\CFh} \right) && \forall u \in U.
    \end{align}
    	\end{subequations}

    We will use the notation $\hbar_T := \frac{h_T}{k+2}$ and $\tilde{h}_T:= \frac{h_T}{k+1} $ and define the local error indicators and data oscillation term as: 
	\begin{align*}
		\boldsymbol{\eta}_{T,\res}& :=\hbar^2_T \|\z_{\CTh}-  \Delta^2\RT(\y_T)\|_T+  \tilde{h}_T  \| \Pi_T^{k+1}(\f) + \y_{\CTh} + \kappa\Delta \GT(\ulinez_T)\|_T,\\
		\boldsymbol{\eta}_{T,\sta} &:=S^{\Delta}_{\pT}(\uliney_T,\uliney_T)^{\frac{1}{2}} + S^{\nabla}_{\partial T}(\ulinez_T,\ulinez_T)^{\frac{1}{2}} + \big( \hbar_T\big)^{-\frac{1}{2}} \|\dn\y_{\CTh}\|_{\pT^b},\\
		\boldsymbol{\eta}_{T,\nor}& :=\left( \hbar_T\right)^{\frac{1}{2}} \| [\![ \dnt \RT(\uliney_T)]\!]\|_{\pT^i} +  \left( \hbar_T \right)^{\frac{3}{2}} \| [\![\dn\Delta \RT(\uliney_T)]\!]\|_{\pT^i}\\ 
		 &\quad + \big( \tilde h_T\big)^{\frac{1}{2}}\| [\![ \dn \GT(\ulinez_T)]\!]\|_{\pT^i}, \\
		\boldsymbol{\eta}_{T,\tan}& :=  \left( \hbar_T\right)^{\frac{1}{2}}\Big(\| [\![ \nabla \z_{\CTh}]\!] \times \n_T\|_{\pT}   + \| [\![\dnt \y_{\CTh}]\!] \|_{\pT}   + \| [\![\dtt \y_{\CTh}]\!]  \|_{\pT} \Big),\\
		\mathcal{O}_T(f) &:=\tilde{h}_T \|\f - \Pi_T^{k+1}(f)\|_T,
	\end{align*}
	where we have replaced $h_T$ by ${h_T}/{(k+2)^2}$ and ${h_T}/{(k+1)^2}$ in the stabilization terms $S^{\Delta}(\cdot, \cdot)$ and $S^{\nabla}(\cdot, \cdot)$, respectively, defined in \eqref{stab4th} and \eqref{stab2nd}, respectively. These parameters are used to ensure that the $hp$-analysis doesn't fall beyond the present scope. Similar parameter choices have been considered for DG schemes in \cite{MR2048235}, and for HHO methods applied to biharmonic and Poisson problems in \cite{dong2026hp} and \cite{dong2025hperroranalysismixedorderhybrid}, respectively
	
	We decompose the error $e=  (\ey, \ez) = (\y-\y_{\CTh}, \z-\z_{\CTh})$ into two parts as:
	\begin{equation}\label{cderrors}
     (\ey, \ez) = (\y-\y_c, \z-\z_c) + (\y_c-\y_{\CTh}, \z_c-\z_{\CTh}) := (\ey_c, \ez_c) + (\ey_d, \ez_d),
  \end{equation} 
  where $\ey_c\in V$ and $\ez_c \in \U$ denote the conforming error and $\ey_d$ and $ \ez_d$ denote non-conforming error.
  
   \subsection{Bound for conforming error}
   In this subsection we derive a bound for the conforming errors $\ey_c$ and $\ez_c$. By $\nabla_{\CTh} \cdot$ and $\nabla^2_{\CTh} \cdot$, we denote the broken gradient and broken Hessian operators acting cellwise on $H^1(\CTh)$ and $H^2(\CTh)$, respectively.
 \begin{lemma}\label{conferr in y and z}
 	For the conforming errors $\ey_c$ and $\ez_c$ defined in \eqref{cderrors}, we have
 	\begin{align}
 		\|\nabla^2 \ey_c\|^2 + \|\nabla \ez_c\|^2 &\lesssim\sum_{T\in\CTh}\Big( \boldsymbol{\eta}_{T,\res}^2 + \boldsymbol{\eta}_{T,\nor}^2 + \mathcal{O}_T(\f)^2 + (k+2) S_{\pT}^{\Delta}(\uliney_T, \uliney_T)\\
 	   \notag	& \qquad \qquad +  (k+1) \kappa \, S_{\pT}^{\nabla}(\ulinez_T, \ulinez_T )  + \|\nabla^2_{\CTh} \ey_d\|^2 + \|\ey_d\| ^2 \\
 	   \nonumber &\qquad \qquad + \|\nabla_{\CTh} \ez_d\|^2 + \|\ez_d\| ^2\Big).
 	\end{align}
 \end{lemma}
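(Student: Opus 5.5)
The plan is to choose the conforming parts $(\y_c,\z_c)\in\V\times\U$ as the $\A$-Galerkin projection of the discrete cellwise solution. Concretely, $(\y_c,\z_c)$ solves
\[
\A((\y_c,\z_c),(\v,\u))=\adel_{\CTh}(\RT\uliney_h,\v)-(\z_{\CTh},\v)+(\y_{\CTh},\u)+\kappa(\nabla_{\CTh}\GT\ulinez_h,\nabla\u)
\]
for all $(\v,\u)\in\H$, with broken forms on the right. This is well posed by Lemma~\ref{Lemma 2.1}.

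The coercivity \eqref{a_coerc} then gives
\[
\|\nabla^2\ey_c\|^2+\|\nabla\ez_c\|^2\lesssim \A((\ey_c,\ez_c),(\v,\u))
\]
with $(\v,\u)=(\ey_c,\ez_c)$, normalised so that $\|(\v,\u)\|_{\H}\le 1$ after division. By \eqref{sum}, $\A((\y,\z),(\v,\u))=(\f,\u)$. Hence the right-hand side equals the residual
\begin{align*}
r(\v,\u)&:=(\f,\u)-(\nabla^2_{\CTh}\RT\uliney_h,\nabla^2\v)+(\z_{\CTh},\v)-(\y_{\CTh},\u)-\kappa(\nabla_{\CTh}\GT\ulinez_h,\nabla\u)\\
&\quad+\big[\text{terms }(\nabla^2_{\CTh}(\RT\uliney_h-\y_{\CTh}),\nabla^2\v)+\kappa(\nabla_{\CTh}(\GT\ulinez_h-\z_{\CTh}),\nabla \u)\big],
\end{align*}
where the bracket is actually zero by the choice above. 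I would keep the option of replacing $\RT\uliney_h$ by $\y_{\CTh}$ where convenient. The differences $\|\nabla^2(\RT\uliney_T-\y_T)\|_T\lesssim S^\Delta_{\pT}(\uliney_T,\uliney_T)^{1/2}$ and the analogous bound for $\GT$ come from \eqref{R1}, \eqref{G1} together with discrete trace/inverse estimates. The terms $\|\nabla^2_{\CTh}\ey_d\|,\|\ey_d\|,\|\nabla_{\CTh}\ez_d\|,\|\ez_d\|$ enter via the triangle inequality, since $\A$ applied to the broken error pieces is bounded by Cauchy--Schwarz and Poincaré.

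To bound $r(\v,\u)$, I use Galerkin orthogonality. Testing \eqref{discrete1}--\eqref{discrete2} with $(\mathcal{J}_{\V,h}^k\v,\mathcal{J}_{\U,h}^k\u)$ and subtracting, $r(\v,\u)=r(\v-\mathcal{I}^{k+2}_{\rm mBS}\v,\u-\mathcal{I}^{k}_{\rm mKM}\u)+{}$(stabilization contributions). The stabilization contributions are bounded by $S^\Delta_{\pT}(\uliney_T,\uliney_T)^{1/2}S^\Delta_{\pT}(\mathcal{J}\v,\mathcal{J}\v)^{1/2}$. The factor $S^\Delta_{\pT}(\mathcal{J}\v,\mathcal{J}\v)\lesssim (k+2)\|\nabla^2\v\|_T^2$ by \eqref{approx mbs}, with $h_T$ scaled by $(k+2)^{-2}$, and this is where the weights $(k+2)$ and $(k+1)\kappa$ arise. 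The identity \eqref{R1} with $\w=\RT\uliney_T$ reduces the reconstruction terms paired with $\mathcal{J}\v$ consistently.

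For the interpolation-error part, with $\xi:=\v-\mathcal{I}_{\rm mBS}^{k+2}\v$ and $\chi:=\u-\mathcal{I}_{\rm mKM}^k\u$, I integrate by parts twice cellwise on the Hessian term and once on the gradient term. This produces:
\begin{itemize}
\item volume residuals $(\z_{\CTh}-\Delta^2\RT\uliney_T,\xi)_T$ and $(\Pi^{k+1}_T\f+\y_{\CTh}+\kappa\Delta\GT\ulinez_T,\chi)_T$, plus $(\f-\Pi_T^{k+1}\f,\chi)_T$;
\item face terms with jumps $[\![\dn\Delta\RT]\!]\xi$, $[\![\dnn\RT]\!]\dn\xi$, $[\![\dnt\RT]\!]\dt\xi$ and $[\![\dn\GT]\!]\chi$.
\end{itemize}
Since $\xi,\chi$ are continuous and vanish on $\partial\O$, the boundary faces drop for the $\xi$ and $\chi$ traces. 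Cauchy--Schwarz with the $hp$-scaled bounds \eqref{approx mbs}, \eqref{approx mkm} turns these into $\boldsymbol{\eta}_{T,\res}$, $\boldsymbol{\eta}_{T,\nor}$ and $\mathcal{O}_T(\f)$, times $\|\nabla^2\v\|+\|\nabla\u\|$. The finite overlap of $\text{es}(T)$ handles the mKM stencil.

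The main obstacle is the normal–normal jump term $([\![\dnn\RT\uliney_T]\!],\dn\xi)_{F}$, which is not in $\boldsymbol{\eta}_{T,\nor}$. To handle it, I would first split $\dn\xi=\dn\v-\eta_F$-type differences using the face unknowns $\eta_F$ and $\PiFk(\n_F\cdot\nabla\v)$ in $\mathcal{J}_{\V,h}^k$. The term then combines with the corresponding $\eta$-part of \eqref{R1} tested against $\mathcal{J}\v$. After this, only $\Pi^k$-orthogonal remainders and stabilization terms $h_T^{-1}\|\Pi^k_{\pT}(\gamma_{\pT}-\dn\y_T)\|^2$ survive, which are bounded by $(k+2)S^\Delta_{\pT}$. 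If this cancellation fails to close, the fallback is to absorb it with $\|\nabla^2_{\CTh}\ey_d\|$ through a trace inequality \eqref{mt} on the polynomial difference $\RT\uliney_T-\y_c$.
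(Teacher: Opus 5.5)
Your treatment of the residual coincides with the paper's. You test with $(\mathcal{J}_{\V,h}^k\v,\mathcal{J}_{\U,h}^k\u)$, integrate by parts cellwise, and dispose of the $\dnn$ term exactly as in your primary plan. The defect $(\dnn\RT(\uliney_T),\dn\mathcal{I}^{k+2}_{\text{mBS}}\v-\Pi^k_{\pT}(\dn\v))_{\pT}$ produced by \eqref{R1} is missing from your formula $r(\v,\u)=r(\v-\mathcal{I}\v,\u-\mathcal{I}\u)+\dots$. It combines cellwise with $(\dnn\RT(\uliney_T),\dn\xi)_{\pT}$ into $(\dnn\RT(\uliney_T),\dn\v-\Pi^k_{\pT}(\dn\v))_{\pT}=0$, since $\dnn\RT(\uliney_T)|_F\in\P^k(F)$. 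So no $\dnn$-jump is ever formed, and the fallback is not needed.

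The genuine difference is the first step. The paper solves no auxiliary problem. For the given pair it writes $\|\nabla^2\ey_c\|^2+\kappa\|\nabla\ez_c\|^2$ as $(\nabla^2_{\CTh}\ey,\nabla^2\ey_c)-(\ez,\ey_c)+\kappa(\nabla_{\CTh}\ez,\nabla\ez_c)+(\ey,\ez_c)$ minus the same expression with $(\ey,\ez)$ replaced by $(\ey_d,\ez_d)$; the couplings $-(\ez_c,\ey_c)+(\ey_c,\ez_c)$ cancel. The subtracted terms are exactly the $\ey_d,\ez_d$ contributions in the statement. Because $\ey$ uses the cell unknowns, the terms $(\nabla^2(\RT(\uliney_T)-\y_T),\nabla^2\ey_c)_T$ and $\kappa(\nabla(\GT(\ulinez_T)-\z_T),\nabla\ez_c)_T$ appear. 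This is the only place $S^{\nabla}_{\pT}$ enters, since $S^{\nabla}_{\pT}(\ulinez_T,\mathcal{J}_{\U,h}^k\u)=0$; so your attribution of the $(k+1)\kappa$ weight is off.

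Your Galerkin projection $(\y_c^G,\z_c^G)$ instead makes $\A$ of the conforming error a pure residual, so $\ey_d$, $\ez_d$ and $S^{\nabla}_{\pT}$ never appear. That is legitimate only as an auxiliary object. The $(\y_c,\z_c)$ in \eqref{cderrors} are the patchwise minimizers on which Lemmas~\ref{non conforming error in z}, \ref{non conforming error in y} and Theorem~\ref{reliability} rely, and those lemmas say nothing about your projection.

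So the comparison step that your ``triangle inequality'' sentence only hints at must be made explicit. By \eqref{a_coerc}, $\|(\y_c^G-\y_c,\z_c^G-\z_c)\|_{\H}$ is bounded by the broken forms evaluated on $\RT\uliney_h-\y_c$, $\GT\ulinez_h-\z_c$, $\y_{\CTh}-\y_c$ and $\z_{\CTh}-\z_c$. Hence it is bounded by the $\ey_d,\ez_d$ norms plus $S^{\Delta}_{\pT}$ and $S^{\nabla}_{\pT}$. With that added, your route is correct. It separates a residual part, independent of how the conforming reconstruction is built, from a pure nonconformity part, at the price of an extra Lax--Milgram problem. The paper's one-line identity gives the same bound directly for any $(\y_c,\z_c)\in\V\times\U$.
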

 \begin{proof}
 	Firstly, consider
 	\begin{align}\label{eyc1}
 		(\nabla^2 \ey_c,\nabla^2 \ey_c) - ( \ez_c, \ey_c) & = (\nabla^2_{\CTh} \ey,\nabla^2 \ey_c) - ( \ez,  \ey_c) - (\nabla^2_{\CTh} \ey_d,\nabla^2 \ey_c) + ( \ez_d, \ey_c).
 	\end{align}
 	
 	Now, we bound first two terms on the right hand side in the following manner by adding and subtracting the global discrete formulation:
 	\begin{align}
 		\label{conferry}
 		(\nabla^2 \ey,\nabla^2 \ey_c) \hspace{-0.45mm}-\hspace{-0.45mm} ( \ez,  \ey_c) &\hspace{-0.45mm}= 0 - (\nabla^2_{\CTh} \y_{\CTh}, \nabla^2 \ey_c) + (\z_{\CTh},  \ey_c) \\
 		\nonumber &\hspace{-0.45mm}= \sum_{T\in\CTh}\Big( (\z_T,  \ey_c)_T - (\nabla^2 \y_{T}, \nabla^2 \ey_c)_T \\
 		\nonumber & \qquad  \quad - (\nabla^2 \RT (\uliney_T), \nabla^2 \ey_c)_T \hspace{-0.45mm}+\hspace{-0.45mm} (\nabla^2 \RT (\uliney_T), \nabla^2 \ey_c)_T \Big)\\
 		\nonumber
 		&\hspace{-0.45mm}= \hspace{-0.45mm}\sum_{T\in\CTh} \hspace{-0.95mm} \Big( \hspace{-0.45mm} (\z_T,  \ey_c \hspace{-0.45mm}-\hspace{-0.45mm} \w_T)_T \hspace{-0.45mm}-\hspace{-0.45mm} (\nabla^2 \RT (\uliney_T), \nabla^2 (\ey_c \hspace{-0.45mm}-\hspace{-0.45mm} \RT(\ulinew_T)\hspace{-0.45mm})_T \\
 		\notag &\qquad \quad + (\nabla^2 (\RT (\uliney_T) - \y_T), \nabla^2 \ey_c)_T + S_{\pT}^{\Delta}(\uliney_T, \ulinew_T) \Big).
 	\end{align}
 	
 	Using similar arguments, we derive
 		\begin{align}\label{ezc1}
 		\kappa(\nabla \ez_c,\nabla \ez_c) + ( \ey_c, \ez_c) & = \kappa(\nabla \ez,\nabla \ez_c) + ( \ey,  \ez_c) - \kappa (\nabla_{\CTh} \ez_d,\nabla \ez_c) - ( \ey_d, \ez_c),
 	\end{align}
 	and
 	\begin{align}\label{conferrz}
 		\kappa(\nabla_{\CTh} \ez, \nabla \ez_c) + ( \ey,  \ez_c) &= (\f, \ez_c) - (\y_{\CTh},  \ez_c) - \kappa(\nabla_{\CTh} \z_{\CTh}, \nabla \ez_c) \\
 		\notag &\hspace{-1cm}=  \sum_{T\in\CTh}\Big( (\f \hspace{-0.3mm}-\hspace{-0.3mm} \y_T,  \ez_c \hspace{-0.3mm}-\hspace{-0.3mm} \u_T)_T \hspace{-0.3mm}-\hspace{-0.3mm} \kappa (\nabla \GT (\ulinez_T), \nabla ( \ez_c \hspace{-0.3mm}-\hspace{-0.3mm} \GT(\ulineu_T))_T \\
 		\notag & \quad \ + \kappa (\nabla (\GT (\ulinez_T) - \z_T), \nabla\ez_c)_T + S_{\pT}^{\nabla}(\ulinez_T, \ulineu_T) \Big).
 	\end{align}
 	By summing \eqref{eyc1} and \eqref{ezc1}, we arrive at 
 	\begin{align}\label{conferrexp}
 		\|\nabla^2 \ey_c\|^2 \hspace{-0.3mm}+\hspace{-0.3mm} \kappa\|\nabla \ez_c\|^2 & \hspace{-0.3mm}=\hspace{-0.3mm} (\nabla^2 \ey,\nabla^2 \ey_c) - ( \ez,  \ey_c) + \kappa(\nabla \ez,\nabla \ez_c) + ( \ey,  \ez_c) \\
 		\nonumber &\quad  - (\nabla^2_{\CTh} \ey_d,\nabla^2 \ey_c) + ( \ez_d, \ey_c) -\kappa (\nabla_{\CTh} \ez_d,\nabla \ez_c) - ( \ey_d, \ez_c).
 	\end{align}
 	First four terms on the right hand side can be approximated using \eqref{conferry} and \eqref{conferrz} as 
 	\begin{align*}
 		&(\nabla^2 \ey,\nabla^2 \ey_c) - ( \ez,  \ey_c) + \kappa(\nabla \ez,\nabla \ez_c) + ( \ey,  \ez_c)  \\
 		&= \sum_{T\in\CTh}\Big( (\z_T,  \ey_c - \w_T)_T + (\f - \y_T,  \ez_c - \u_T)_T - (\nabla^2 \RT (\uliney_T), \nabla^2 (\ey_c - \RT(\ulinew_T))_T \\
 		& \qquad \qquad + (\nabla^2 (\RT (\uliney_T) - \y_T), \nabla^2 \ey_c)_T  - \kappa (\nabla \GT (\ulinez_T), \nabla ( \ez_c- \GT(\ulineu_T))_T \\
 		& \qquad \qquad  + \kappa (\nabla (\GT (\ulinez_T) - \z_T), \nabla\ez_c)_T + S_{\pT}^{\Delta}(\uliney_T, \ulinew_T) + S_{\pT}^{\nabla}(\ulinez_T, \ulineu_T) \Big).
 	\end{align*}
 	
 	Now, we choose 
 	\[
 	\ulinew_h = \mathcal{J}_{\V,h}^k(\ey_c) \in \Vhok, \qquad 
 	\ulineu_h = \mathcal{J}_{\U,h}^k(\ez_c)\in \Uhok,
 	\]
 	and set 
 	\[
 	\zeta:= \ey_c - \mathcal{I}_{\text{mBS}}^{k+2}(\ey_c), \qquad 
 	\Upsilon:= \ez_c - \mathcal{I}_{\text{mKM}}^{k}(\ez_c).
 	\]
 	
 	By the definition of reconstruction operators, we have 
 	\begin{align*}
 		(\nabla^2 \RT (\uliney_T), \nabla^2 (\ey_c - \RT(\ulinew_T))_T &=  (\nabla^2 \RT (\uliney_T), \nabla^2\zeta)_T\\
 		&\quad \ + (\dnn \RT(\uliney_T), \dn \mathcal{I}_{\text{mBS}}^{k+2}(\ey_c)- \Pi_{\pT}^k(\dn\ey_c) )_{\pT},\\
 		(\nabla \GT (\ulinez_T), \nabla ( \ez_c- \GT(\ulineu_T))_T &= (\nabla \GT (\ulinez_T), \nabla\Upsilon)_T.
 	\end{align*}

 	On substituting these choices, we further have
 	\begin{align*}
 		(\nabla^2_{\CTh} \ey,\nabla^2 \ey_c) \hspace{-0.3mm}-\hspace{-0.3mm} ( \ez,  \ey_c) \hspace{-0.3mm}+\hspace{-0.3mm} \kappa(\nabla_{\CTh} \ez,\nabla \ez_c) + ( \ey,  \ez_c) &\hspace{-0.3mm}=\hspace{-0.3mm} \sum_{T\in\CTh} \hspace{-0.8mm}\Big( \hspace{-0.3mm}(\z_T, \zeta)_T \hspace{-0.3mm}+\hspace{-0.3mm} (\f \hspace{-0.3mm}-\hspace{-0.3mm} \y_T, \Upsilon)_T  \\
 		& \hspace{-6.3cm} - (\nabla^2 \RT (\uliney_T), \nabla^2(\zeta))_T 
 		-(\dnn \RT(\uliney_T), \dn \mathcal{I}_{\text{mBS}}^{k+2}(\ey_c)- \Pi_{\pT}^k(\dn\ey_c) )_{\pT}\\
 	& \hspace{-6.3cm}	+ (\nabla^2 (\RT (\uliney_T) - \y_T), \nabla^2 \ey_c)_T  - \kappa (\nabla \GT (\ulinez_T), \nabla\Upsilon)_T \\
 	& \hspace{-6.3cm}	+ \kappa (\nabla (\GT (\ulinez_T) \hspace{-0.2mm} - \hspace{-0.2mm} \z_T), \nabla\ez_c)_T \hspace{-0.2mm} + \hspace{-0.2mm} S_{\pT}^{\Delta}(\uliney_T, \mathcal{J}_{\V,h}^k(\ey_c) ) \hspace{-0.2mm} + \hspace{-0.2mm} S_{\pT}^{\nabla}(\ulinez_T, \mathcal{J}_{\U,h}^k(\ez_c)) \Big).
 	\end{align*}
 	
  Using integration by parts and the fact that $S_{\pT}^{\nabla}(\ulinez_T, \mathcal{J}_{\U,h}^k(\ez_c))=0$,
  \begin{align*}
  	(\nabla^2_{\CTh} \ey,\nabla^2 \ey_c) - ( \ez,  \ey_c) + \kappa(\nabla_{\CTh} \ez,\nabla \ez_c) + ( \ey,  \ez_c) &= \sum_{T\in\CTh}\Big( (\z_T- \Delta^2 \RT (\uliney_T),  \zeta)_T  \\
  	&  \hspace{-4cm} + (\f - \y_T + \kappa\Delta \GT(\ulinez_T),  \Upsilon)_T - (\dn\Delta \RT (\uliney_T), \zeta)_{\pT^i} \\
   &  \hspace{-4cm}	- (\dnt \RT (\uliney_T), \dt\zeta)_{\pT^i} \hspace{-0.3mm}-\hspace{-0.3mm} (\dnn \RT(\uliney_T), \dn \ey_c \hspace{-0.3mm}-\hspace{-0.3mm} \Pi_{\pT}^k(\dn\ey_c) )_{\pT}\\
  	&  \hspace{-4cm} + (\nabla^2 (\RT (\uliney_T) \hspace{-0.3mm}-\hspace{-0.3mm} \y_T), \nabla^2 \ey_c)_T \hspace{-0.3mm}-\hspace{-0.3mm} \kappa (\dn \GT (\ulinez_T), \Upsilon)_{\pT^i} \\
  	&  \hspace{-4cm}  + \kappa (\nabla (\GT (\ulinez_T) - \z_T), \nabla\ez_c)_T + S_{\pT}^{\Delta}(\uliney_T, \mathcal{J}_{\V,h}^k(\ey_c) ) \Big).
  \end{align*}
 The term $(\dnn \RT(\uliney_T), \dn \ey_c- \Pi_{\pT}^k(\dn\ey_c) )_{\pT}$ vanishes using the definition of $L^2$-projection. Also, $\zeta,\ \dt \zeta$ and $\Upsilon$ are  single-valued on every mesh interface. Therefore, invoking Cauchy-Schwarz inequality, we conclude that
 \begin{align*}
 	&|(\nabla^2_{\CTh} \ey,\nabla^2 \ey_c) - ( \ez,  \ey_c) + \kappa(\nabla_{\CTh} \ez,\nabla \ez_c) + ( \ey,  \ez_c)| \\
 	& \lesssim \sum_{T\in\CTh}\Big( \| \z_T- \Delta^2 \RT (\uliney_T)\|_T \| \zeta\|_T + \|\f \hspace{-0.2mm} -\hspace{-0.2mm} \y_T + \kappa\Delta \GT(\ulinez_T)\|_T \|\Upsilon\|_T \\
     & \qquad \qquad  + \|\nabla^2 (\RT (\uliney_T) \hspace{-0.2mm} - \hspace{-0.2mm} \y_T)\|_T \|\nabla^2 \ey_c\|_T + \kappa \|\nabla (\GT (\ulinez_T) - \z_T)\|_T \|\nabla\ez_c\|_T  \\
 	  &  \qquad \qquad - \|[\![\dnt \RT (\uliney_T)]\!]\|_{\pT^i} \|\dt\zeta\|_{\pT^i} + \|[\![\dn\Delta \RT (\uliney_T)]\!]\|_{\pT^i} \|\zeta\|_{\pT^i}  \\
 	  & \qquad \qquad  + \kappa \|[\![\dn \GT (\ulinez_T)]\!]\|_{\pT^i} \|\Upsilon\|_{\pT^i} + S_{\pT}^{\Delta}(\uliney_T, \uliney_T )^{\frac{1}{2}}  S_{\pT}^{\Delta}(\mathcal{J}_{\V,h}^k(\ey_c), \mathcal{J}_{\V,h}^k(\ey_c) )^{\frac{1}{2}}  \Big).
 \end{align*}
 
 Using the approximation properties \eqref{approx mbs}, \eqref{approx mkm}, \cite[Lemma~3.1]{dong2025hperroranalysismixedorderhybrid} and \cite[Lemmas~3.1, 3.5]{dong2026hp}, we obtain the bounds
 \begin{align*}
 	\|\nabla (\GT (\ulinez_T) - \z_T)\|_T^2 &\lesssim S_{\pT}^{\nabla}(\ulinez_T, \ulinez_T), &&
 	\|\nabla^2 (\RT (\uliney_T) - \y_T)\|_T^2 \lesssim  S_{\pT}^{\Delta}(\uliney_T, \uliney_T),\\ S_{\pT}^{\Delta}(\mathcal{J}_{\V,h}^k(\ey_c), \mathcal{J}_{\V,h}^k(\ey_c) ) &\lesssim (k+2) \| \nabla^2 \ey_c\|_T^2,
 \end{align*}
 which further yield
 \begin{align*}
 	&|(\nabla_{\CTh}^2 \ey,\nabla^2 \ey_c) - ( \ez,  \ey_c) + \kappa(\nabla_{\CTh} \ez,\nabla \ez_c) + ( \ey,  \ez_c)| \\
 	& \lesssim \Bigg( \hspace{-0.5mm}\sum_{T\in\CTh}\Big( \hbar_T^4\| \z_T \hspace{-0.3mm}-\hspace{-0.3mm} \Delta^2 \RT (\uliney_T)\|_T^2 \hspace{-0.3mm}+\hspace{-0.3mm} \tilde h_T^2\|\f - \y_T \hspace{-0.3mm}+\hspace{-0.3mm} \kappa\Delta \GT(\ulinez_T)\|_T^2
 	+   \kappa S_{\pT}^{\nabla}(\ulinez_T, \ulinez_T )\\
 	 &\qquad \qquad \quad + \hbar_T^3\|[\![\dn\Delta \RT (\uliney_T)]\!]\|_{\pT^i}^2  +\hbar_T \|[\![\dnt \RT (\uliney_T)]\!]\|_{\pT^i}^2  + (k+2) S_{\pT}^{\Delta}(\uliney_T, \uliney_T ) \\
     & \qquad \qquad \quad + \kappa \tilde h_T \|[\![\dn \GT (\ulinez_T)]\!]\|_{\pT^i}^2   \Big)\Bigg)^{\frac{1}{2}}\big(\|\nabla^2 \ey_c\|^2+ \kappa \|\nabla\ez_c\|^2\big)^{\frac{1}{2}}.
 \end{align*}
 
 Using expression for $\boldsymbol{\eta}_{T,\res},\boldsymbol{\eta}_{T,\nor}$ and triangle inequality in second term, we get 
  \begin{align*}
 	&|(\nabla_{\CTh}^2 \ey,\nabla^2 \ey_c) - ( \ez,  \ey_c) + \kappa(\nabla_{\CTh} \ez,\nabla \ez_c) + ( \ey,  \ez_c)| \\
 	&\lesssim \Bigg( \sum_{T\in\CTh}\Big( \boldsymbol{\eta}_{T,\res}^2 + \boldsymbol{\eta}_{T,\nor}^2 +  \kappa S_{\pT}^{\nabla}(\ulinez_T, \ulinez_T ) + (k+2) S_{\pT}^{\Delta}(\uliney_T, \uliney_T ) \\
   &\qquad \qquad \quad + \mathcal{O}_T(\f)^2   \Big)\Bigg)^{\frac{1}{2}}\big(\|\nabla^2 \ey_c\|^2+ \kappa \|\nabla\ez_c\|^2\big)^{\frac{1}{2}}.
 \end{align*}
 
 The rest four terms in \eqref{conferrexp} are bounded using Cauchy-Schwarz inequality and Poincar\'e inequality. Combining this with the above estimate concludes the proof.
 \end{proof}
 \subsection{Bound for non-conforming error} 
 We start by defining conforming parts $\y_c\in \V$ and $\z_c\in \U$ of $\y_{\CTh}$ and $\z_{\CTh}$, respectively. The construction of $\z_c$ follows from the idea presented in \cite[Sec.~5.1.2]{dong2025hperroranalysismixedorderhybrid}. We solve a minimization problem in $H_0^1(\omega_{\a})$, where  $\omega_{\a}$ is the star associated with the vertex $\a$. Let $\CF_{\a}$ and $\bar\CF_{\a}$ denotes the set of mesh faces that contain the vertex $\a$ and the set of mesh faces on $\partial \omega_\a$ that do not contain $\a$, respectively. Let $\psi_{\a}$ be a hat function that is equal to $1$ at $\a$ and has support in $\omega_\a$. The hat function satisfies the partition of unity property
 \begin{equation*}
 	\sum_{\a\in \mathcal{V}_h} \psi_{\a} = 1 \quad \text{on } \Omega.
 \end{equation*}
 
 At each vertex patch $\omega_\a$, we define $\z_c^{\a} \in H_0^1(\omega_\a)$ as the unique solution of the minimization problem
 \begin{equation*}
 	\z_c^{\a} := \arg\min\limits_{\rho_{\a}\in H_0^1(\omega_a)} \|\nabla\rho_{\a}- \nabla_{\CTh}(\psi_{\a}\z_{\CTh})\|_{\omega_\a}.
 \end{equation*}
 
 Then, by extending $\z_c^{\a}$ by zero to $\Omega$, we set $ 	\z_c := \sum_{\a\in \mathcal{V}_h}\z_c^{\a} . $
	\begin{lemma}[Non-conforming error in $\z$] \label{non conforming error in z}The following estimate holds true:
		\begin{align}
			\|\nabla_{\CTh} \ez_d\| ^2 +  \|\ez_d\|^2 &\lesssim \sum_{T\in\CTh} S^{\nabla}_{\pT} (\ulinez_T, \ulinez_T) + \tilde{h}_T\| [\![\nabla\z_{\CTh}]\!]\times \n_T\|_{\pT}.
		\end{align}
	\end{lemma}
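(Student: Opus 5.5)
The estimate is local: the minimisation on each vertex patch is the standard device of \cite[Sec.~5.1.2]{dong2025hperroranalysismixedorderhybrid}. Since $\ez_d = \z_c - \z_{\CTh}$, we bound $\nabla_{\CTh}(\z_{\CTh}-\z_c)$ on each patch and then sum over vertices. The norm $\|\ez_d\|$ is handled last through a broken Poincar\'e (Friedrichs) inequality.

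\textbf{Partition of unity.} By $\sum_{\a}\psi_{\a}=1$ we have $\z_{\CTh}=\sum_{\a}\psi_{\a}\z_{\CTh}$. Hence
\[
\nabla_{\CTh}\ez_d=\sum_{\a}\big(\nabla\z_c^{\a}-\nabla_{\CTh}(\psi_{\a}\z_{\CTh})\big).
\]
Each element lies in only $\mathrm{card}$ of its vertices many patches, and this number is uniformly bounded by shape regularity. Therefore
\[
\|\nabla_{\CTh}\ez_d\|^2\lesssim\sum_{\a}\|\nabla\z_c^{\a}-\nabla_{\CTh}(\psi_{\a}\z_{\CTh})\|_{\omega_{\a}}^2 .
\]

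\textbf{Local bound on a patch.} The minimisation problem on $\omega_{\a}$ is the Helmholtz-type projection of the broken gradient $\nabla_{\CTh}(\psi_{\a}\z_{\CTh})$ onto gradients in $H_0^1(\omega_{\a})$. By the standard characterisation of the distance from a piecewise $H^1$ function to $H_0^1(\omega_{\a})$, in the spirit of Karakashian--Pascal and of the cited HHO work, this distance is controlled by jumps:
\[
\|\nabla\z_c^{\a}-\nabla_{\CTh}(\psi_{\a}\z_{\CTh})\|_{\omega_{\a}}^2\lesssim \sum_{F\in\CF_{\a}\cup\bar\CF_{\a}} h_F^{-1}\|[\![\psi_{\a}\z_{\CTh}]\!]\|_F^2 .
\]
On the outer faces $\bar\CF_{\a}$ the term $\psi_{\a}$ vanishes. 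The jumps on the interior faces $\CF_{\a}$, and on boundary faces, are bounded by $\|[\![\z_{\CTh}]\!]\|_F$ because $0\le\psi_{\a}\le1$.

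We prefer the sharper tangential form quoted in the statement. Split $[\![\z_{\CTh}]\!]$ into its face mean part, or its $\Pi_F^k$ part, and the remainder. The remainder has zero mean on $F$, so a face Poincar\'e inequality gives
\[
\|[\![\z_{\CTh}]\!]-\Pi_F^0[\![\z_{\CTh}]\!]\|_F\lesssim h_F\|\dt[\![\z_{\CTh}]\!]\|_F=h_F\|[\![\nabla\z_{\CTh}]\!]\times\n_T\|_F .
\]
The projected part satisfies $\Pi_F^k[\![\z_{\CTh}]\!]=\Pi_F^k(\z_{T_1}-\z_F)-\Pi_F^k(\z_{T_2}-\z_F)$ because $\z_F$ is single valued, and $\z_F=0$ on boundary faces. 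It is therefore bounded by $h_T^{1/2}S^{\nabla}_{\pT}(\ulinez_T,\ulinez_T)^{1/2}$ summed over the two neighbouring cells. Together these give $h_F^{-1}\|[\![\z_{\CTh}]\!]\|_F^2\lesssim S^{\nabla}_{\pT}+h_F\|[\![\nabla\z_{\CTh}]\!]\times\n_T\|_F^2$. The $hp$-scaled versions follow with $\tilde h_T$ in place of $h_T$, consistent with the redefined stabilisation.

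\textbf{$L^2$ part.} Apply the discrete Friedrichs inequality for broken $H^1$ functions, $\|\ez_d\|^2\lesssim\|\nabla_{\CTh}\ez_d\|^2+\sum_F h_F^{-1}\|[\![\ez_d]\!]\|_F^2$. Since $\z_c$ is conforming, the face jumps equal $[\![\z_{\CTh}]\!]$, which we have already bounded.

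\textbf{Main obstacle.} The delicate step is the patch estimate. We must make sure that its constant depends only on shape regularity, with no $p$-dependence beyond what the $\tilde h_T$ scaling absorbs. We must also check that the treatment of vertices on $\partial\Omega$ is consistent with $H_0^1(\omega_{\a})$. For the latter, we would invoke \cite[Sec.~5.1.2]{dong2025hperroranalysismixedorderhybrid} directly rather than reprove it.
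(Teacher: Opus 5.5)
Your argument is correct, with a constant that may depend on $k$ (the paper's convention for $\lesssim$ allows this), but it takes a different route from the paper.

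\textbf{The paper's route.} It writes $\nabla_{\CTh}\ez_{d,\a}=\nabla\xi+\mathbf{curl}\,\boldsymbol{\phi}$ on each vertex patch and imports the $H^1$ bound from \cite[Lemma~5.3]{dong2025hperroranalysismixedorderhybrid}. The minimisation property of $\z_c^{\a}$ makes the gradient part vanish, and the curl part produces the tangential-jump term. This route is designed to track the weights $\tilde h_T$ and the $(k+1)^2$-rescaled $S^{\nabla}_{\pT}$. The $L^2$ part is then handled patchwise with the broken Poincar\'e--Friedrichs inequality of \cite{MR4044442}.

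\textbf{Your route.} You bypass the Helmholtz decomposition. The best-approximation property of $\z_c^{\a}$ together with a Karakashian--Pascal averaging bound \cite{MR2034620} reduces everything to $\sum_F h_F^{-1}\|[\![\z_{\CTh}]\!]\|_F^2$. You then split the jump into two pieces. Its mean is controlled by $S^{\nabla}_{\pT}$ after inserting the single-valued $\z_F$, which vanishes on $\partial\O$. The zero-mean remainder is controlled by the tangential jump through a face Poincar\'e inequality. A global Brenner-type Friedrichs inequality then handles the $L^2$ part.

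\textbf{What each route buys.} Yours is more elementary and self-contained. Your explicit split also treats the jump terms more carefully than the paper does. In its $L^2$ step the paper bounds the full jump $\|[\![\z_{\CTh}]\!]\|_F$ by $S^{\nabla}_{\pT}$ alone. But $S^{\nabla}_{\pT}$ only sees $\Pi^k_{\pT}(\z_T-\z_{\pT})$, while $\z_T\in\P^{k+1}(T)$.

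What you give up is robustness in $k$. The averaging bound holds only because $\psi_{\a}\z_{\CTh}$ is piecewise polynomial; it fails for general broken $H^1$ functions, and its constant grows with the degree. Moreover, the zero-mean face Poincar\'e inequality yields the weight $h_F\approx(k+1)\tilde h_T$, not $\tilde h_T$. So your sentence that the $hp$-scaled version ``follows with $\tilde h_T$ in place of $h_T$'' is not justified. If a $k$-independent constant is intended, this is exactly where the Helmholtz argument is needed.

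\textbf{Boundary vertices.} Your concern here is unfounded. On faces of $\partial\O$ containing $\a$, the trace of $\psi_{\a}\z_T$ is bounded by $\|\z_T\|_F=\|[\![\z_{\CTh}]\!]\|_F$. The same split applies there because $\z_F=0$ on those faces.
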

\begin{proof}
	To prove this, first note that 
	\begin{equation}
		\ez_d = \z_c -\z_{\CTh} = \sum_{\a\in \mathcal{V}_h} \z_c^{\a} - \psi_\a \u_{\CTh} := \sum_{\a\in \mathcal{V}_h} \ez_{d,\a}.
	\end{equation}
	
	Since $\nabla_{\CTh} \ez_{d,\a}\in \L^2(\omega_\a)$, we invoke the Helmholtz decomposition 
	\begin{equation*}
		\nabla_{\CTh} \ez_{d,\a} = \nabla \xi + \textbf{curl}\boldsymbol{\phi}  \quad \text{in} \ \omega_\a,
	\end{equation*}
	where $\xi \in H_0^1(\omega_\a)$ and $\boldsymbol{\phi}\in \boldsymbol{H}^1(\omega_\a)$. Now, following the arguments developed in \cite[Lemma~5.3]{dong2025hperroranalysismixedorderhybrid}, we obtain
\begin{align} \label{H1 nonconferr in z}
	\| \nabla_{\CTh} \ez_{d,\a} \|^2_{\omega_a} \lesssim \sum_{T\subset \omega_\a} S^{\nabla}_{\pT}(\ulinez_T, \ulinez_T) + \tilde{h}_T\| [\![ \nabla \z_{\CTh}]\!] \times \n_T\|_{\pT}^{2}.
	\end{align}
	
	The bound for $	\|\ez_{d,\a} \|^2_{\omega_a}$ follows from the broken Poincar\'e-Friedrichs inequality \cite[Corollary~4.1]{MR4044442} (see \cite{MR1974504} for more details)
	\begin{align*}
		\|\ez_{d,\a}\|_{\omega_\a} \lesssim  h_{\omega_\a}\Bigg(\|\nabla_{\CTh} \ez_{d,\a}\|_{\omega_\a} + \Bigg(\sum_{F\in\CF_{\a}}h_F^{-1}\|\Pi_{\pT}^0 [\![ \ez_{d,\a}]\!]\|^2_{F} \Bigg)^{{1}/{2}}\Bigg).
	\end{align*}
	
	Using the stability of orthogonal $L^2$-projection, mesh shape regularity and the bound $\|\psi_\a\|_{\infty, \omega_a}\le 1$, we deduce that 
		\begin{align} \label{L2 nonconferr in z}
	\sum_{\a \in \mathcal{V}_h}	\|\ez_{d,\a}\|_{\omega_\a} &\lesssim  \sum_{\a \in \mathcal
		{V}_h} h_{\omega_\a}\Bigg(\|\nabla_{\CTh} \ez_{d,\a}\|_{\omega_\a} + \Bigg(\sum_{F \in \CF_{\a}}h_F^{-1}\| [\![ \z_{\CTh}]\!]\|^2_{F} \Bigg)^{\frac{1}{2}}\Bigg).\\
       	& \lesssim   \sum_{\a \in \mathcal{V}_h}h_{\omega_\a}\|\nabla_{\CTh} \ez_{d,\a}\|_{\omega_\a} +  \sum_{T\in \CTh} S^{\nabla}_{\pT}(\ulinez_T, \ulinez_T)^{\frac{1}{2}}, \notag
	\end{align}
	where in the last step, after adding and subtracting $\z_F$, we have used triangle inequality  followed by definition of $\S^{\nabla}(\cdot, \cdot)$. Since, 
	$$	\|\nabla_{\CTh} \ez_d\|^2  +  \|\ez_d\|^2  \le (d+1)\sum\limits_{\a \in \mathcal{V}_h}( 	\|\nabla_{\CTh} \ez_{d,\a}\|^2_{\omega_\a} + 	\|\ez_{d,\a}\|^2_{\omega_\a} ).$$
	
	 Therefore, combining this with bounds \eqref{H1 nonconferr in z} and \eqref{L2 nonconferr in z} concludes the proof.
\end{proof}

Now, to bound the non-conforming error $\ey_d$, we follow the idea developed in \cite[Sec.~5.3]{dong2026hp}. For this, we consider $C^1$-composite finite element with polynomial order five on the Alfeld split of the mesh $\CTh$ introduced in \cite{Walk14}. Since, the Alfeld split consists of subdividing each mesh cell $T\in\CTh$ into $(d+1)$ sub-simplices by connecting each vertex of $T$ to its centroid. 

We denote a generic sub-cell obtained in this manner by $\tilde{T}$, and the global resulting sub-mesh by $\ACTh$. Let $\phi_{\a}$ denote the $C^1$-basis function associated with vertex $\a\in \mathcal{V}_h$ and having support in $\omega_\a$. Also, let $\phi_T$ be the basis function associated with $T$ and having support inside $T$. These basis functions satisfy the following property:
\begin{equation}\label{pouy}
	\sum_{\a \in \mathcal{V}_h} \phi_{\a} + \sum_{T \in \CTh} \phi_{T} = 1 \quad \text{on } \Omega.
\end{equation}
 At each vertex patch $\omega_\a$, we define $\y_c^{\a} \in H_0^2(\omega_\a)$ as the unique solution of the minimization problem
\begin{equation*}
	\y_c^{\a} := \arg\min\limits_{\rho_{\a}\in H_0^2(\omega_a)} \|\nabla^2\rho_{\a}- \nabla^2_{\CTh}(\phi_{\a}\y_{\CTh})\|_{\omega_\a}.
\end{equation*}
Then by extending $\y_c^{\a}$ by zero to $\Omega$, we set 
\begin{equation}\label{yc construction}
	\y_c := \sum_{\a \in \mathcal{V}_h} \y_c^{\a} + \sum_{T \in \CTh} \phi_{T} \y_{\CTh} \in H_0^2(\Omega) \subset \V.
\end{equation}
{To bound the non-conforming error in $\y$, we use the following Poincar\'e type inequality obtained from Poincar\'e inequality for piecewise $H^1$-functions, combined with an integration by parts argument and a subsequent use of the broken Poincar\'e inequality.}
\begin{remark}\label{poincareH2}
Let $\v\in \{ \v \in L^2(\omega_{\a}) \ |\ \v|_T \in H^2(T),\ v|_F=0\ \  \forall F\in \bar{\CF}_{\a} \backslash \CF_{\a}\}$. Then the following holds:
\begin{align} 
	\|\v\|^2_{\omega_{\a}} \lesssim \sum_{T\subset \omega_{\a}}\|\nabla^2\v\|^2_{T} + \sum_{F\in \CF_{\a}}\Big( h_F^{-1} \|[\![\dn \v]\!]\|_F^2 + h_F^{-3} \|[\![\v]\!]\|_F^2  \Big).
\end{align}
\end{remark}
	\begin{lemma}[Non-conforming error in $\y$]\label{non conforming error in y} It holds that, 
	\begin{align*}
			\|\nabla^2_{\CTh} \ey_d\|^2 + \|\ey_d\|^2 & \lesssim \sum_{T \in \CTh} \Big( \hbar_T\big( \| [\![\dnt \y_{\CTh}]\!] \|_{\pT}^2 \hspace{-0.35mm}+\hspace{-0.35mm} \| [\![\dtt \y_{\CTh}]\!]  \|_{\pT}^2\big) \\
			&\qquad \quad+ S^{\Delta}_{\pT}(\uliney_T,\uliney_T) +  \big( \hbar_T\big)^{-\frac{1}{2}} \|\dn\y_{\CTh}\|_{\pT^b}\Big).
	\end{align*}
	\end{lemma}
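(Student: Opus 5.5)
We will mimic the proof of Lemma~\ref{non conforming error in z}, with the hat functions $\psi_{\a}$ replaced by the $C^1$ Alfeld basis functions $\phi_{\a},\phi_T$ and $H^1$ arguments replaced by $H^2$ ones, following \cite[Sec.~5.3]{dong2026hp}.

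\textbf{Step 1 (splitting the error).} By the partition of unity \eqref{pouy} and the construction \eqref{yc construction},
\[
\ey_d = \y_c - \y_{\CTh} = \sum_{\a\in\mathcal{V}_h}\big(\y_c^{\a} - \phi_{\a}\y_{\CTh}\big) =: \sum_{\a\in\mathcal{V}_h} \ey_{d,\a}.
\]
The terms $\phi_T\y_{\CTh}$ cancel exactly, since $\phi_T$ is $C^1$ with support in $T$. Each $\ey_{d,\a}$ is supported in $\omega_{\a}$, and only finitely many patches overlap. Hence
\[
\|\nabla^2_{\CTh}\ey_d\|^2+\|\ey_d\|^2 \lesssim \sum_{\a}\big(\|\nabla^2_{\CTh}\ey_{d,\a}\|^2_{\omega_{\a}}+\|\ey_{d,\a}\|^2_{\omega_{\a}}\big),
\]
and it suffices to bound each patch contribution.

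\textbf{Step 2 (Hessian bound on a patch).} $\y_c^{\a}$ is the $H_0^2(\omega_{\a})$ best approximation of the broken Hessian of $\phi_{\a}\y_{\CTh}$. So $\|\nabla^2_{\CTh}\ey_{d,\a}\|_{\omega_{\a}}$ is the distance of $\nabla^2_{\CTh}(\phi_{\a}\y_{\CTh})$ from $\nabla^2 H_0^2(\omega_{\a})$. We characterize this distance by duality: write the broken Hessian through a Helmholtz-type decomposition of symmetric matrix fields, $\nabla^2 \xi + $ a divergence-free (row-wise curl) part, with $\xi\in H_0^2(\omega_{\a})$. The distance then equals a supremum of $(\nabla^2_{\CTh}(\phi_{\a}\y_{\CTh}),\boldsymbol{\tau})$ over test fields $\boldsymbol{\tau}$ orthogonal to $\nabla^2 H_0^2(\omega_{\a})$. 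Integrating by parts elementwise, only jumps of $\phi_{\a}\y_{\CTh}$ and of its gradient across faces survive.
\begin{itemize}
\item Tangential second-derivative jumps: since $\phi_{\a}$ is $C^1$, these produce $[\![\dtt\y_{\CTh}]\!]$ and $[\![\dnt\y_{\CTh}]\!]$ scaled by $\hbar_T^{1/2}$.
\item Jumps of $\y_{\CTh}$ itself: write $\y_T-\y_F$ and use $\y_F$ single-valued (zero on boundary faces); this gives the $h_T^{-3}$ part of $S^{\Delta}_{\pT}$.
\item Normal-derivative jumps: insert $\eta_F$, split $\dn\y_T-\eta_{\pT}$ into its $\Pi^k_{\pT}$ part, which is the stabilization, and a projection remainder, which is controlled by tangential jumps and inverse inequalities as in \cite[Lemma~5.6]{dong2026hp}.
\item Boundary faces: the normal derivative is not prescribed by $\eta_F$ there, since $\y_c\in H_0^2$. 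This produces the term $\hbar_T^{-1/2}\|\dn\y_{\CTh}\|_{\pT^b}$.
\end{itemize}
Factors with $\nabla\phi_{\a}$ and $\nabla^2\phi_{\a}$ scale like $h^{-1}$ and $h^{-2}$. They multiply $\y_{\CTh}$ or its gradient and are absorbed by local Poincaré arguments (subtracting an affine function on the patch, which $\phi_{\a}$-weighted jumps do not see).

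\textbf{Step 3 (the $L^2$ part).} $\ey_{d,\a}$ vanishes on the faces of $\partial\omega_{\a}$ away from $\a$, so it lies in the space of Remark~\ref{poincareH2}. That remark bounds $\|\ey_{d,\a}\|_{\omega_{\a}}$ by the patch Hessian, already estimated in Step 2, plus the weighted jumps $h_F^{-1}\|[\![\dn\ey_{d,\a}]\!]\|_F^2+h_F^{-3}\|[\![\ey_{d,\a}]\!]\|_F^2$. Since $\y_c^{\a}$ is smooth, these are jumps of $\phi_{\a}\y_{\CTh}$ only, and they are bounded by the stabilization and boundary terms exactly as in Step 2. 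Summing over vertices with finite overlap gives the claim.

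\textbf{Main obstacle.} The hardest part is Step 2: building the $H^2$-Helmholtz duality argument on a vertex patch and controlling the normal-derivative jump through $\Pi^k_{\pT}$ only, since the stabilization sees just the projected normal jump. I would first try to reproduce \cite[Lemma~5.7]{dong2026hp} verbatim, treating $\y_{\CTh}$ as the cell unknown there.
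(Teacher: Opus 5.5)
Your proposal is correct and follows the paper's proof. It uses the same partition-of-unity splitting $\ey_d=\sum_{\a}\ey_{d,\a}$, the same patchwise Hessian bound via the Helmholtz-decomposition argument of \cite{dong2026hp} (the paper just cites its Lemma~5.4), and the same $L^2$ bound via Remark~\ref{poincareH2} combined with the jump arguments of Lemma~\ref{non conforming error in z}. Your version is in fact more explicit than the paper's in two places: you control the $(I-\Pi^k_{\pT})$ remainder of $[\![\dn\y_{\CTh}]\!]$ by the tangential jumps, and you note that the boundary term already enters the Hessian bound, because the patch problem in $H_0^2(\omega_{\a})$ forces a vanishing normal derivative on $\partial\Omega$, where $\eta_F$ is free; the paper's intermediate display omits this term, and your argument also produces it in its correctly squared form $\hbar_T^{-1}\|\dn\y_{\CTh}\|_{\pT^b}^2$.
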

	\begin{proof}
		Firstly, using the partition of unity \eqref{pouy}, we get
		\begin{equation*}
			\ey_d = \y_c -\y_{\CTh} = \sum_{\a\in \mathcal{V}_h} \y_c^{\a} - \phi_\a \y_{\CTh} =: \sum_{\a\in \mathcal{V}_h} \ey_{d,\a}.
		\end{equation*}
		
	Now, it follows from \cite[Lemma~5.4]{dong2026hp} by invoking Helmholtz decomposition that 
			\begin{align}
			\|\nabla^2_{\CTh} \ey_{d,\a}\|^2 &\lesssim \sum_{T \subset \omega_\a}  \hbar_T\big( \| [\![\dnt \y_{\CTh}]\!] \|_{\pT}^2   + \| [\![\dtt \y_{\CTh}]\!]  \|_{\pT}^2\big) + S^{\Delta}_{\pT}(\uliney_T,\uliney_T).
		\end{align}
	{{The bound for  $\|\ey_{d,\a}\|^2$ follows by Remark~\ref{poincareH2} and arguments developed in Lemma~\ref{non conforming error in z}. By summing these bounds over each vertex $\a \in \mathcal{V}_h$, we conclude the proof}}.
	\end{proof}
	\begin{theorem}[Reliability]\label{reliability}
		For  $k\ge1$, the following estimate holds true
		\begin{align}\label{main_result}
			\|\nabla_{\CTh}^2\ey\|^2 + \|\nabla_{\CTh} \ez\|^2 \lesssim (\e_{\boldsymbol{\eta}})^2,
		\end{align}
		where the global estimator is defined as:
		\[
		(\e_{\boldsymbol{\eta}})^2 := \sum_{T\in \CTh} \Big( \boldsymbol{\eta}^2_{T,\res} + \boldsymbol{\eta}^2_{T,\nor} + \boldsymbol{\eta}^2_{T,\tan} + (k+2)\boldsymbol{\eta}^2_{T,\sta} +  O_T^2(\f) \Big).
		\]
	\end{theorem}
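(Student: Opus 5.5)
The plan is to combine the three preceding lemmas through the conforming/non-conforming splitting \eqref{cderrors}. Since $\ey = \ey_c + \ey_d$ and $\ez = \ez_c + \ez_d$, the triangle inequality gives
\begin{align*}
\|\nabla_{\CTh}^2\ey\|^2 + \|\nabla_{\CTh}\ez\|^2 \lesssim \|\nabla^2 \ey_c\|^2 + \|\nabla \ez_c\|^2 + \|\nabla^2_{\CTh}\ey_d\|^2 + \|\nabla_{\CTh}\ez_d\|^2 .
\end{align*}
Here we use that $\ey_c\in\V$ and $\ez_c\in\U$, so their broken derivatives coincide with the global ones.

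For the conforming part we invoke Lemma~\ref{conferr in y and z}. Its right-hand side contains $\boldsymbol{\eta}_{T,\res}^2$, $\boldsymbol{\eta}_{T,\nor}^2$ and $\mathcal{O}_T(\f)^2$, which already appear in $(\e_{\boldsymbol{\eta}})^2$. It also contains $(k+2)S^{\Delta}_{\pT}(\uliney_T,\uliney_T)$ and $(k+1)\kappa S^{\nabla}_{\pT}(\ulinez_T,\ulinez_T)$. Both are bounded by $(k+2)\boldsymbol{\eta}_{T,\sta}^2$, up to the constant $\kappa$, which is absorbed into $\lesssim$ since $\kappa$ is a fixed positive constant. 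The remaining terms are the non-conforming quantities $\|\nabla^2_{\CTh}\ey_d\|^2+\|\ey_d\|^2+\|\nabla_{\CTh}\ez_d\|^2+\|\ez_d\|^2$, and these reduce the whole estimate to bounding the non-conforming errors, including their $L^2$ parts.

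Those are handled by Lemma~\ref{non conforming error in z} and Lemma~\ref{non conforming error in y}. Lemma~\ref{non conforming error in z} bounds the $\z$-part by $S^{\nabla}_{\pT}(\ulinez_T,\ulinez_T)$ and $\tilde h_T\|[\![\nabla\z_{\CTh}]\!]\times\n_T\|^2_{\pT}$. The jump term is controlled by $\boldsymbol{\eta}_{T,\tan}^2$ because $\tilde h_T\le 2\hbar_T$ for $k\ge1$. Lemma~\ref{non conforming error in y} bounds the $\y$-part by $\hbar_T(\|[\![\dnt\y_{\CTh}]\!]\|^2_{\pT}+\|[\![\dtt\y_{\CTh}]\!]\|^2_{\pT})$, which is part of $\boldsymbol{\eta}_{T,\tan}^2$. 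It also contributes $S^{\Delta}_{\pT}(\uliney_T,\uliney_T)$ and the boundary term $\hbar_T^{-1}\|\dn\y_{\CTh}\|^2_{\pT^b}$, and both of these are contained in $\boldsymbol{\eta}_{T,\sta}^2$. Since $k+2\ge1$, all these stabilization contributions are absorbed by the weight $(k+2)$ placed in front of $\boldsymbol{\eta}^2_{T,\sta}$ in the estimator. Summing over $T\in\CTh$ then yields \eqref{main_result}.

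The only delicate point is bookkeeping rather than analysis. The statements of the lemmas are written loosely: some norms appear unsquared, such as the jump and boundary terms. I would read them as squared, consistent with the definitions of $\boldsymbol{\eta}_{T,\tan}$ and $\boldsymbol{\eta}_{T,\sta}$, and check that every $hp$-weight ($\hbar_T$ versus $\tilde h_T$, and the factors $k+1$ versus $k+2$) matches up to generic constants. Once these are squared consistently, no further estimate is needed; the theorem is the direct combination of the three preceding lemmas via the triangle inequality.
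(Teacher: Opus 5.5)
Your proposal is correct and follows the paper's proof, which likewise combines Lemmas~\ref{conferr in y and z}, \ref{non conforming error in z} and \ref{non conforming error in y} through the splitting \eqref{cderrors} and the triangle inequality. Your bookkeeping only makes explicit what the paper leaves implicit: absorbing $(k+1)\kappa S^{\nabla}_{\pT}$ and $S^{\Delta}_{\pT}$ into $(k+2)\boldsymbol{\eta}_{T,\sta}^2$, and using $\tilde h_T\le 2\hbar_T$ for the tangential jump of $\z_{\CTh}$.
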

	\begin{proof}
		The proof follows by combining Lemmas~ \ref{conferr in y and z}, \ref{non conforming error in z} and \ref{non conforming error in y}.
	\end{proof}
	\begin{remark} The assumption $k\ge1$ in Sec.~\ref{Interpolation operators and estimator} is needed to invoke the modified Karkulik-Melenk interpolation operator. For the case $k=0$, we can use nodal averaging operator {\cite[Thm.~2.2]{MR2034620}} to bound the non-conforming error in $\z$.
	\end{remark}
	\begin{remark}
		The a posteriori error estimates discussed in Sec.~\ref{A posteriori error estimates} can be extended for the Cahn-Hilliard type boundary conditions by suitable modification in the construction of conforming parts and $hp$-interpolation operators. For each vertex $\a\in \mathcal{V}_h$, we define 
		\begin{subequations}
			\begin{align*}
			\z_c^\a &:= \arg \min_{{\rho_\a} \in H^1_{\star}(\omega_\a)} \| \nabla \rho_\a -\nabla_{\CTh}(\psi_{\a}\z_{\CTh})\|_{\omega_\a},\\
			\y_c^\a &:= \arg \min_{{\rho_\a} \in H^2_{\star}(\omega_\a)} \| \nabla^2 \rho_\a -\nabla^2_{\CTh}(\phi_{\a}\y_{\CTh})\|_{\omega_\a},
		\end{align*}
		\end{subequations}
		where
		\begin{subequations}
			\begin{align*}
				H^1_{\star}(\omega_\a) &: = \{\v \in H^1(\omega_\a)\ |\ \v|_{\partial\omega_\a \cap \Omega} = 0\}\\
				H^2_{\star}(\omega_\a) &: = \{\v \in H^2(\omega_\a)\ |\ \dn\v|_{\partial\omega_\a} = 0,\ \v|_{\partial\omega_\a \cap \Omega} = 0\}.
			\end{align*}
		\end{subequations}
			Then, by extending $\y_c^\a$ and $\z_c^\a$ to zero outside $\omega_\a$, we set 
			\begin{align*}
				\z_c &:=\tilde\z_c - |\Omega|^{-1}(\tilde\z_c,1), &&	\y_c :=\tilde\y_c - |\Omega|^{-1}(\tilde\y_c,1),
			\end{align*}
		where 
		\begin{align*}
			\tilde\z_c &:=\sum_{\a \in \mathcal{V}_h}\z_c^\a, &&\tilde\y_c := \sum_{\a \in \mathcal{V}_h} \y_c^\a +  \sum_{T \in \CTh}\phi_T\y_{\CTh}.
		\end{align*}	
		
	\end{remark}
	\section{Numerical experiments}\label{Numerical experiments}
	In this section, we present some numerical examples to validate the theoretical results derived in Secs.~\ref{HHO Scheme}, \ref{C0 HHO Scheme} and \ref{A posteriori error estimates}. The computational domain $\O$ is partitioned into a mesh $\mathcal{T}_h$ composed of rectangles and polygonal elements for the fully non-conforming HHO method, and a simplicial discretization consisting of triangles for $C^0$-conforming HHO scheme. The performance of two approaches is investigated for both boundary conditions, allowing for a comparative assessment of the accuracy, robustness, and computational efficiency across varying mesh configurations. We denote the errors as:
	\begin{align*}
		\e_{Z} &:=  \left(\sum_{T\in\CTh} (\|\nabla^2(\y-\RT(\uliney_T))\|_T^2 + \|\nabla(\z-\GT(\ulinez_T))\|_T^2) \right)^{1/2} ,\\
		\e_{Z} & := \left(\sum_{T\in\CTh} \big(\|\nabla^2(\y-\RTc(\uliney_T))\|_T^2 + \|\nabla(\z-\z_T)\|_T^2\big) \right)^{1/2},
	\end{align*}
	for fully non-conforming HHO and $C^0$-HHO schemes, respectively.
	\subsection{Accuracy test}\label{Example 5.1}
	\textbf{Case~1:} In the first example, we consider the unit square domain $\Omega=(0,1)^2$ with $\kappa = 1, 0.001, 0.00001$ and design the source function $\f$ in such a way that the analytical solution is
$
		\y(\x) = x_1^{6}(1-x_1)^{6}x_2^{6}(1-x_2)^{6},
$
	and impose the simply supported boundary conditions \eqref{boundcndn1} for both the unknowns. 
	
	\textbf{Case~2:} Next, consider the second type of boundary conditions \eqref{boundcndn2} and manufacture the load $\f$ such that
$
		\y(\x) =  \beta(\sin^{4}(2 \pi x_1) \sin^{4}(2 \pi x_2)-9/64),
$
	where the parameter $\beta = 10^{-6}$ is a re-scaled constant and the mobility coefficient $\kappa = 1, 0.01, 0.0001$. 
	
	For Case~1 (Figs.~\ref{fig:simplyrectkappa1}, \ref{fig:simplyrectkappa2}, \ref{fig:simplyrectkappa3}, \ref{fig:simplycokappa1}, \ref{fig:simplycokappa2} and \ref{fig:simplycokappa3}) and Case~2 (Figs.~\ref{fig:cahnHHOkappa1}, \ref{fig:cahnHHOkappa2}, \ref{fig:cahnHHOkappa3}, \ref{fig:cahncokappa1}, \ref{fig:cahncokappa2} and \ref{fig:cahncokappa3}), we observe that the proposed schemes presented in Sec.~\ref{HHO Scheme} and Sec.~\ref{C0 HHO Scheme} exhibit a robust and consistent performance across the considered values of mobility parameter $\kappa$. In particular, both schemes attain the expected convergence of $O(h)$ and $O(h^2)$ for the primary variables with polynomial degrees $k=0$ and $k=1$, respectively. These results remain essentially unaffected by variations in $\kappa$, demonstrating the robustness of the methods in low-mobility regimes. Moreover, Figs.~\ref{fig:simplyrectkappa1}, \ref{fig:simplyrectkappa2} and \ref{fig:simplyrectkappa3} reveal a close correspondence between the estimator $\e_{\boldsymbol{\eta}}$ and the error $\e_{Z}$, as indicated by their nearly parallel alignment. The effectivity index in Figs.~\ref{fig:effrectkappa1}, \ref{fig:effrectkappa2} and \ref{fig:effrectkappa3} remains stable across all values of $\kappa$, providing further numerical evidence for the reliability and parameter robustness of the proposed estimator.

	\begin{figure}
	\centering
		\centering
		\subfloat[ Error history \label{fig:simplyrectkappa1}]{\includegraphics[width=0.49\textwidth]{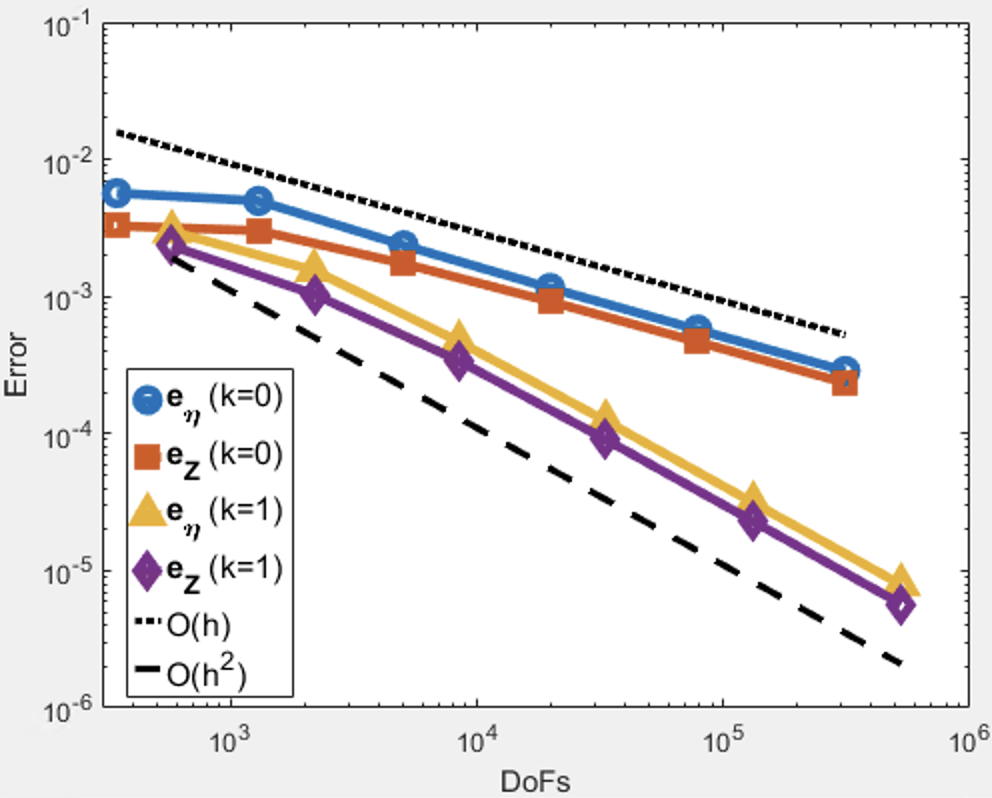}}
		\hfill
		\subfloat[Effectivity index\label{fig:effrectkappa1}]{\includegraphics[width=0.49\textwidth]{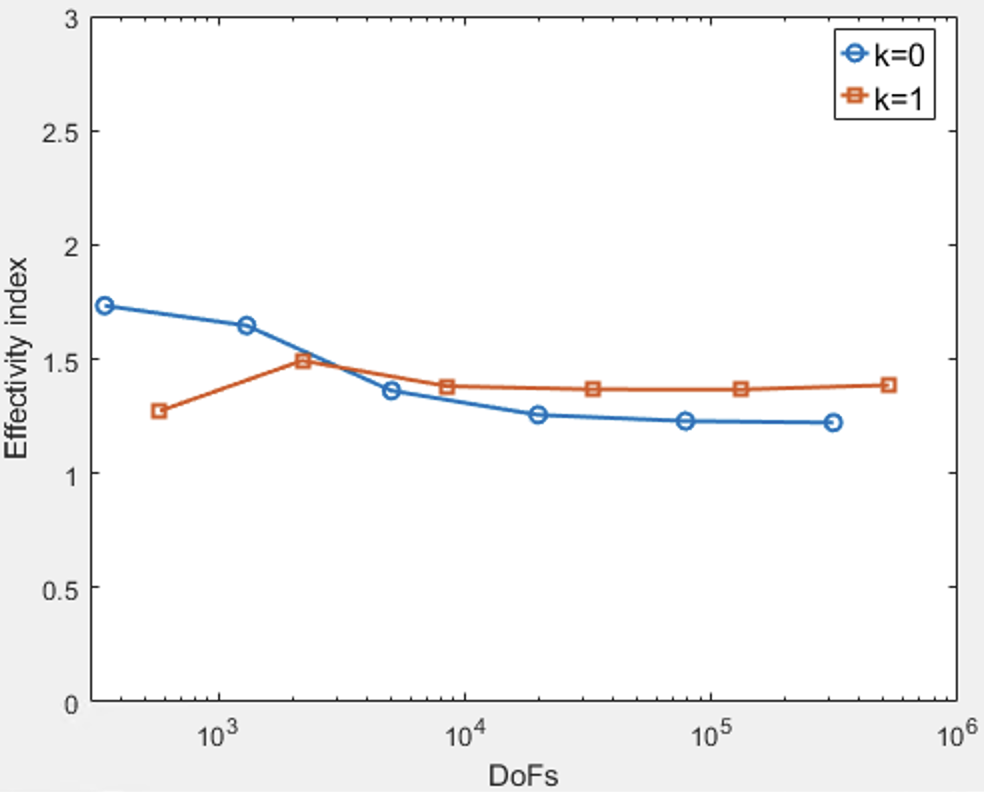}}\\
		\subfloat[ $\y_{\mathcal{T}_h}$\label{fig:simplyrectkappa1Yh}]{\includegraphics[width=0.49\textwidth]{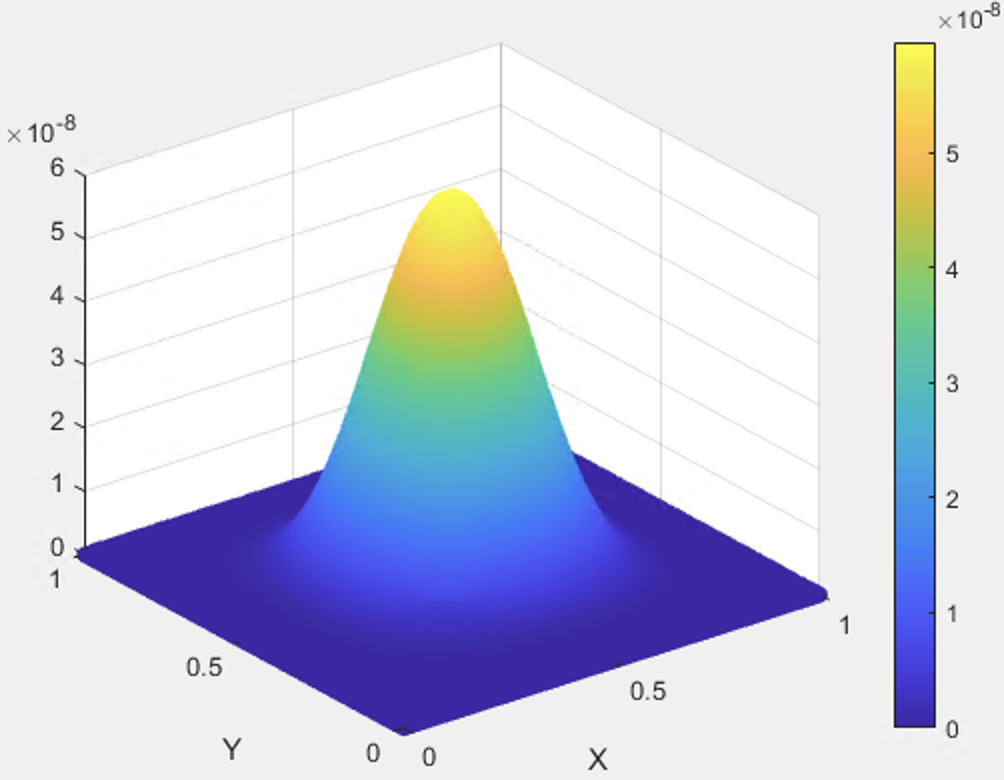}}
		\hfill
		\subfloat[ $\z_{\mathcal{T}_h}$\label{fig:simplyrectkappa1Zh}]{\includegraphics[width=0.49\textwidth]{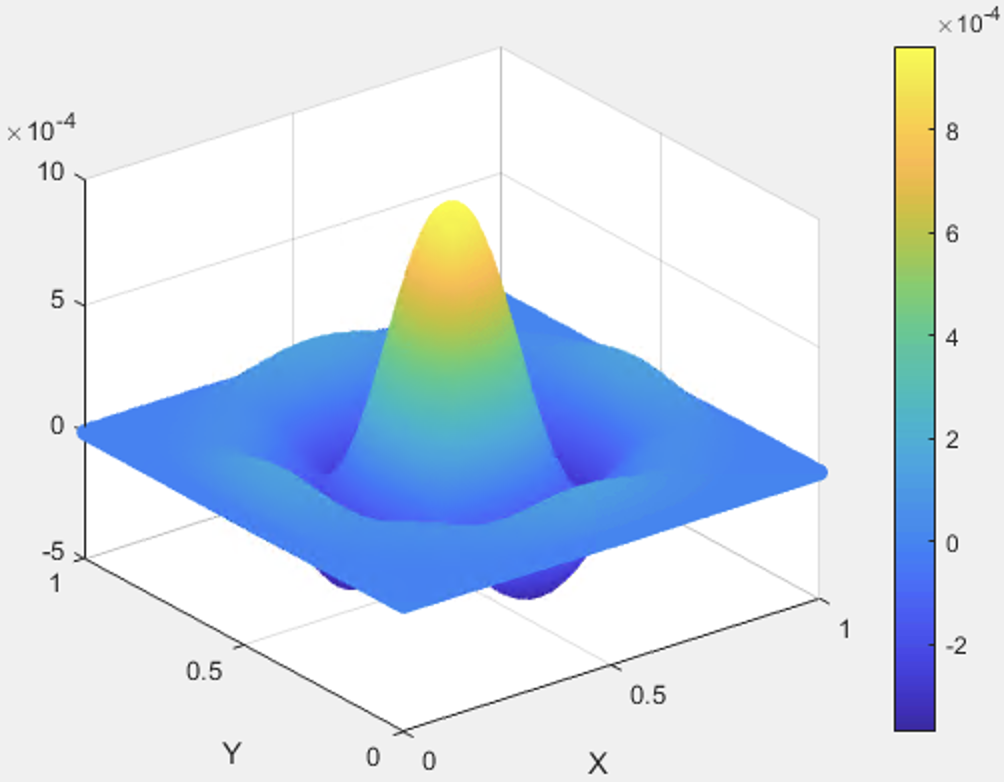}}
	\vspace{-0.5mm}
	\caption{Example~\ref{Example 5.1} (Case~1): Results for $\kappa = 1$ using non-conforming HHO method and approximate solutions for $k=1$ on a rectangular mesh with $1024$ elements.}
	\label{FIGURE ex1_HHO_case1}
\end{figure}
	\begin{figure}
	\centering
	\subfloat[ Error history\label{fig:simplyrectkappa2}]{\includegraphics[width=0.49\textwidth]{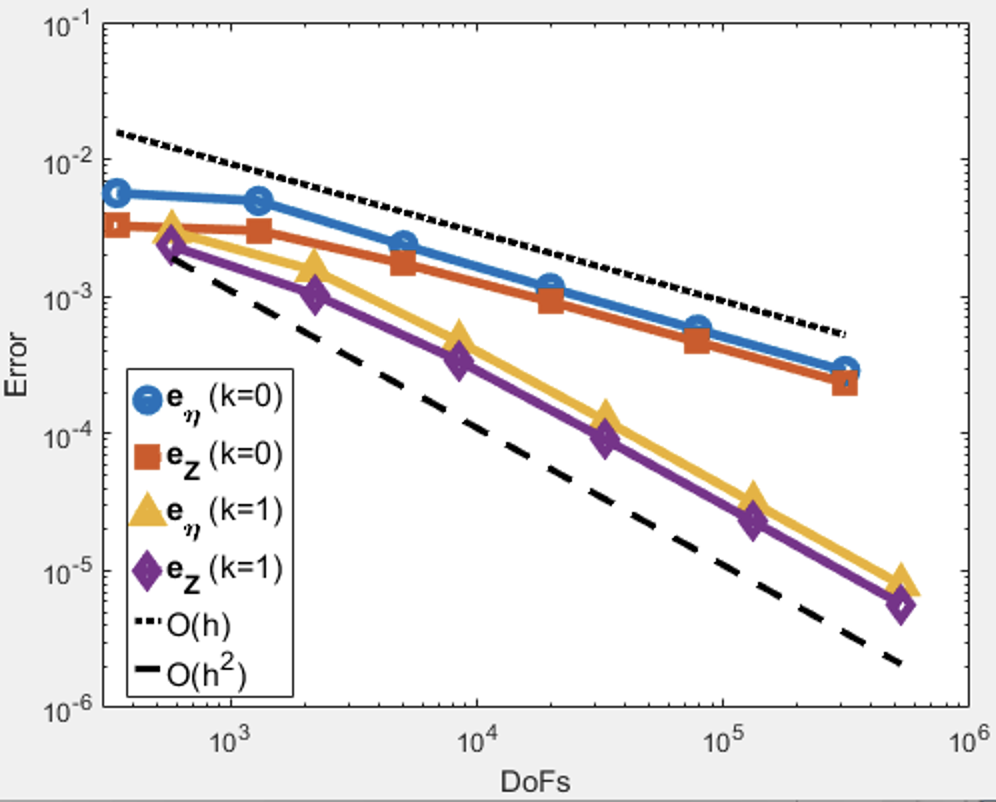}}
	\hfill
	\subfloat[Effectivity index\label{fig:effrectkappa2}]{\includegraphics[width=0.49\textwidth]{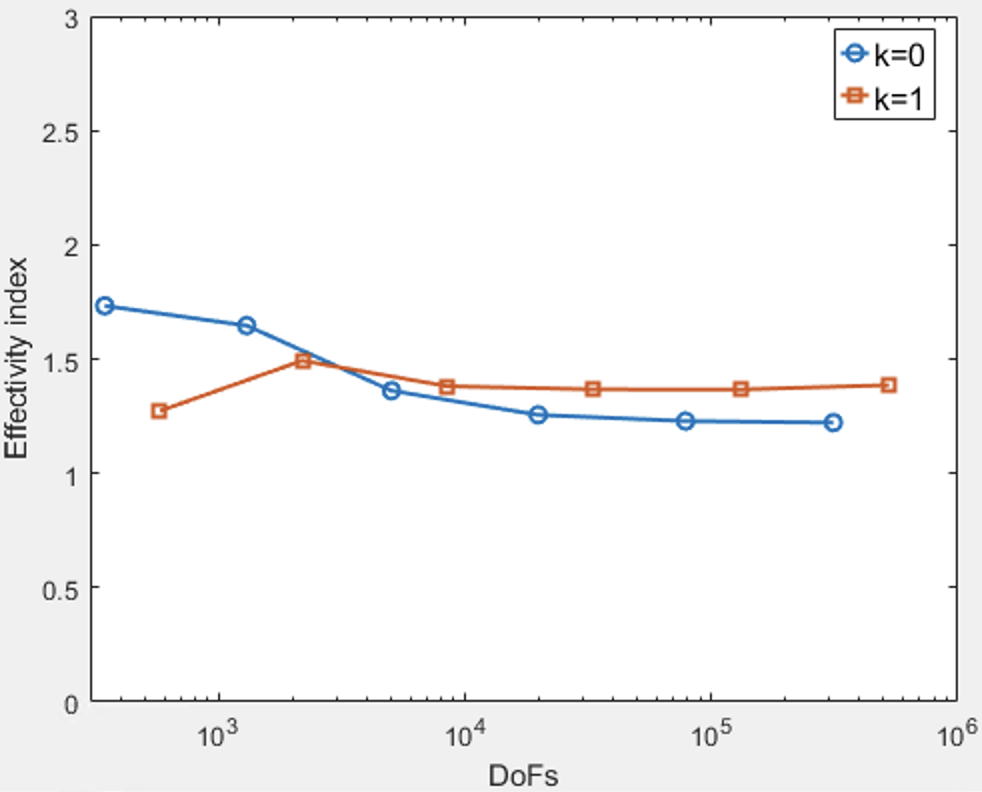}}\\
	\vspace{-0.5mm}
	\caption{Example~\ref{Example 5.1} (Case~1): Computational results for $\kappa = 10^{-3}$ using non-conforming HHO method  on rectangular meshes.}
	\label{FIGURE ex1_HHO_case2}
\end{figure}
	\begin{figure}
	\centering
	\subfloat[ Error history\label{fig:simplyrectkappa3}]{\includegraphics[width=0.49\textwidth]{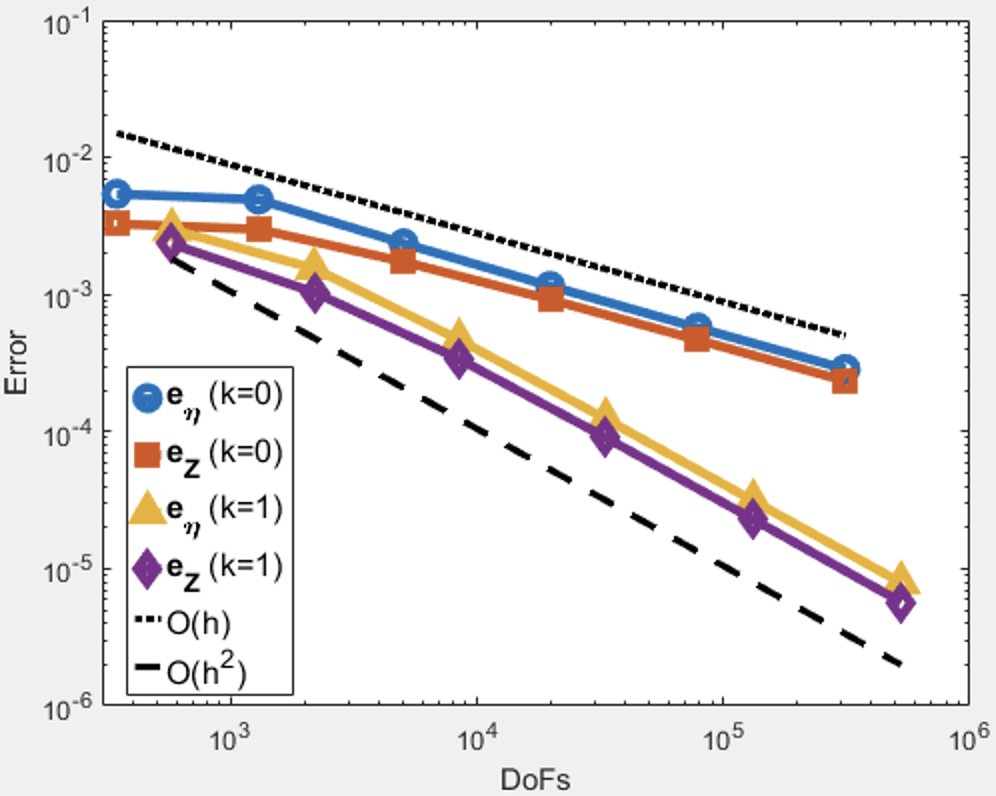}}
	\hfill
	\subfloat[Effectivity index\label{fig:effrectkappa3}]{\includegraphics[width=0.49\textwidth]{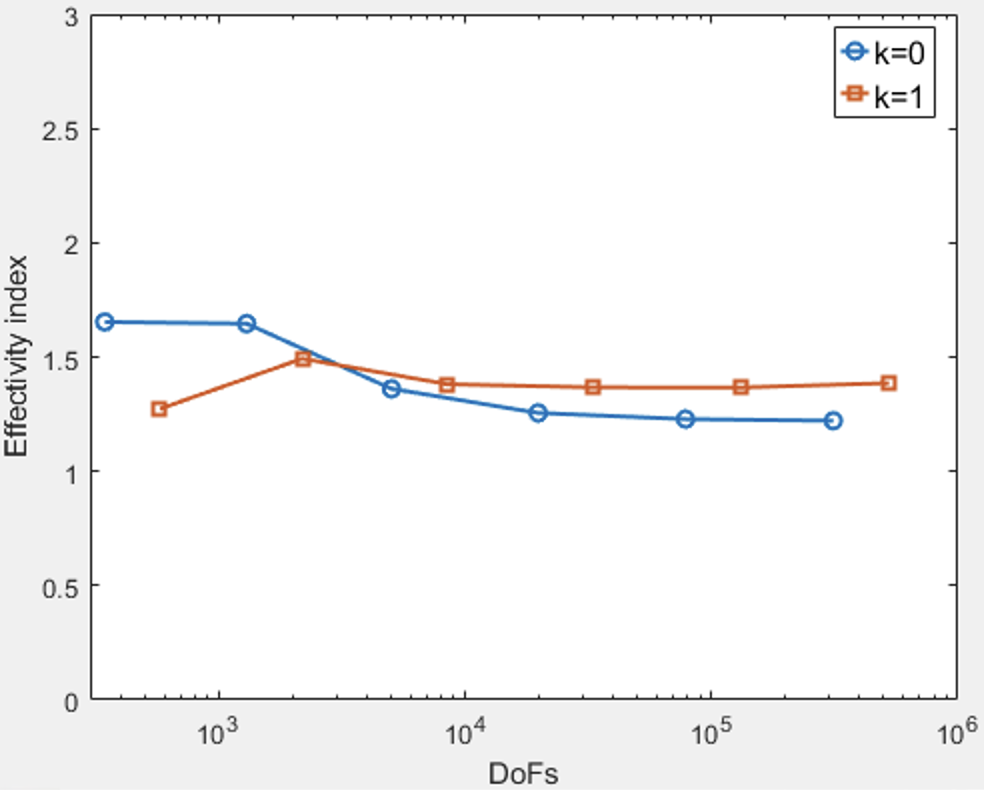}}\\

	\vspace{-0.5mm}
	\caption{Example~\ref{Example 5.1} (Case~1): Numerical results for $\kappa = 10^{-5}$ using non-conforming HHO method on rectangular meshes.}
	\label{FIGURE ex1_HHO_case3}
\end{figure}

\begin{figure}
	\centering
\subfloat[ $\kappa = 1$ \label{fig:simplycokappa1}]{\includegraphics[width=0.49\textwidth]{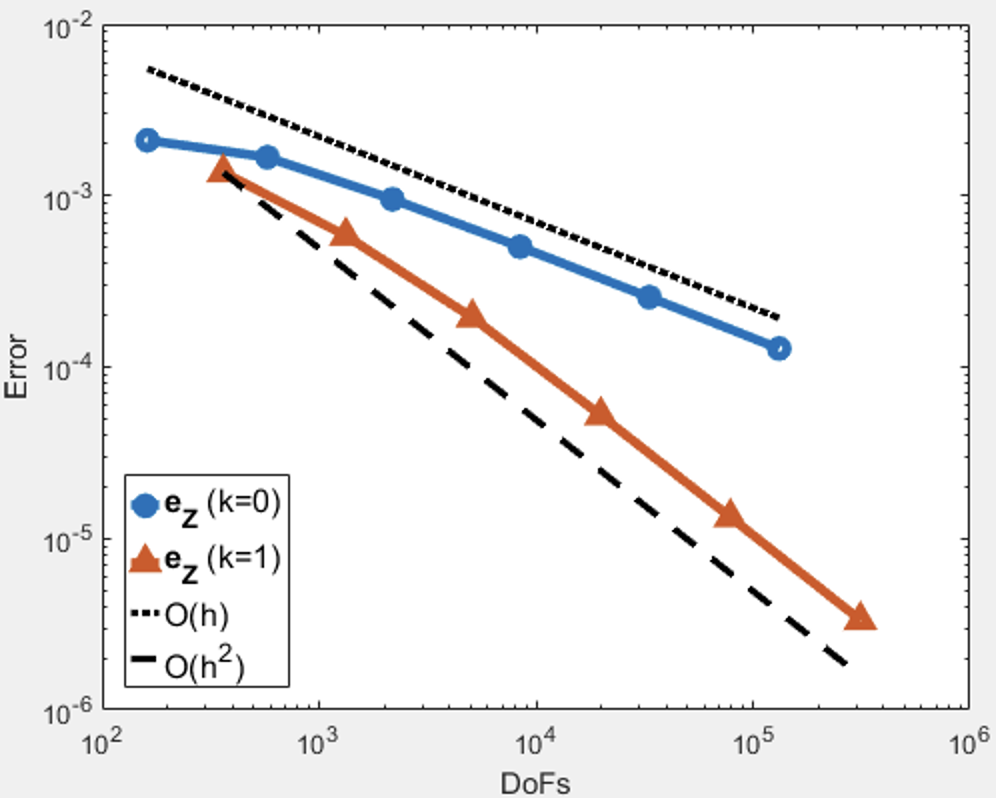}}
\hfill
\subfloat[ $\kappa = 10^{-3}$\label{fig:simplycokappa2}]{\includegraphics[width=0.49\textwidth]{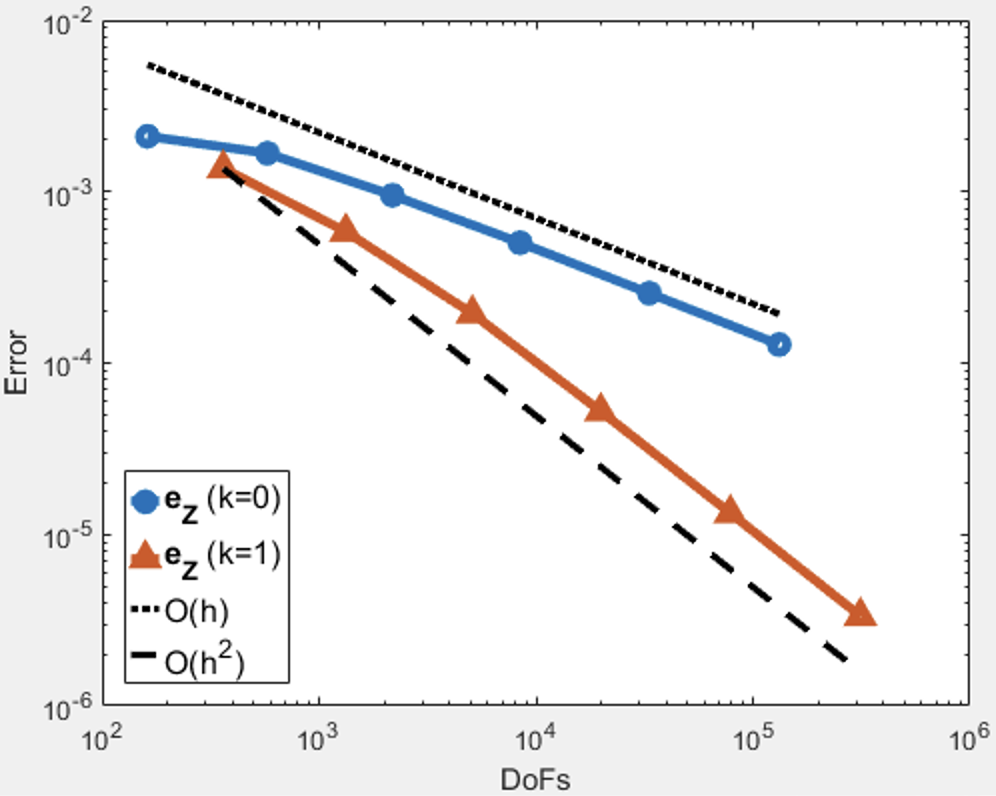}}

	\vspace{-0.5mm}
	\caption{Example~\ref{Example 5.1} (Case~1): Error histories using $C^0$-conforming HHO method.}
	\label{FIGURE ex_C0HHO}
\end{figure}
\begin{figure}
	\centering
\subfloat[ Error history\label{fig:simplycokappa3}]{\includegraphics[width=0.49\textwidth]{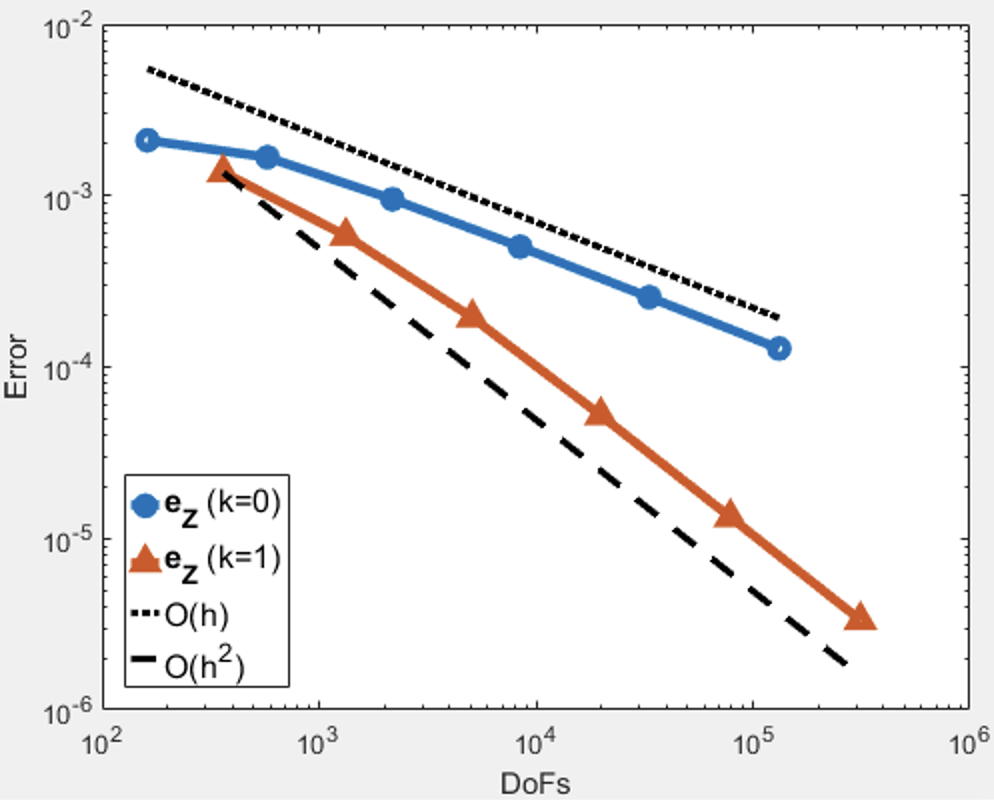}}\\
\hfill
\subfloat[ $\y_{\mathcal{T}_h}$\label{fig:simplycokappa3Yh}]{\includegraphics[width=0.49\textwidth]{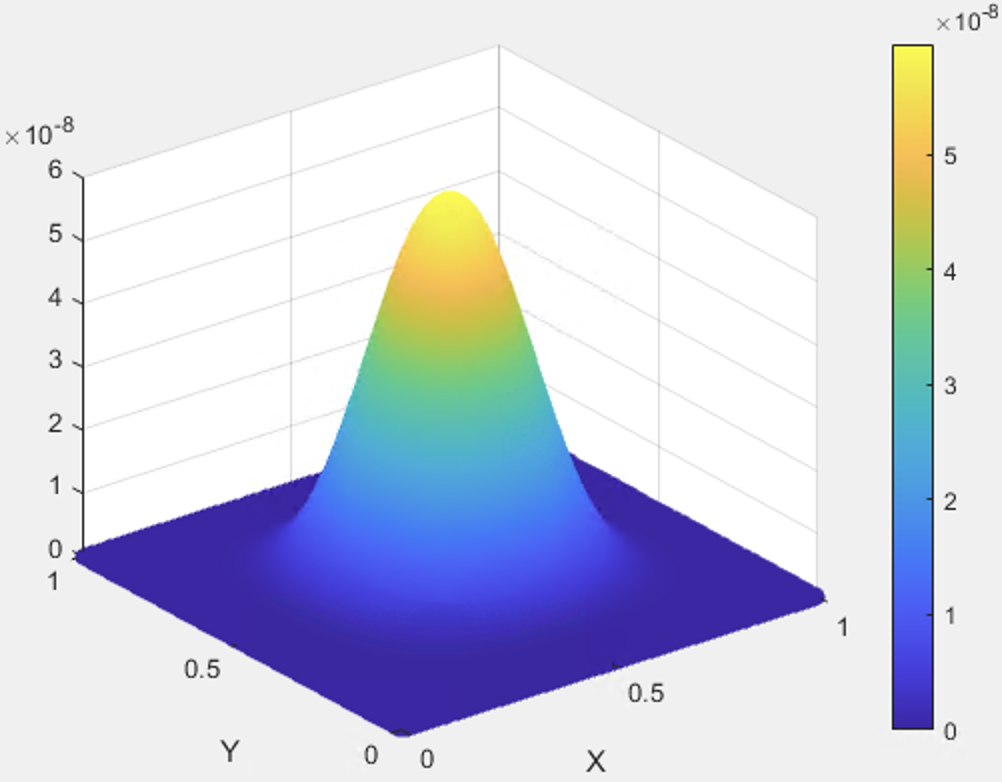}}
\hfill
\subfloat[ $\z_{\mathcal{T}_h}$\label{fig:simplycokappa3Zh}]{\includegraphics[width=0.49\textwidth]{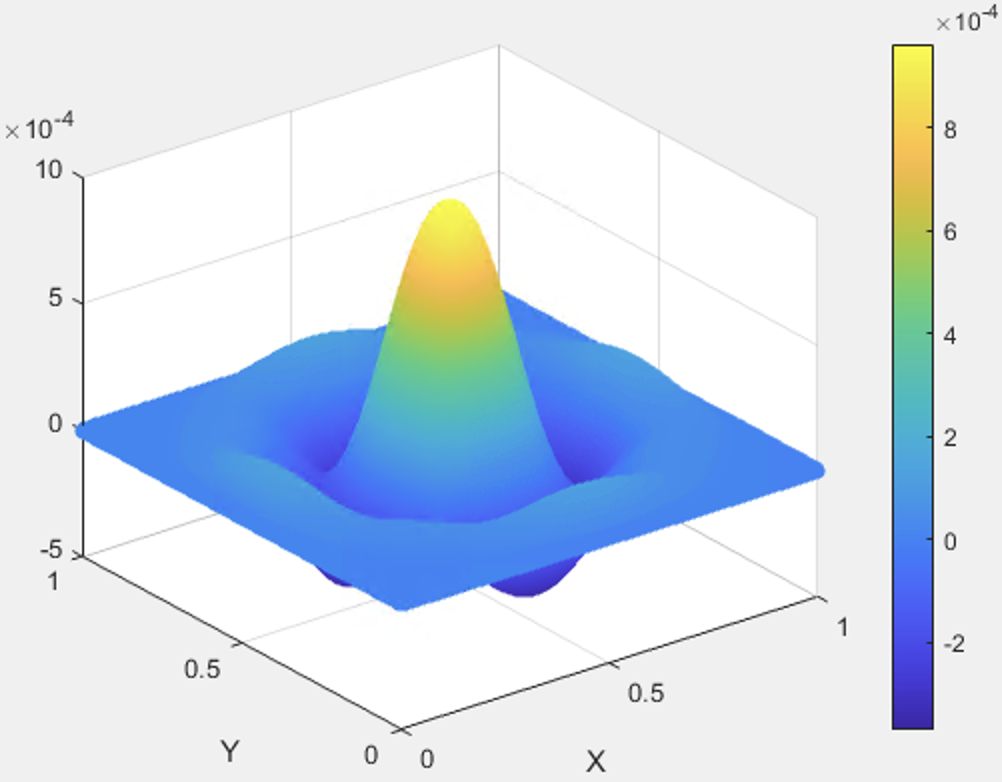}}
	\vspace{-0.5mm}
	\caption{Example~\ref{Example 5.1} (Case~1): Numerical observation for $\kappa = 10^{-5}$  and approximate solutions using $C^0$-conforming HHO method for $k=1$ on a mesh with $8096$ triangles.}
	\label{FIGURE ex_C0HHO_case3}
\end{figure}
\begin{figure}
	\centering
	\subfloat[ Error history\label{fig:cahnHHOkappa1}]{\includegraphics[width=0.49\textwidth]{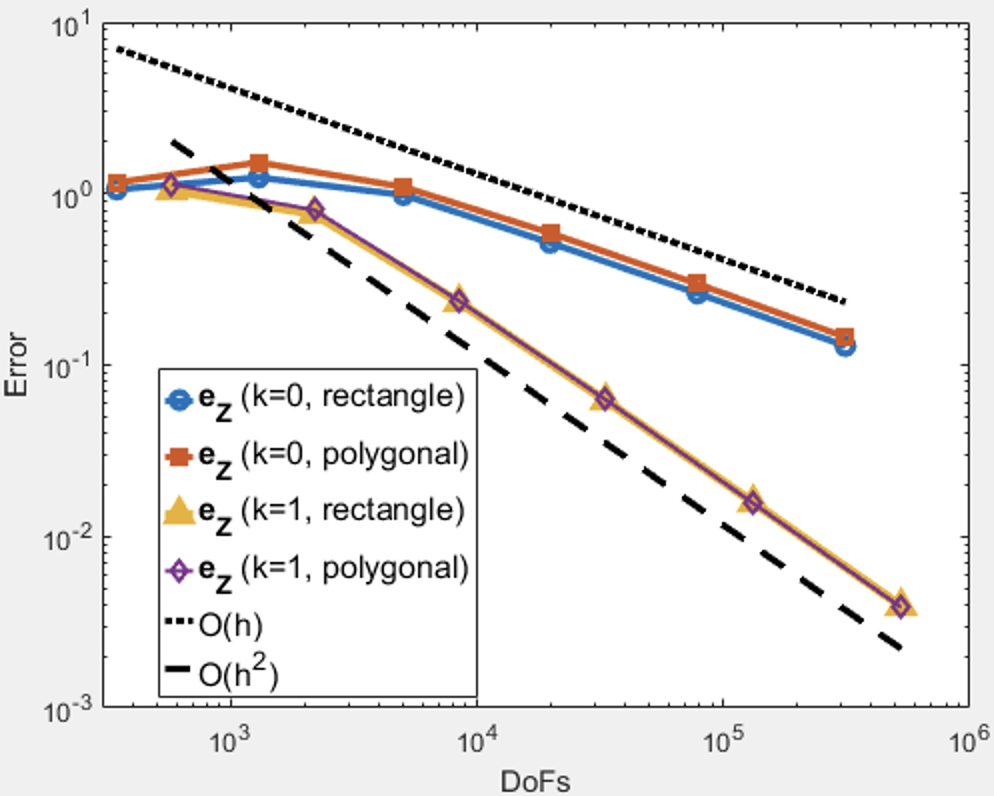}}\\
	\subfloat[ $\y_{\mathcal{T}_h}$\label{fig:cahnHHOkappa1Yh}]{\includegraphics[width=0.49\textwidth]{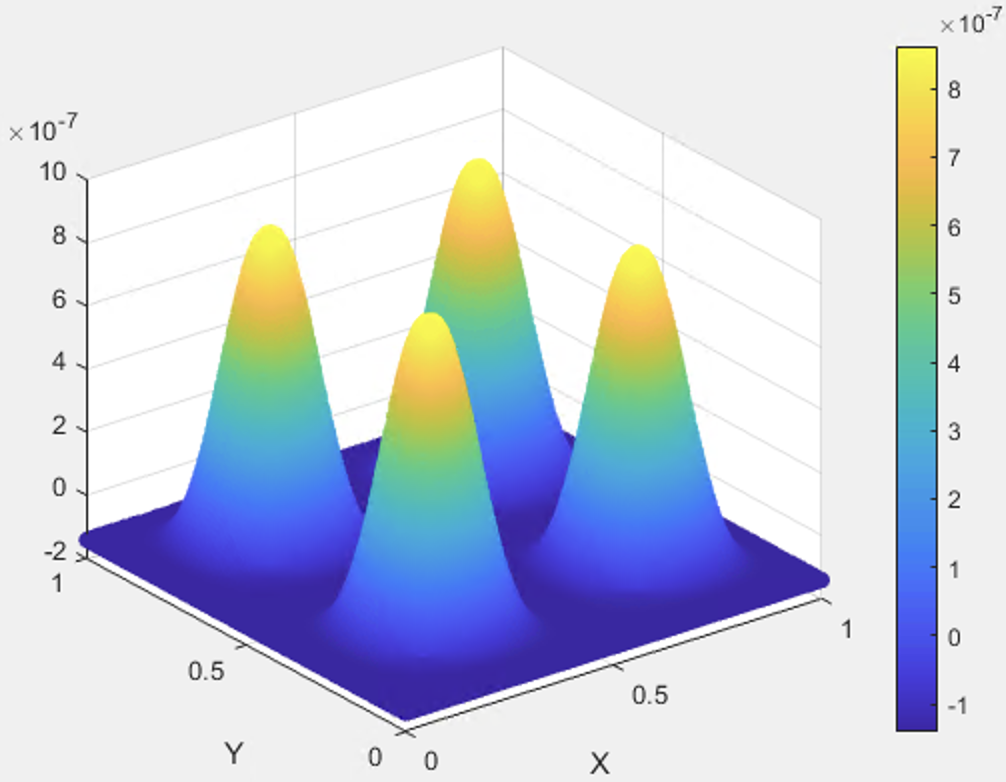}}
	\hfill
	\subfloat[ $\z_{\mathcal{T}_h}$\label{fig:cahnHHOkappa1Zh}]{\includegraphics[width=0.49\textwidth]{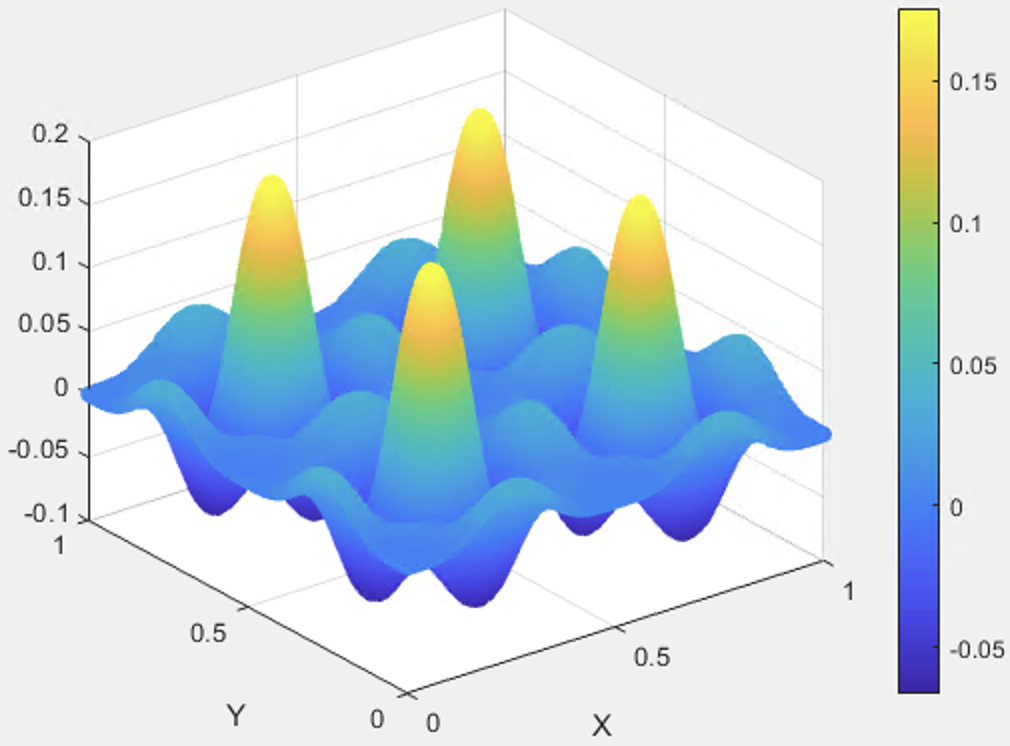}}

	\vspace{-0.5mm}
	\caption{Example~\ref{Example 5.1} (Case~2): Numerical findings for $\kappa = 1$ and approximate solutions using non-conforming HHO method for $k=1$ on a mesh with $1024$ rectangles.}
	\label{FIGURE ex2_HHO_case1}
\end{figure}
\begin{figure}
	\centering

		\subfloat[ $\kappa= 10^{-2}$ \label{fig:cahnHHOkappa2}]{\includegraphics[width=0.49\textwidth]{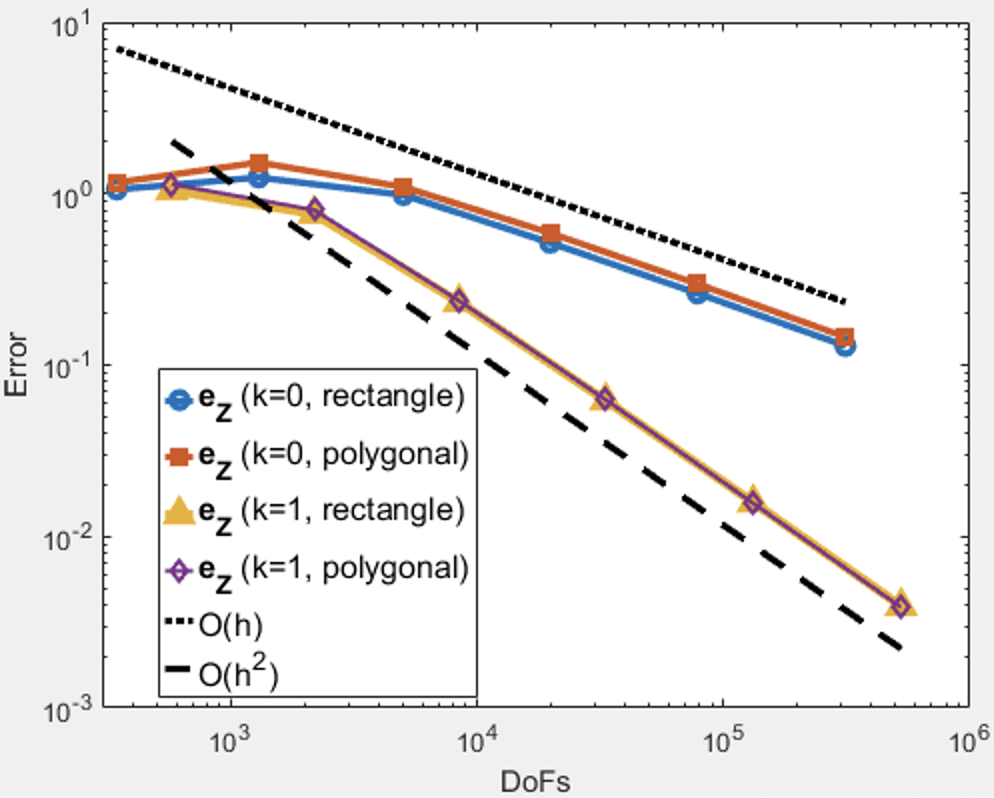}}
	\hfill
			\subfloat[ $\kappa= 10^{-4}$ \label{fig:cahnHHOkappa3}]{\includegraphics[width=0.49\textwidth]{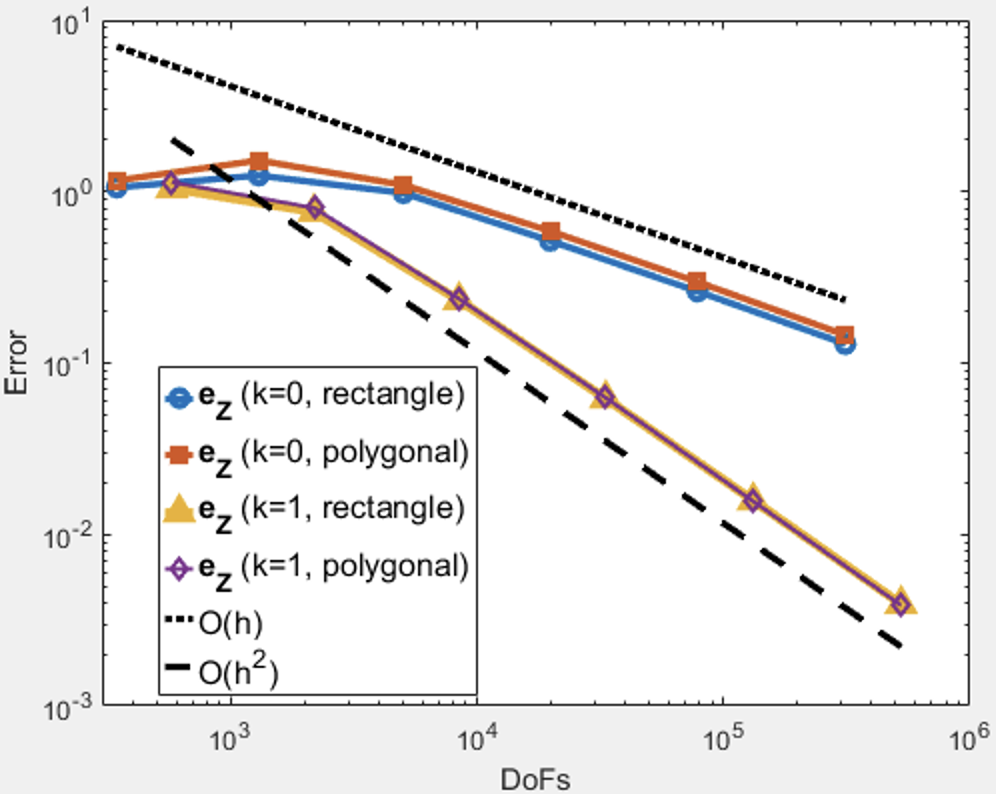}}
	\vspace{-0.5mm}
	\caption{Example~\ref{Example 5.1} (Case~2): Error histories using non-conforming HHO method.}
	\label{FIGURE ex2_HHO_case2}
\end{figure}
%
%

\begin{figure}
	\centering
	
	\subfloat[ $\kappa= 1$ \label{fig:cahncokappa1}]{\includegraphics[width=0.49\textwidth]{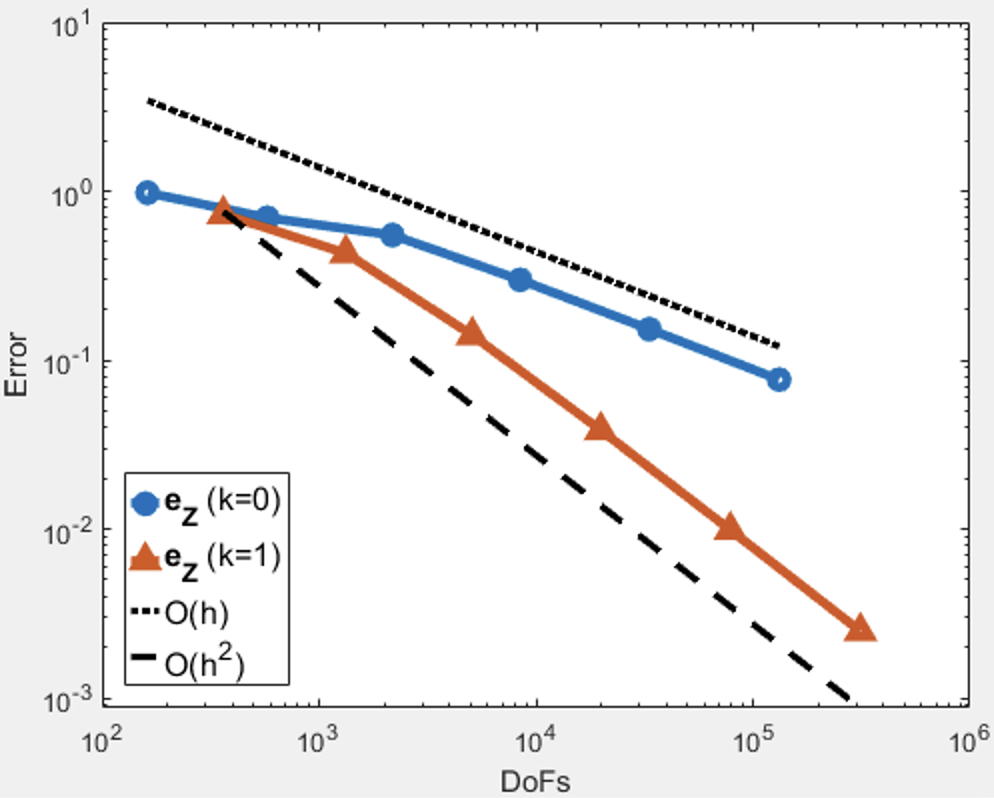}}
	\hfill
	\subfloat[ $\kappa= 10^{-2}$ \label{fig:cahncokappa2}]{\includegraphics[width=0.49\textwidth]{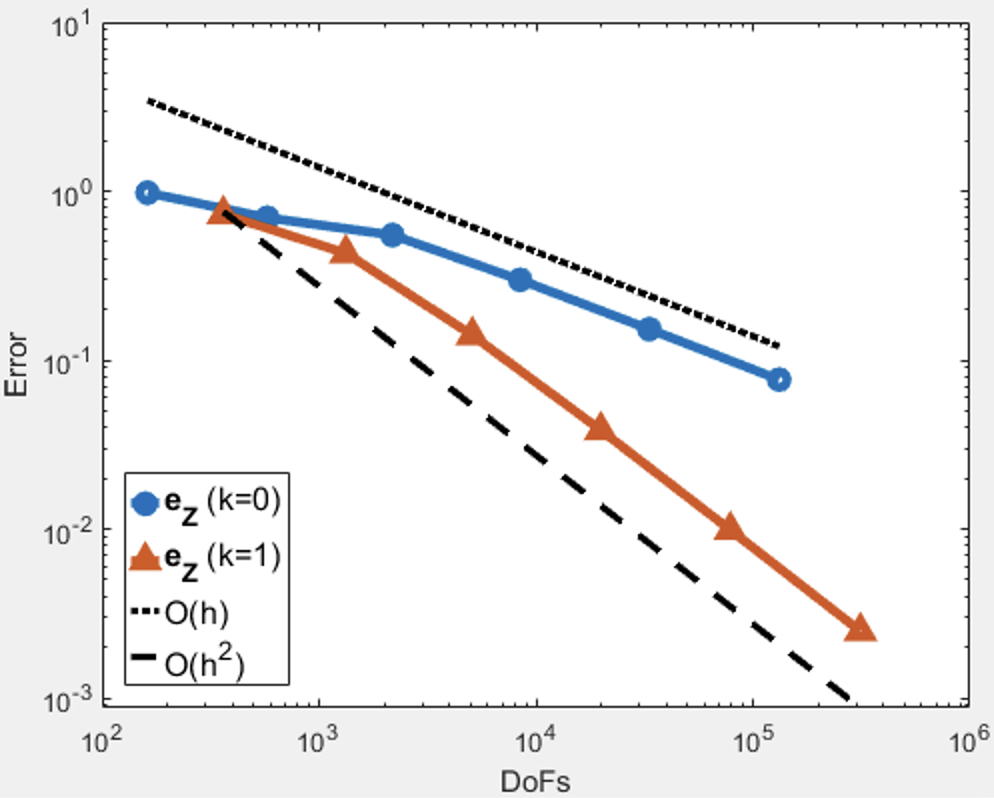}}
	
	\vspace{-0.5mm}
	\caption{Example~\ref{Example 5.1} (Case~2): Convergence results using $C^0$-conforming HHO method.}
	\label{FIGURE ex2_c0HHO_case1}
\end{figure}
%
\begin{figure}
	\centering
	\subfloat[ Error history\label{fig:cahncokappa3}]{\includegraphics[width=0.49\textwidth]{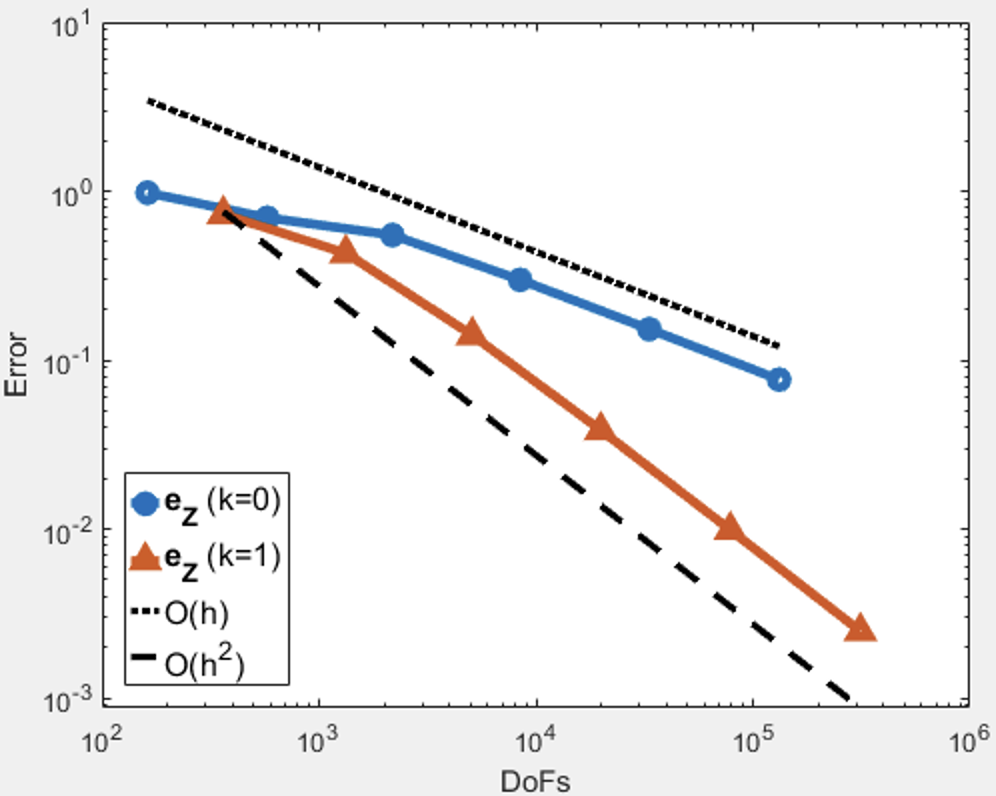}}\\
	\subfloat[ $\y_{\mathcal{T}_h}$\label{fig:cahncokappa3Yh}]{\includegraphics[width=0.49\textwidth]{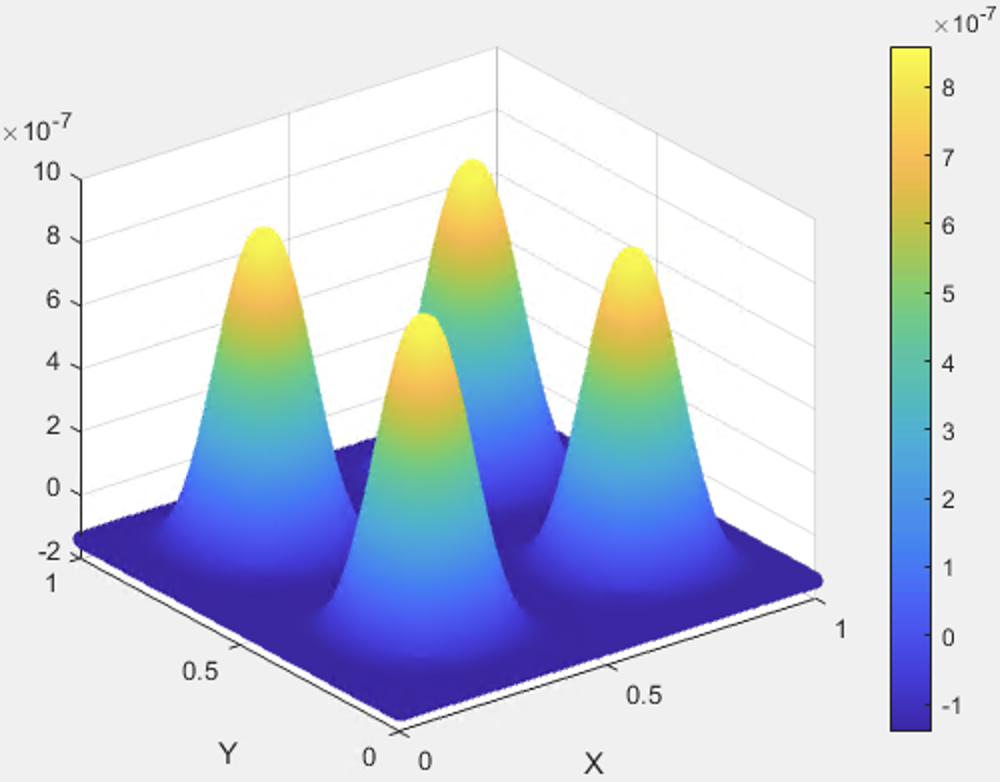}}
	\hfill
	\subfloat[ $\z_{\mathcal{T}_h}$\label{fig:cahncokappa3Zh}]{\includegraphics[width=0.49\textwidth]{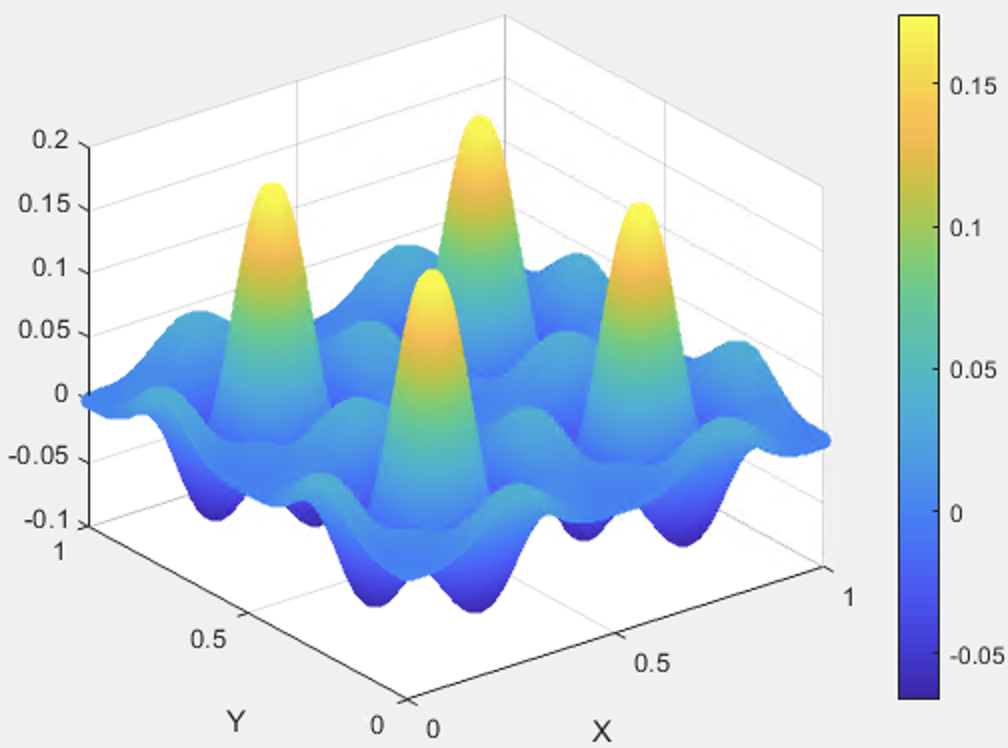}}
	\vspace{-0.5mm}
	\caption{Example~\ref{Example 5.1} (Case~2): Numerical results  for $\kappa = 10^{-4}$ and approximate solutions using $C^0$-conforming HHO method for $k=1$ on a mesh with $8096$ triangles.}
	\label{FIGURE ex2_c0HHO_case3}
\end{figure}
	\subsection{Adaptivity test for a singular solution}\label{Example 5.3}
	In this example, we consider the simply supported boundary conditions and select $\f$ on $\Omega = (0,1)^2$ with $\kappa = 1$ such that $\y(\x) = x_1^5 x_2^5 (x_1-1)^5(x_2-1)^5(x_1^2 + x_2^2)^{-{5}/{2}}$.
	
	We test an $h$-adaptive test algorithm driven by the a posteriori error estimator in Sec.~\ref{A posteriori error estimates}. The adaptive algorithm starts from a coarse mesh and uses the estimator to mark the mesh cells for refinement through a bulk-chasing criterion (also known as D\"olfer's marking \cite{MR1393904}). The adaptive algorithm is classically described as 
	\begin{align*}
		\text{Solve} \longrightarrow  \text{Estimate} \longrightarrow  \text{Mark} \longrightarrow  \text{Refine}.
	\end{align*}
	We test the convergence rate of this algorithm with $k=0,1$ and bulk-chasing criterion set to $40\%$.
	Figs.~\ref{fig:adaptiveunitsqk0sc} and \ref{fig:adaptiveunitsqk=1sc} illustrate the estimator decay, while Figs.~\ref{fig:effectivityunitsqk0sc} and \ref{fig:effectivityunitsqk1sc} demonstrate effectiveness of the proposed adaptive strategy in resolving the singular behavior of the exact solution. Starting from a coarse mesh, the estimator $\e_{\boldsymbol{\eta}}$ successfully identifies the regions requiring further refinement, leading to a progressively finer mesh around the singularity. For both $k=0$ and $k=1$, after sufficient refinement, the energy error $\e_{Z}$ and the estimator $\e_{\boldsymbol{\eta}}$ exhibit the expected optimal decay, while the effectivity index remains nearly constant. These results confirm the reliability of the proposed estimator and the effectiveness of the resulting adaptive algorithm in recovering the optimal convergence behavior.
	
	\subsection{Adaptivity for Cahn-Hilliard-type boundary conditions}\label{cahn_adaptive_ex}
	In this example, we test the adaptive algorithm using the same a posteriori error estimator as in Theorem~\ref{reliability}. We consider $\Omega=(0,1)^2$ with $\kappa=1$ and construct $\f$ such that
	$
	\y(\x)=x_1^6x_2^6(x_1-1)^6(x_2-1)^6(x_1^2+x_2^2)^{-3}.
	$
	The presence of the factor $(x_1^2+x_2^2)^{-3}$ introduces a singularity at the origin, resulting in limited regularity of the exact solution. Consequently, the optimal convergence rates are not achieved under uniform mesh refinement.. To overcome this difficulty, we employ the proposed adaptive strategy, which automatically identifies the region of reduced regularity and concentrates the mesh refinement in its vicinity. Fig.~\ref{fig:cahnadaptivityk1} and \ref{fig:cahneffectivityk1} presents the convergence history of the adaptive algorithm for $k=1$ with a bulk-chasing criterion of $40\%$, demonstrating the effectiveness of adaptive refinement in recovering the desired convergence behaviour despite the limited regularity.
%

	\begin{figure}
	\centering
	\subfloat[Estimator and energy error\label{fig:adaptiveunitsqk0sc}]{\includegraphics[width=0.495\textwidth]{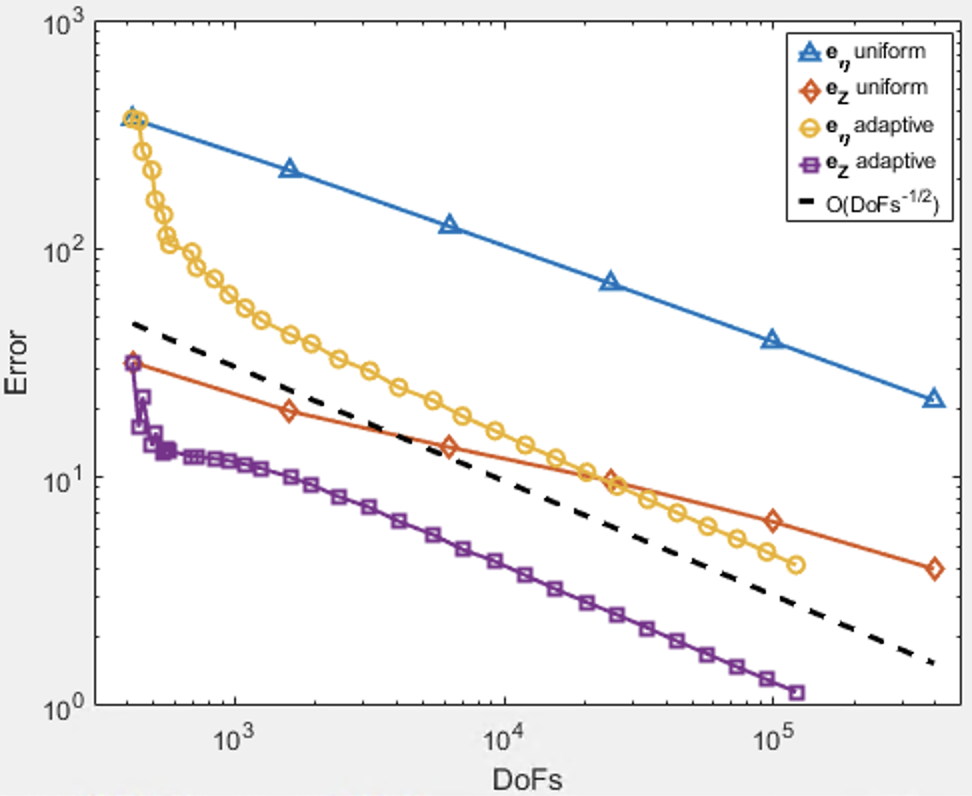}}
	\hfill
	\subfloat[Effectivity index\label{fig:effectivityunitsqk0sc}]{\includegraphics[width=0.495\textwidth]{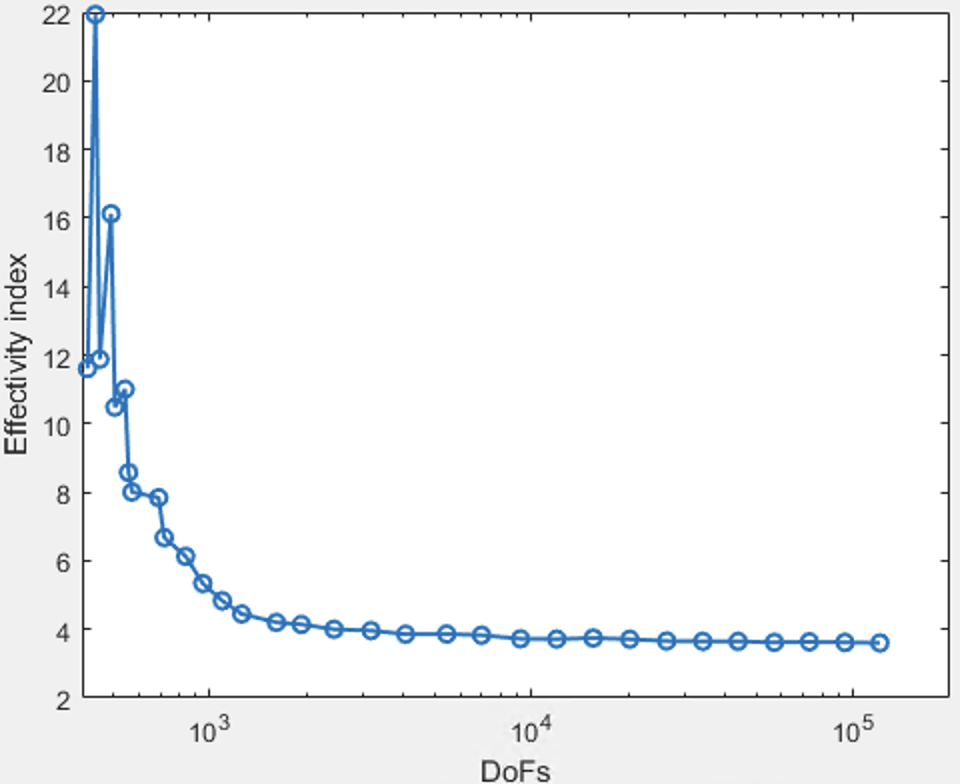}}\\
	\subfloat[ $\y_{\mathcal{T}_h}$ \label{fig:adaptiveunitsqk0scYh}]{\includegraphics[width=0.49\textwidth]{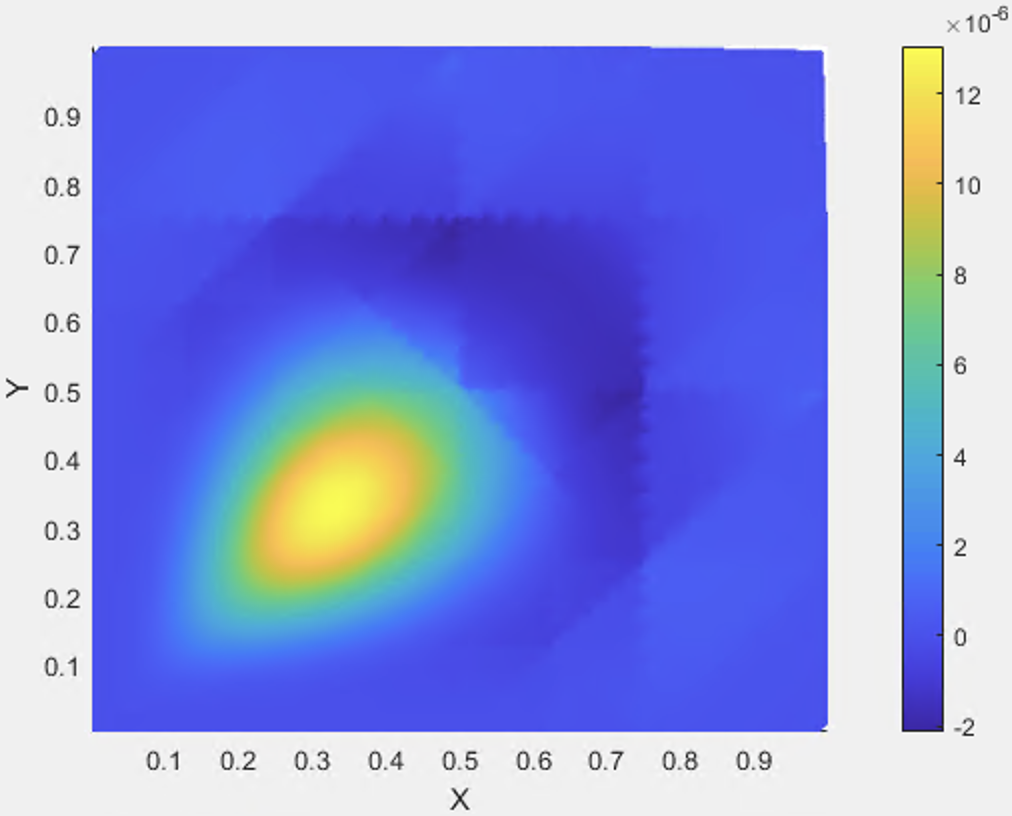}}
	\hfill
	\subfloat[$\z_{\mathcal{T}_h}$ \label{fig:adaptiveunitsqk0scZh}]{\includegraphics[width=0.49\textwidth]{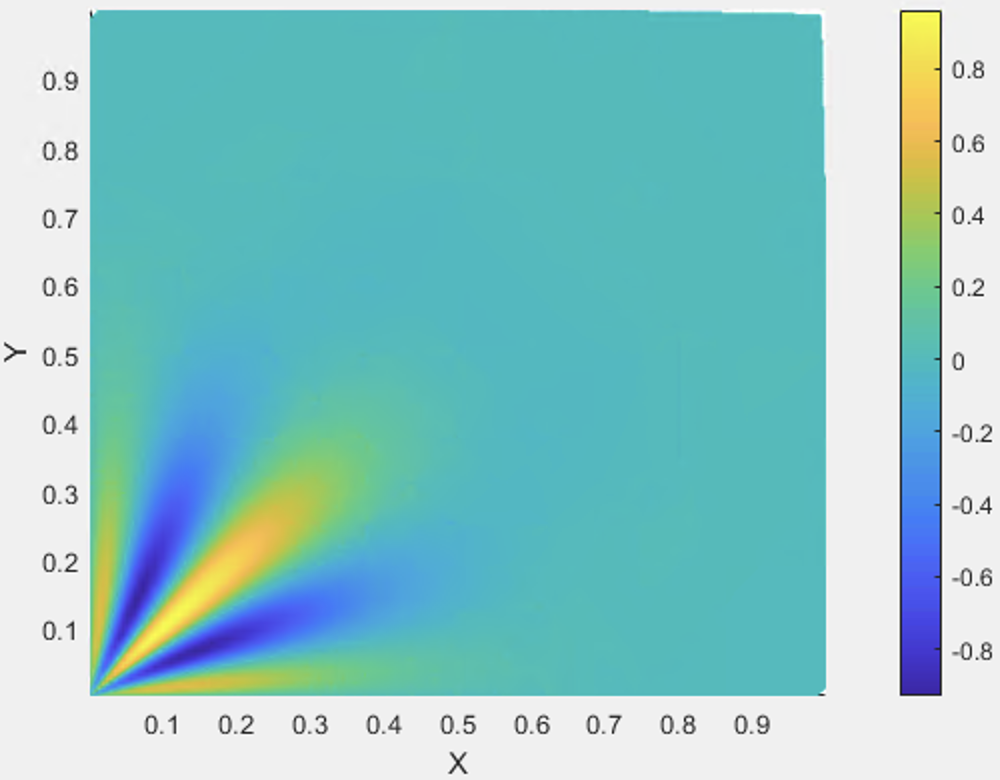}}\\
	\subfloat[ \label{fig:mesh3634lv28unitsqk0sc}]{\includegraphics[width=0.49\textwidth]{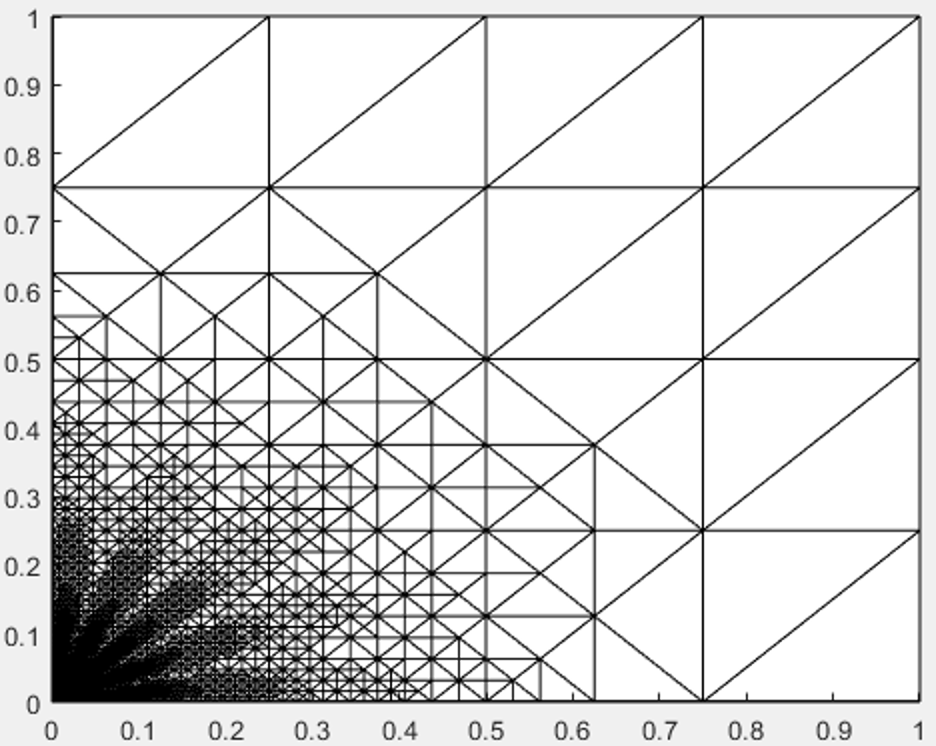}}

	\vspace{-0.5mm}
	\caption{Example~\ref{Example 5.3}: Convergence results and computed solutions \ref{fig:adaptiveunitsqk0scYh}-\ref{fig:adaptiveunitsqk0scZh} on an adaptive mesh \ref{fig:mesh3634lv28unitsqk0sc} with $3634$ triangles for $k=0$.}
	\label{FIGURE ex3_k0}
\end{figure}
	\begin{figure}
	\centering
		\subfloat[Estimator and energy error\label{fig:adaptiveunitsqk=1sc}]{\includegraphics[width=0.495\textwidth]{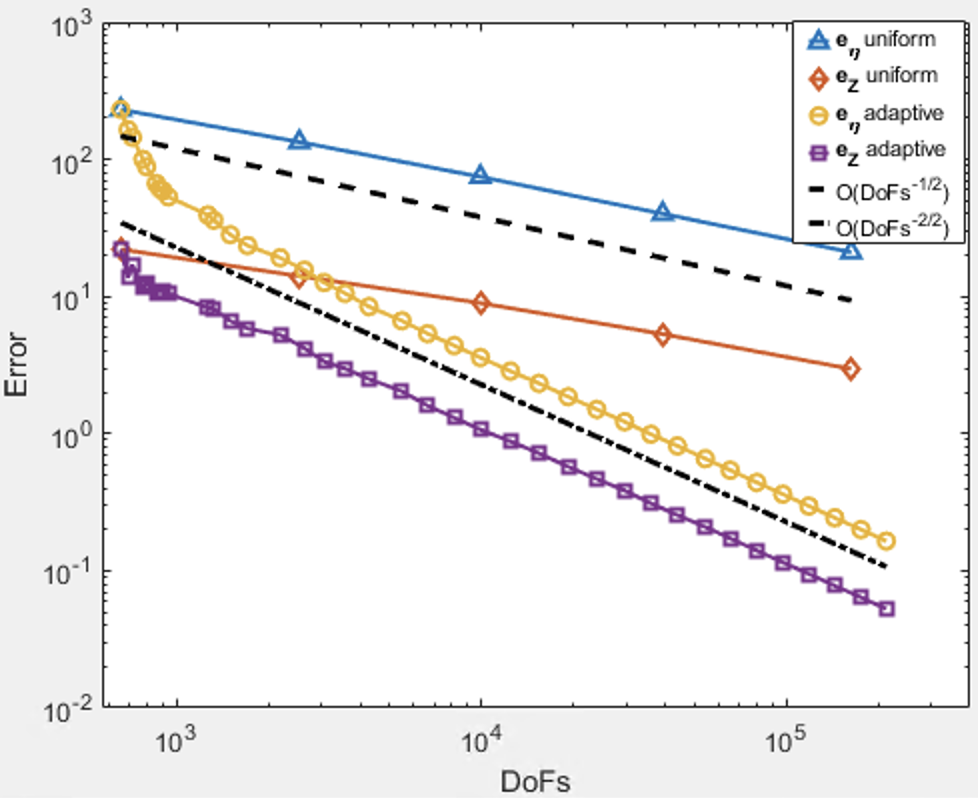}}
	\hfill
	\subfloat[Effectivity index\label{fig:effectivityunitsqk1sc}]{\includegraphics[width=0.495\textwidth]{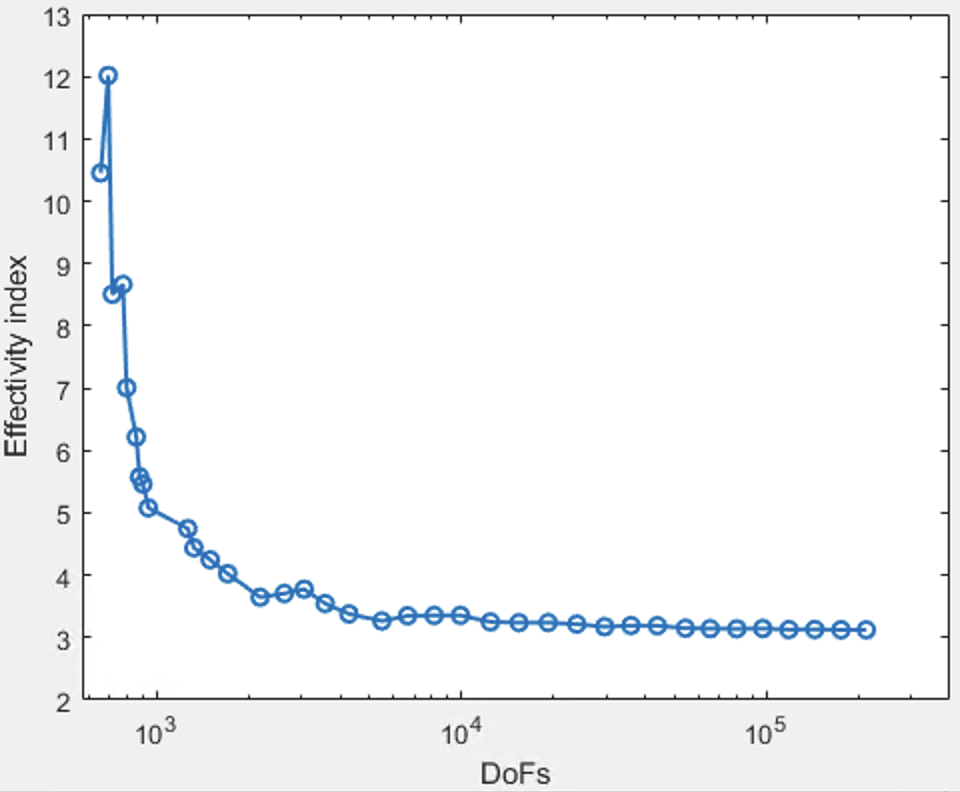}}\\
	\subfloat[ $\y_{\mathcal{T}_h}$ \label{fig:adaptiveunitsqk=1scYh}]{\includegraphics[width=0.49\textwidth]{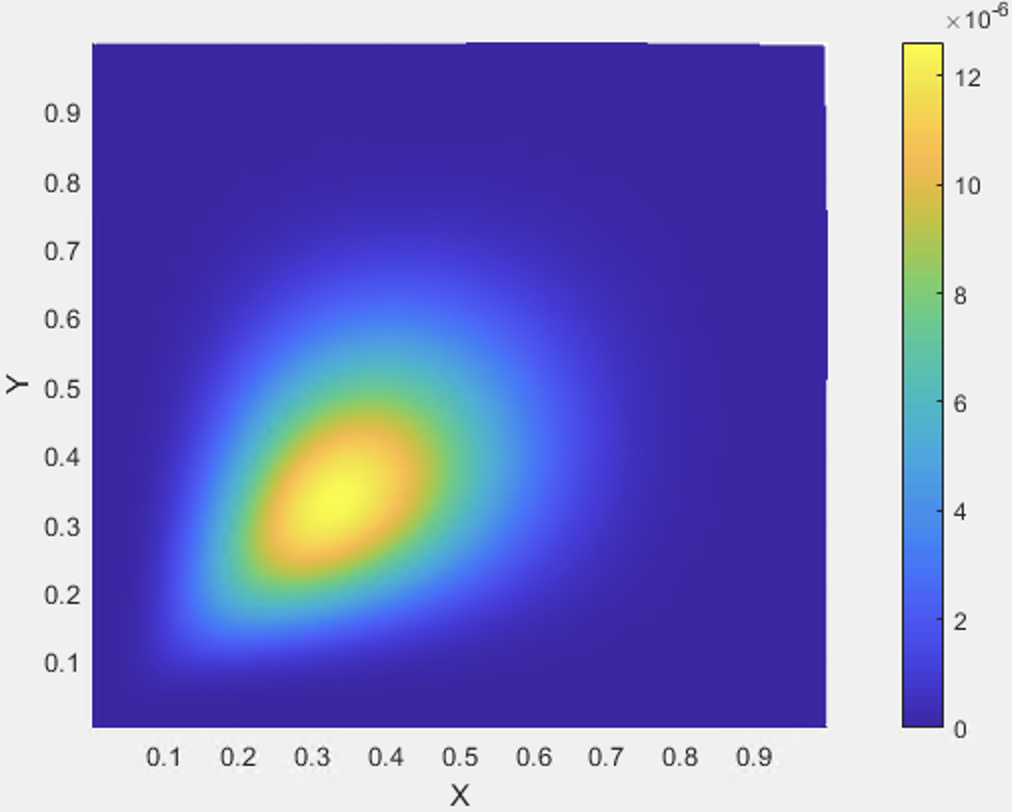}}
	\hfill
	\subfloat[$\z_{\mathcal{T}_h}$ \label{fig:adaptiveunitsqk=1scZh}]{\includegraphics[width=0.49\textwidth]{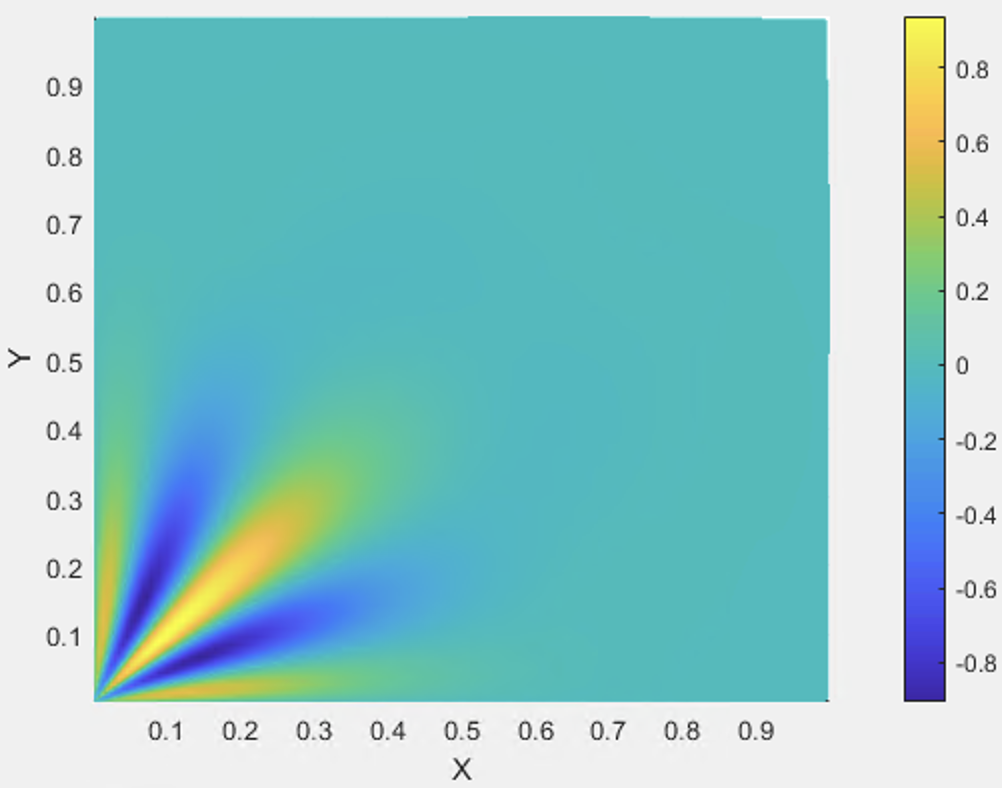}}\\
	\subfloat[ \label{fig:mesh4182unitsqlv32k1}]{\includegraphics[width=0.49\textwidth]{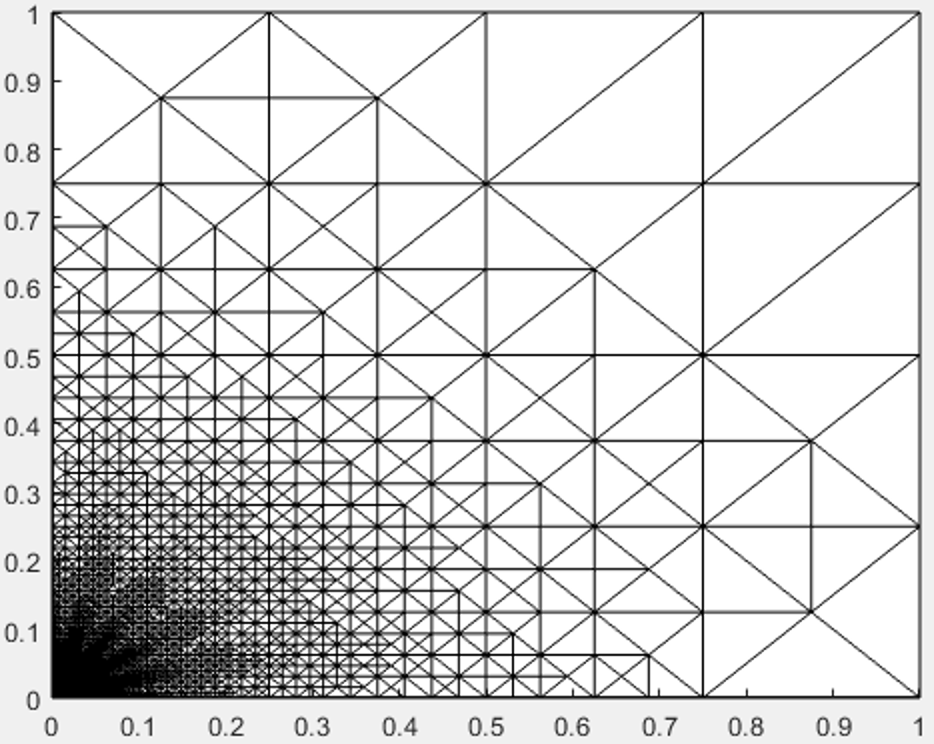}}
	\vspace{-0.5mm}
	\caption{Example~\ref{Example 5.3}: Convergence results and computed solutions \ref{fig:adaptiveunitsqk=1scYh}-\ref{fig:adaptiveunitsqk=1scYh} on  an adaptive mesh \ref{fig:mesh4182unitsqlv32k1} with $4182$ triangles for $k=1$.}
	\label{FIGURE ex3_k1}
\end{figure}

	\begin{figure}
	\centering
	\subfloat[Estimator and energy error\label{fig:cahnadaptivityk1}]{\includegraphics[width=0.495\textwidth]{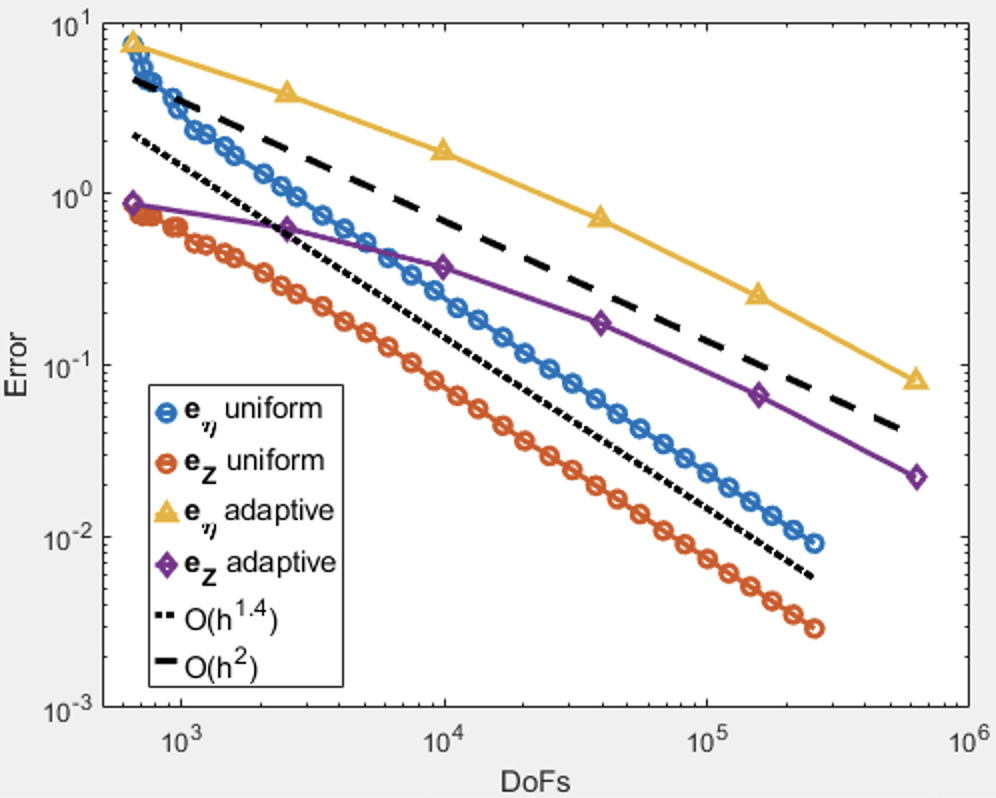}}
	\hfill
	\subfloat[Effectivity index\label{fig:cahneffectivityk1}]{\includegraphics[width=0.495\textwidth]{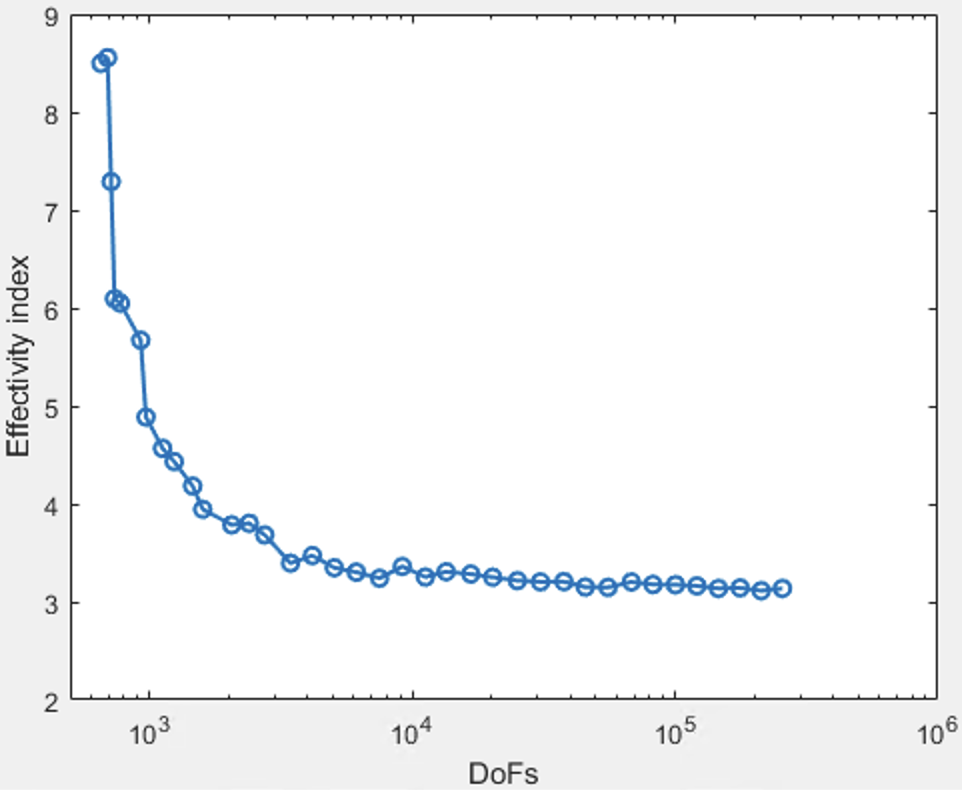}}\\
	\subfloat[ $\y_{\mathcal{T}_h}$ \label{fig:cahnadaptivityk1Yh}]{\includegraphics[width=0.49\textwidth]{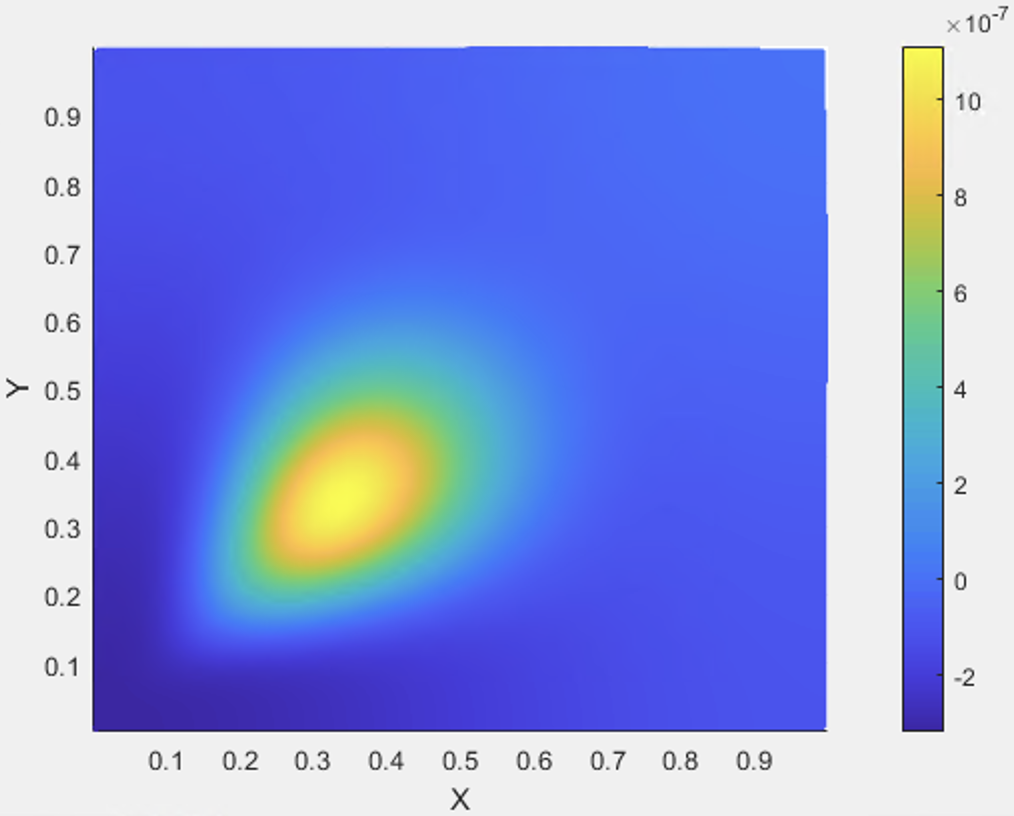}}
	\hfill
	\subfloat[$\z_{\mathcal{T}_h}$ \label{fig:cahnadaptivityk1Zh}]{\includegraphics[width=0.49\textwidth]{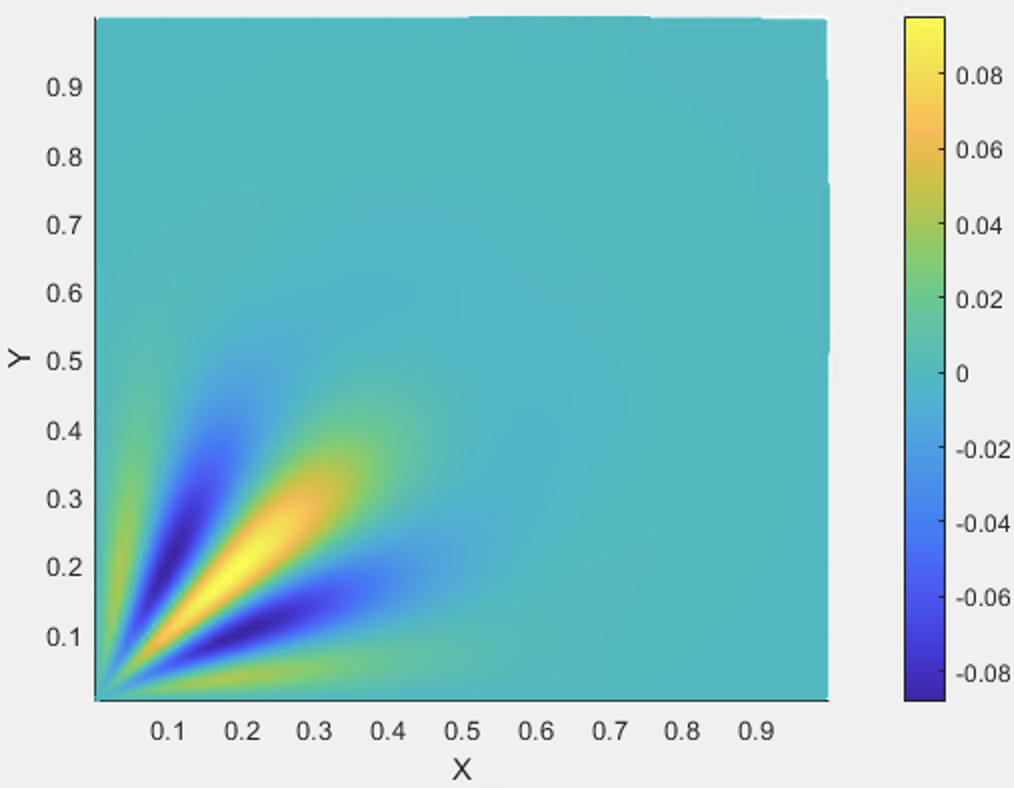}}\\
	\subfloat[ \label{fig:cahnmesh2908lv29k1}]{\includegraphics[width=0.49\textwidth]{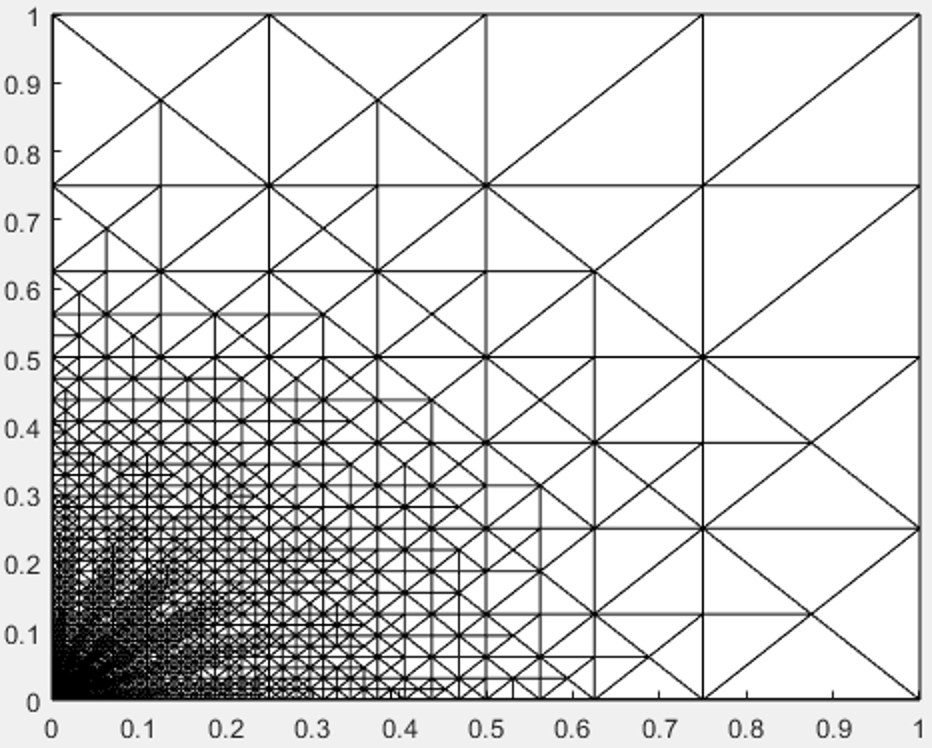}}
	\vspace{-0.5mm}
	\caption{Example~\ref{cahn_adaptive_ex}: Convergence results and computed solutions \ref{fig:cahnadaptivityk1Yh}-\ref{fig:cahnadaptivityk1Zh} on an adaptive mesh \ref{fig:cahnmesh2908lv29k1} with $2908$ triangles for $k=1$.
		.}
	\label{FIGURE ex3cahn}
\end{figure}
\section{Conclusions}
In this paper, we have developed and analysed HHO discretizations for sixth-order elliptic problems with two different types of boundary conditions on convex polygonal and polyhedral domains in two and three dimensions. Based on the Ciarlet--Raviart reformulation, we have considered both a fully non-conforming HHO method on general polytopal meshes and a $C^0$-conforming HHO method on simplicial meshes. For both discretizations, optimal-order a priori error estimates have been established under suitable regularity assumptions. In addition, for the fully non-conforming HHO method, we have derived a reliable residual-based a posteriori error estimator. The numerical experiments confirm the predicted convergence rates and demonstrate the reliability and efficiency of the proposed error estimator through the computed effectivity indices. These results provide a unified HHO framework for the approximation of sixth-order elliptic problems on a broad class of meshes and constitute a basis for the development of adaptive strategies and extensions to more general higher-order elliptic problems.
\section*{Acknowledgements}
The authors acknowledge the computational support provided by the National Supercomputing Mission (NSM), India, through the ``PARAM Ganga'' high-performance computing facility at IIT Roorkee. The authors would also like to express their sincere gratitude to Prof. Zhaonan Dong (Inria, 48 rue Barrault, 75647 Paris, France) for generously providing the HHO code and for many insightful discussions that greatly contributed to this work.
	\bibliographystyle{abbrv}
	\bibliography{reference}

@article {MR4621059,
	AUTHOR = {Zhang, Yongchao and Mei, Liquan and Wang, Gang},
	TITLE = {A posteriori error analysis of the hybrid high-order method
	for the {S}tokes problem},
	JOURNAL = {J. Sci. Comput.},
	FJOURNAL = {Journal of Scientific Computing},
	VOLUME = {96},
	YEAR = {2023},
	NUMBER = {3},
	PAGES = {Paper No. 74, 31},
	ISSN = {0885-7474,1573-7691},
	MRCLASS = {65N30 (65N35)},
	MRNUMBER = {4621059},
	MRREVIEWER = {Nianyu\ Yi},
	DOI = {10.1007/s10915-023-02291-6},
	URL = {https://doi.org/10.1007/s10915-023-02291-6},
}

@article {MR2034620,
	AUTHOR = {Karakashian, Ohannes A. and Pascal, Frederic},
	TITLE = {A posteriori error estimates for a discontinuous {G}alerkin
	approximation of second-order elliptic problems},
	JOURNAL = {SIAM J. Numer. Anal.},
	FJOURNAL = {SIAM Journal on Numerical Analysis},
	VOLUME = {41},
	YEAR = {2003},
	NUMBER = {6},
	PAGES = {2374--2399},
	ISSN = {0036-1429,1095-7170},
	MRCLASS = {65N15 (65N30 65N50 65N55)},
	MRNUMBER = {2034620},
	MRREVIEWER = {M.\ M.\ Sussman},
	DOI = {10.1137/S0036142902405217},
	URL = {https://doi.org/10.1137/S0036142902405217},
}

@article {MR4891729,
	AUTHOR = {Carstensen, Carsten and Tran, Ngoc Tien},
	TITLE = {Locking-free hybrid high-order method for linear elasticity},
	JOURNAL = {SIAM J. Numer. Anal.},
	FJOURNAL = {SIAM Journal on Numerical Analysis},
	VOLUME = {63},
	YEAR = {2025},
	NUMBER = {2},
	PAGES = {827--853},
	ISSN = {0036-1429,1095-7170},
	MRCLASS = {65N30 (65N12 65Y20)},
	MRNUMBER = {4891729},
	MRREVIEWER = {Alexandre\ L.\ Madureira},
	DOI = {10.1137/24M1650363},
	URL = {https://doi.org/10.1137/24M1650363},
}

@article {MR4630545,
	AUTHOR = {Bertrand, Fleurianne and Carstensen, Carsten and Gr\"a\ss le,
	Benedikt and Tran, Ngoc Tien},
	TITLE = {Stabilization-free {HHO} a posteriori error control},
	JOURNAL = {Numer. Math.},
	FJOURNAL = {Numerische Mathematik},
	VOLUME = {154},
	YEAR = {2023},
	NUMBER = {3-4},
	PAGES = {369--408},
	ISSN = {0029-599X,0945-3245},
	MRCLASS = {65N12 (65N30 65Y20)},
	MRNUMBER = {4630545},
	DOI = {10.1007/s00211-023-01366-8},
	URL = {https://doi.org/10.1007/s00211-023-01366-8},
}

@article {MR4570555,
	AUTHOR = {Diegel, Amanda E. and Sharma, Natasha S.},
	TITLE = {A {$\rm C^0$} interior penalty method for the phase field
	crystal equation},
	JOURNAL = {Numer. Methods Partial Differential Equations},
	FJOURNAL = {Numerical Methods for Partial Differential Equations. An
	International Journal},
	VOLUME = {39},
	YEAR = {2023},
	NUMBER = {3},
	PAGES = {2510--2537},
	ISSN = {0749-159X,1098-2426},
	MRCLASS = {65M60 (65M12)},
	MRNUMBER = {4570555},
	DOI = {10.1002/num.22976},
	URL = {https://doi.org/10.1002/num.22976},
}

@article{MR2048235,
	AUTHOR = {Mozolevski, Igor and S\"uli, Endre},
	TITLE = {A priori error analysis for the {$hp$}-version of the
	discontinuous {G}alerkin finite element method for the
	biharmonic equation},
	JOURNAL = {Comput. Methods Appl. Math.},
	FJOURNAL = {Computational Methods in Applied Mathematics},
	VOLUME = {3},
	YEAR = {2003},
	NUMBER = {4},
	PAGES = {596--607},
	ISSN = {1609-4840,1609-9389},
	MRCLASS = {65N30 (65N15 74G15 74K20)},
	MRNUMBER = {2048235},
	MRREVIEWER = {Denis\ Mercier},
	DOI = {10.2478/cmam-2003-0037},
	URL = {https://doi.org/10.2478/cmam-2003-0037},
}

@article{MR4604440,
	AUTHOR = {Diegel, Amanda E. and Sharma, Natasha S.},
	TITLE = {Unconditional energy stability and solvability for a {C}0
	interior penalty method for a sixth-order equation modeling
	microemulsions},
	JOURNAL = {Int. J. Numer. Anal. Model.},
	FJOURNAL = {International Journal of Numerical Analysis and Modeling},
	VOLUME = {20},
	YEAR = {2023},
	NUMBER = {4},
	PAGES = {459--477},
	ISSN = {1705-5105,2617-8710},
	MRCLASS = {65M60 (82C26 82M10)},
	MRNUMBER = {4604440},
	DOI = {10.4208/ijnam2023-1019},
	URL = {https://doi.org/10.4208/ijnam2023-1019},
}

@article{MR5047454,
	AUTHOR = {Saylor, Giselle and Horv\'ath, Tam\'as L. and Sharma, Natasha
	S.},
	TITLE = {An unconditionally stable hybridizable-embedded discontinuous
	{G}alerkin method for the phase field crystal equation},
	JOURNAL = {Appl. Numer. Math.},
	FJOURNAL = {Applied Numerical Mathematics. An IMACS Journal},
	VOLUME = {225},
	YEAR = {2026},
	PAGES = {241--258},
	ISSN = {0168-9274,1873-5460},
	MRCLASS = {65M60 (35Q35 35Q92 35R99 65N30)},
	MRNUMBER = {5047454},
	DOI = {10.1016/j.apnum.2026.03.003},
	URL = {https://doi.org/10.1016/j.apnum.2026.03.003},
}

@article{MR2361532,
	AUTHOR = {Xia, Yinhua and Xu, Yan and Shu, Chi-Wang},
	TITLE = {Local discontinuous {G}alerkin methods for the
	{C}ahn-{H}illiard type equations},
	JOURNAL = {J. Comput. Phys.},
	FJOURNAL = {Journal of Computational Physics},
	VOLUME = {227},
	YEAR = {2007},
	NUMBER = {1},
	PAGES = {472--491},
	ISSN = {0021-9991,1090-2716},
	MRCLASS = {65M60 (35K55 35Q53)},
	MRNUMBER = {2361532},
	DOI = {10.1016/j.jcp.2007.08.001},
	URL = {https://doi.org/10.1016/j.jcp.2007.08.001}
}

@article{dong2026hp,
	title={$hp$-a posteriori error estimates for hybrid high-order methods applied to biharmonic problems},
	author={Dong, Zhaonan and Ern, Alexandre and Wadhawan, Tanvi},
	journal={arXiv preprint arXiv:2602.06872},
	year={2026}
}

@article{dong2025hperroranalysismixedorderhybrid,
	title={$hp$-error analysis of mixed-order hybrid high-order methods for elliptic problems on simplicial meshes},
	author={Zhaonan Dong and Alexandre Ern},
	journal={Numer. Math.},
	year={2026},
	DOI = {10.1007/s00211-026-01555-1},
	URL = {https://doi.org/10.1007/s00211-026-01555-1},
}

@article {MR2519603,
	AUTHOR = {Wise, S. M. and Wang, C. and Lowengrub, J. S.},
	TITLE = {An energy-stable and convergent finite-difference scheme for
	the phase field crystal equation},
	JOURNAL = {SIAM J. Numer. Anal.},
	FJOURNAL = {SIAM Journal on Numerical Analysis},
	VOLUME = {47},
	YEAR = {2009},
	NUMBER = {3},
	PAGES = {2269--2288},
	ISSN = {0036-1429,1095-7170},
	MRCLASS = {65M06 (35B35 35G25 65M12 65M15 82D25)},
	MRNUMBER = {2519603},
	MRREVIEWER = {Laurent\ E.\ Gosse},
	DOI = {10.1137/080738143},
	URL = {https://doi.org/10.1137/080738143},
}

@article {MR3564350,
	AUTHOR = {Shin, Jaemin and Lee, Hyun Geun and Lee, June-Yub},
	TITLE = {First and second order numerical methods based on a new convex
	splitting for phase-field crystal equation},
	JOURNAL = {J. Comput. Phys.},
	FJOURNAL = {Journal of Computational Physics},
	VOLUME = {327},
	YEAR = {2016},
	PAGES = {519--542},
	ISSN = {0021-9991,1090-2716},
	MRCLASS = {65M06 (82D25)},
	MRNUMBER = {3564350},
	DOI = {10.1016/j.jcp.2016.09.053},
	URL = {https://doi.org/10.1016/j.jcp.2016.09.053},
}

@article {MR2028722,
	AUTHOR = {Barrett, John W. and Langdon, Stephen and N\"urnberg, Robert},
	TITLE = {Finite element approximation of a sixth order nonlinear
	degenerate parabolic equation},
	JOURNAL = {Numer. Math.},
	FJOURNAL = {Numerische Mathematik},
	VOLUME = {96},
	YEAR = {2004},
	NUMBER = {3},
	PAGES = {401--434},
	ISSN = {0029-599X,0945-3245},
	MRCLASS = {65M60 (35K35 35K55 35K65)},
	MRNUMBER = {2028722},
	MRREVIEWER = {Erwin\ Schechter},
	DOI = {10.1007/s00211-003-0479-4},
	URL = {https://doi.org/10.1007/s00211-003-0479-4},
}

@book {MR2667016,
	AUTHOR = {Gazzola, Filippo and Grunau, Hans-Christoph and Sweers, Guido},
	TITLE = {Polyharmonic boundary value problems},
	SERIES = {Lecture Notes in Mathematics},
	VOLUME = {1991},
	NOTE = {Positivity preserving and nonlinear higher order elliptic
	equations in bounded domains},
	PUBLISHER = {Springer-Verlag, Berlin},
	YEAR = {2010},
	PAGES = {xviii+423},
	ISBN = {978-3-642-12244-6},
	MRCLASS = {35-02 (31B30 35A08 35J40 35J61 46E35 58E12)},
	MRNUMBER = {2667016},
	MRREVIEWER = {Rodney\ Josu\'e\ Biezuner},
	DOI = {10.1007/978-3-642-12245-3},
	URL = {https://doi.org/10.1007/978-3-642-12245-3},
}

@book {MR1930132,
	AUTHOR = {Ciarlet, Philippe G.},
	TITLE = {The finite element method for elliptic problems},
	SERIES = {Classics in Applied Mathematics},
	VOLUME = {40},
	PUBLISHER = {Society for Industrial and Applied Mathematics (SIAM),
	Philadelphia, PA},
	YEAR = {2002},
	PAGES = {xxviii+530},
	ISBN = {0-89871-514-8},
	MRCLASS = {65N30 (65-02 65N15 74G15 74S05)},
	MRNUMBER = {1930132},
	DOI = {10.1137/1.9780898719208},
	URL = {https://doi.org/10.1137/1.9780898719208},
}

@article {MR1393904,
	AUTHOR = {D\"orfler, Willy},
	TITLE = {A convergent adaptive algorithm for {P}oisson's equation},
	JOURNAL = {SIAM J. Numer. Anal.},
	FJOURNAL = {SIAM Journal on Numerical Analysis},
	VOLUME = {33},
	YEAR = {1996},
	NUMBER = {3},
	PAGES = {1106--1124},
	ISSN = {0036-1429},
	MRCLASS = {65N50 (65N55)},
	MRNUMBER = {1393904},
	MRREVIEWER = {S.\ F.\ McCormick},
	DOI = {10.1137/0733054},
	URL = {https://doi.org/10.1137/0733054},
}

@article {MR3003062,
	AUTHOR = {Gomez, Hector and Nogueira, Xes\'us},
	TITLE = {An unconditionally energy-stable method for the phase field
	crystal equation},
	JOURNAL = {Comput. Methods Appl. Mech. Engrg.},
	FJOURNAL = {Computer Methods in Applied Mechanics and Engineering},
	VOLUME = {249/252},
	YEAR = {2012},
	PAGES = {52--61},
	ISSN = {0045-7825,1879-2138},
	MRCLASS = {65M60 (74N15)},
	MRNUMBER = {3003062},
	DOI = {10.1016/j.cma.2012.03.002},
	URL = {https://doi.org/10.1016/j.cma.2012.03.002},
}

@article {MR4842145,
	AUTHOR = {Diegel, Amanda E. and Bond, Daniel and Sharma, Natasha S.},
	TITLE = {Stability and error analysis for a {$\rm C^0$} interior
	penalty method for the modified phase field crystal equation},
	JOURNAL = {Matematica},
	FJOURNAL = {La Matematica. Official Journal of the Association for Women
	in Mathematics},
	VOLUME = {3},
	YEAR = {2024},
	NUMBER = {4},
	PAGES = {1426--1450},
	ISSN = {2730-9657},
	MRCLASS = {65M12 (35K35 65M60)},
	MRNUMBER = {4842145},
	DOI = {10.1007/s44007-024-00133-x},
	URL = {https://doi.org/10.1007/s44007-024-00133-x},
}

@article {MR3327707,
	AUTHOR = {Liu, Changchun},
	TITLE = {A sixth-order thin film equation in two space dimensions},
	JOURNAL = {Adv. Differential Equations},
	FJOURNAL = {Advances in Differential Equations},
	VOLUME = {20},
	YEAR = {2015},
	NUMBER = {5-6},
	PAGES = {557--580},
	ISSN = {1079-9389},
	MRCLASS = {35K55 (35D30 35K35 35K65 76A20)},
	MRNUMBER = {3327707},
	MRREVIEWER = {Daniel\ Matthes},
	URL = {http://projecteuclid.org/euclid.ade/1427744016},
}

@article {MR2807583,
	AUTHOR = {Verhoosel, Clemens V. and Scott, Michael A. and Hughes, Thomas
	J. R. and de Borst, Ren\'e},
	TITLE = {An isogeometric analysis approach to gradient damage models},
	JOURNAL = {Internat. J. Numer. Methods Engrg.},
	FJOURNAL = {International Journal for Numerical Methods in Engineering},
	VOLUME = {86},
	YEAR = {2011},
	NUMBER = {1},
	PAGES = {115--134},
	ISSN = {0029-5981,1097-0207},
	MRCLASS = {65N30 (74R10 74S05)},
	MRNUMBER = {2807583},
	DOI = {10.1002/nme.3150},
	URL = {https://doi.org/10.1002/nme.3150},
}

@article{liu2007general,
	title={A general sixth order geometric partial differential equation and its application in surface modeling},
	author={Liu, Dan and Xu, Guoliang},
	journal={Journal of Information and Computational Science},
	volume={4},
	number={1},
	pages={129--140},
	year={2007},
	publisher={Citeseer}
}

@article {MR1726480,
	AUTHOR = {Verf\"urth, R\"udiger},
	TITLE = {Error estimates for some quasi-interpolation operators},
	JOURNAL = {M2AN Math. Model. Numer. Anal.},
	FJOURNAL = {M2AN. Mathematical Modelling and Numerical Analysis},
	VOLUME = {33},
	YEAR = {1999},
	NUMBER = {4},
	PAGES = {695--713},
	ISSN = {0764-583X,1290-3841},
	MRCLASS = {65N30 (41A35 65D05 65N15)},
	MRNUMBER = {1726480},
	MRREVIEWER = {Jacques\ Rappaz},
	DOI = {10.1051/m2an:1999158},
	URL = {https://doi.org/10.1051/m2an:1999158},
}

@article {MR4974657,
	AUTHOR = {Li, Hengguang and Yin, Peimeng},
	TITLE = {A {$C^0$} finite element algorithm for the sixth order problem
	with simply supported boundary conditions},
	JOURNAL = {Numer. Methods Partial Differential Equations},
	FJOURNAL = {Numerical Methods for Partial Differential Equations. An
	International Journal},
	VOLUME = {41},
	YEAR = {2025},
	NUMBER = {6},
	PAGES = {Paper No. e70048, 22},
	ISSN = {0749-159X,1098-2426},
	MRCLASS = {65N12 (35J40 65N30)},
	MRNUMBER = {4974657},
	DOI = {10.1002/num.70048},
	URL = {https://doi.org/10.1002/num.70048},
}

@article {MR3283758,
	AUTHOR = {Di Pietro, Daniele A. and Ern, Alexandre},
	TITLE = {A hybrid high-order locking-free method for linear elasticity
	on general meshes},
	JOURNAL = {Comput. Methods Appl. Mech. Engrg.},
	FJOURNAL = {Computer Methods in Applied Mechanics and Engineering},
	VOLUME = {283},
	YEAR = {2015},
	PAGES = {1--21},
	ISSN = {0045-7825,1879-2138},
	MRCLASS = {65N30 (74B05 74G15)},
	MRNUMBER = {3283758},
	MRREVIEWER = {Alexandre\ L.\ Madureira},
	DOI = {10.1016/j.cma.2014.09.009},
	URL = {https://doi.org/10.1016/j.cma.2014.09.009},
}

@article {Walk14,
	AUTHOR = {Walkington, Noel J.},
	TITLE = {{A {$\rm C^1$} tetrahedral finite element without edge degrees of
	freedom}},
	JOURNAL = {SIAM J. Numer. Anal.},
	FJOURNAL = {SIAM Journal on Numerical Analysis},
	VOLUME = {52},
	YEAR = {2014},
	NUMBER = {1},
	PAGES = {330--342},
	ISSN = {0036-1429,1095-7170},
	MRCLASS = {65N30},
	MRNUMBER = {3163246},
	MRREVIEWER = {Gerard\ Awanou},
	optDOI = {10.1137/130912013},
	optURL = {https://doi.org/10.1137/130912013},
}

@article {MR4044442,
	AUTHOR = {Ern, Alexandre and Vohral\'ik, Martin},
	TITLE = {Stable broken {$H^1$} and {$H({\rm div})$} polynomial
	extensions for polynomial-degree-robust potential and flux
	reconstruction in three space dimensions},
	JOURNAL = {Math. Comp.},
	FJOURNAL = {Mathematics of Computation},
	VOLUME = {89},
	YEAR = {2020},
	NUMBER = {322},
	PAGES = {551--594},
	ISSN = {0025-5718,1088-6842},
	MRCLASS = {65N15 (65N30 76M10)},
	MRNUMBER = {4044442},
	MRREVIEWER = {Riccardo\ Sacco},
	DOI = {10.1090/mcom/3482},
	URL = {https://doi.org/10.1090/mcom/3482},
}

@article{dassi2022virtual,
	title={A virtual element method on polyhedral meshes for the sixth-order elliptic problem},
	author={Dassi, Franco and Mora, David and Reales, Carlos and Vel{\`a}squez, Iv{\`a}n},
	journal={arXiv preprint arXiv:2211.07953},
	year={2022}
}

@article {MR4496380,
	AUTHOR = {Causil, Jos\'e{} and Reales, Carlos and Vel\'asquez, Iv\'an},
	TITLE = {A {$C^1$}-{$C^0$} virtual element discretization for a
	sixth-order elliptic equation},
	JOURNAL = {Calcolo},
	FJOURNAL = {Calcolo. A Quarterly on Numerical Analysis and Theory of
	Computation},
	VOLUME = {59},
	YEAR = {2022},
	NUMBER = {4},
	PAGES = {Paper No. 39, 27},
	ISSN = {0008-0624,1126-5434},
	MRCLASS = {65N30 (35J58 65N15)},
	MRNUMBER = {4496380},
	MRREVIEWER = {Yadong\ Zhang},
	DOI = {10.1007/s10092-022-00482-5},
	URL = {https://doi.org/10.1007/s10092-022-00482-5},
}

@article {MR4439881,
	AUTHOR = {Botti, Lorenzo and Massa, Francesco Carlo},
	TITLE = {H{HO} methods for the incompressible {N}avier-{S}tokes and the
	incompressible {E}uler equations},
	JOURNAL = {J. Sci. Comput.},
	FJOURNAL = {Journal of Scientific Computing},
	VOLUME = {92},
	YEAR = {2022},
	NUMBER = {1},
	PAGES = {Paper No. 28, 38},
	ISSN = {0885-7474,1573-7691},
	MRCLASS = {65M60 (65M12 76B47 76D05)},
	MRNUMBER = {4439881},
	DOI = {10.1007/s10915-022-01864-1},
	URL = {https://doi.org/10.1007/s10915-022-01864-1},
}

@article {MR3767823,
	AUTHOR = {Di Pietro, Daniele A. and Krell, Stella},
	TITLE = {A hybrid high-order method for the steady incompressible
	{N}avier-{S}tokes problem},
	JOURNAL = {J. Sci. Comput.},
	FJOURNAL = {Journal of Scientific Computing},
	VOLUME = {74},
	YEAR = {2018},
	NUMBER = {3},
	PAGES = {1677--1705},
	ISSN = {0885-7474,1573-7691},
	MRCLASS = {65N30 (35Q30 65N12 76D05 76M10)},
	MRNUMBER = {3767823},
	MRREVIEWER = {Long\ An\ Ying},
	DOI = {10.1007/s10915-017-0512-x},
	URL = {https://doi.org/10.1007/s10915-017-0512-x},
}

@article {MR3784906,
	AUTHOR = {Chave, Florent and Di Pietro, Daniele A. and Formaggia, Luca},
	TITLE = {A hybrid high-order method for {D}arcy flows in fractured
	porous media},
	JOURNAL = {SIAM J. Sci. Comput.},
	FJOURNAL = {SIAM Journal on Scientific Computing},
	VOLUME = {40},
	YEAR = {2018},
	NUMBER = {2},
	PAGES = {A1063--A1094},
	ISSN = {1064-8275,1095-7197},
	MRCLASS = {65N08 (65N30 76S05)},
	MRNUMBER = {3784906},
	MRREVIEWER = {Roberta\ De Luca},
	DOI = {10.1137/17M1119500},
	URL = {https://doi.org/10.1137/17M1119500},
}

@article {MR4982456,
	AUTHOR = {P\'erez, Luis and Reales, Carlos and Silgado, Alberth and
	Vel\'asquez, Iv\'an},
	TITLE = {{$C^1-C^0$} conforming virtual element approximations for
	sixth-order problems},
	JOURNAL = {J. Comput. Phys.},
	FJOURNAL = {Journal of Computational Physics},
	VOLUME = {545},
	YEAR = {2026},
	PAGES = {Paper No. 114483, 17},
	ISSN = {0021-9991,1090-2716},
	MRCLASS = {65M60 (35J58 65M15)},
	MRNUMBER = {4982456},
	DOI = {10.1016/j.jcp.2025.114483},
	URL = {https://doi.org/10.1016/j.jcp.2025.114483},
}

@article {MR3834430,
	AUTHOR = {Bonaldi, Francesco and Di Pietro, Daniele A. and Geymonat,
	Giuseppe and Krasucki, Fran\c coise},
	TITLE = {A hybrid high-order method for {K}irchhoff-{L}ove plate
	bending problems},
	JOURNAL = {ESAIM Math. Model. Numer. Anal.},
	FJOURNAL = {ESAIM. Mathematical Modelling and Numerical Analysis},
	VOLUME = {52},
	YEAR = {2018},
	NUMBER = {2},
	PAGES = {393--421},
	ISSN = {2822-7840,2804-7214},
	MRCLASS = {65N30 (65N12 74K20)},
	MRNUMBER = {3834430},
	MRREVIEWER = {Gerhard\ Starke},
	DOI = {10.1051/m2an/2017065},
	URL = {https://doi.org/10.1051/m2an/2017065},
}

@article {MR3504993,
	AUTHOR = {Boffi, Daniele and Botti, Michele and Di Pietro, Daniele A.},
	TITLE = {A nonconforming high-order method for the {B}iot problem on
	general meshes},
	JOURNAL = {SIAM J. Sci. Comput.},
	FJOURNAL = {SIAM Journal on Scientific Computing},
	VOLUME = {38},
	YEAR = {2016},
	NUMBER = {3},
	PAGES = {A1508--A1537},
	ISSN = {1064-8275,1095-7197},
	MRCLASS = {65N30 (65N08 76M10 76S05)},
	MRNUMBER = {3504993},
	MRREVIEWER = {Gerhard\ Starke},
	DOI = {10.1137/15M1025505},
	URL = {https://doi.org/10.1137/15M1025505},
}

@article {MR4699573,
	AUTHOR = {Dong, Zhaonan and Ern, Alexandre},
	TITLE = {{$C^0$}-hybrid high-order methods for biharmonic problems},
	JOURNAL = {IMA J. Numer. Anal.},
	FJOURNAL = {IMA Journal of Numerical Analysis},
	VOLUME = {44},
	YEAR = {2024},
	NUMBER = {1},
	PAGES = {24--57},
	ISSN = {0272-4979,1464-3642},
	MRCLASS = {65N30 (65N12)},
	MRNUMBER = {4699573},
	DOI = {10.1093/imanum/drad003},
	URL = {https://doi.org/10.1093/imanum/drad003},
}

@article {MR4779761,
	AUTHOR = {Antonietti, P. F. and Matalon, P. and Verani, M.},
	TITLE = {Iterative solution to the biharmonic equation in mixed form
	discretized by the hybrid high-order method},
	JOURNAL = {Comput. Math. Appl.},
	FJOURNAL = {Computers \& Mathematics with Applications. An International
	Journal},
	VOLUME = {171},
	YEAR = {2024},
	PAGES = {154--163},
	ISSN = {0898-1221,1873-7668},
	MRCLASS = {65N30 (35J58 35J91)},
	MRNUMBER = {4779761},
	DOI = {10.1016/j.camwa.2024.07.018},
	URL = {https://doi.org/10.1016/j.camwa.2024.07.018},
}

@article {MR3259024,
	AUTHOR = {Di Pietro, Daniele A. and Ern, Alexandre and Lemaire, Simon},
	TITLE = {An arbitrary-order and compact-stencil discretization of
	diffusion on general meshes based on local reconstruction
	operators},
	JOURNAL = {Comput. Methods Appl. Math.},
	FJOURNAL = {Computational Methods in Applied Mathematics},
	VOLUME = {14},
	YEAR = {2014},
	NUMBER = {4},
	PAGES = {461--472},
	ISSN = {1609-4840,1609-9389},
	MRCLASS = {65N08 (65N12)},
	MRNUMBER = {3259024},
	MRREVIEWER = {Andrei\ I.\ Tolstykh},
	DOI = {10.1515/cmam-2014-0018},
	URL = {https://doi.org/10.1515/cmam-2014-0018},
}

@article {MR4485999,
	AUTHOR = {Dong, Zhaonan and Ern, Alexandre},
	TITLE = {Hybrid high-order and weak {G}alerkin methods for the
	biharmonic problem},
	JOURNAL = {SIAM J. Numer. Anal.},
	FJOURNAL = {SIAM Journal on Numerical Analysis},
	VOLUME = {60},
	YEAR = {2022},
	NUMBER = {5},
	PAGES = {2626--2656},
	ISSN = {0036-1429,1095-7170},
	MRCLASS = {65N15 (65N30 74K20)},
	MRNUMBER = {4485999},
	DOI = {10.1137/21M1408555},
	URL = {https://doi.org/10.1137/21M1408555},
}

@article {MR4355427,
	AUTHOR = {Dong, Zhaonan and Ern, Alexandre},
	TITLE = {Hybrid high-order method for singularly perturbed fourth-order
	problems on curved domains},
	JOURNAL = {ESAIM Math. Model. Numer. Anal.},
	FJOURNAL = {ESAIM. Mathematical Modelling and Numerical Analysis},
	VOLUME = {55},
	YEAR = {2021},
	NUMBER = {6},
	PAGES = {3091--3114},
	ISSN = {2822-7840,2804-7214},
	MRCLASS = {65N30 (65N15 74K20)},
	MRNUMBER = {4355427},
	DOI = {10.1051/m2an/2021081},
	URL = {https://doi.org/10.1051/m2an/2021081},
}

@book {MR4230986,
    AUTHOR = {Di Pietro, Daniele Antonio and Droniou, J\'er\^ome},
     TITLE = {The hybrid high-order method for polytopal meshes},
    SERIES = {Modeling, Simulation and Applications},
    VOLUME = {19},
      NOTE = {Design, analysis, and applications},
 PUBLISHER = {Springer, Cham},
      YEAR = {2020},
     PAGES = {xxxi+525},
      ISBN = {978-3-030-37202-6; 978-3-030-37203-3},
   MRCLASS = {65-02 (74-01 76-01 78Mxx)},
  MRNUMBER = {4230986},
       DOI = {10.1007/978-3-030-37203-3},
       URL = {https://doi.org/10.1007/978-3-030-37203-3},
}

@book {MR2882148,
	AUTHOR = {Di Pietro, Daniele Antonio and Ern, Alexandre},
	TITLE = {Mathematical aspects of discontinuous {G}alerkin methods},
	SERIES = {Math\'ematiques \& Applications (Berlin) [Mathematics \&
	Applications]},
	VOLUME = {69},
	PUBLISHER = {Springer, Heidelberg},
	YEAR = {2012},
	PAGES = {xviii+384},
	ISBN = {978-3-642-22979-4},
	MRCLASS = {65-02 (35A35 35F15 35J25 35Q35 65M60 65N30)},
	MRNUMBER = {2882148},
	MRREVIEWER = {R\'emi\ Vaillancourt},
	DOI = {10.1007/978-3-642-22980-0},
	URL = {https://doi.org/10.1007/978-3-642-22980-0},
}

@article {MR1974504,
	AUTHOR = {Brenner, Susanne C.},
	TITLE = {Poincar\'e-{F}riedrichs inequalities for piecewise {$H^1$}
	functions},
	JOURNAL = {SIAM J. Numer. Anal.},
	FJOURNAL = {SIAM Journal on Numerical Analysis},
	VOLUME = {41},
	YEAR = {2003},
	NUMBER = {1},
	PAGES = {306--324},
	ISSN = {0036-1429,1095-7170},
	MRCLASS = {65N30 (46E35)},
	MRNUMBER = {1974504},
	DOI = {10.1137/S0036142902401311},
	URL = {https://doi.org/10.1137/S0036142902401311},
}
\end{document}